\newtheorem{thm}{Theorem}[section]
\newtheorem{prop}[thm]{Proposition}
\newtheorem{lem}[thm]{Lemma}
\newenvironment{rem}
{ %
\refstepcounter{thm} %
\noindent \textit{Remark \thesection.\arabic{thm}.}%
}{}
\renewcommand{\le}{\leqslant}
\renewcommand{\ge}{\geqslant}
\renewcommand{\subset}{\subseteq}
\newcommand{\mcl}{\mathcal}
\newcommand{\E}{\mathbb{E}}
\newcommand{\EE}{\mathbf{E}}
\newcommand{\N}{\mathbb{N}}
\newcommand{\1}{\mathbf{1}}
\newcommand{\R}{\mathbb{R}}
\newcommand{\Q}{\mathbb{Q}}
\newcommand{\Z}{\mathbb{Z}}
\renewcommand{\P}{\mathbb{P}}
\newcommand{\PP}{\mathbf{P}}
\newcommand{\ov}{\overline}
\newcommand{\td}{\tilde}
\newcommand{\eps}{\varepsilon}
\def\d{{\mathrm{d}}}
\newcommand{\mclC}{\mathcal{C}}
\newcommand{\ovmclC}{\ov{\mathcal{C}}}
\newcommand{\var}{\mathbb{V}\mathrm{ar}}
\newcommand{\mfk}{\mathfrak}
\renewcommand{\ll}{\left}
\newcommand{\rr}{\right}
\renewcommand{\phi}{\varphi}
\newcommand{\CC}{\texttt{CC}}
\newcommand{\hata}{\hat{a}}
\newcommand{\hatA}{\hat{A}}
\newcommand{\e}{\texttt{e}}
\renewcommand{\emptyset}{\varnothing}
\DeclareMathOperator{\m}{\mathsf{m}}
\title[Lyapunov exponents, shape theorems and large deviations]{Lyapunov exponents, shape theorems and large deviations for the random walk in random potential}
\author{Jean-Christophe Mourrat}
\address{Ecole polytechnique fédérale de Lausanne, institut de mathématiques, station 8, 1015 Lausanne, Switzerland.} 
\begin{document}
\begin{abstract}
We consider the simple random walk on $\Z^d$ evolving in a potential of independent and identically distributed random variables taking values in $[0,+\infty]$. We give optimal conditions for the existence of the quenched point-to-point Lyapunov exponent, and for different versions of a shape theorem. The method of proof applies as well to first-passage percolation, and builds up on an approach of \cite{cd}. The weakest form of shape theorem holds whenever the set of sites with finite potential percolates. Under this condition, we then show the existence of the quenched point-to-hyperplane Lyapunov exponent, and give a large deviation principle for the walk under the quenched weighted measure. 

\bigskip

\noindent \textsc{MSC 2010:} 60K37, 60F10, 60F15.

\medskip

\noindent \textsc{Keywords:} random walk in random potential, first-passage percolation, Lyapunov exponents, shape theorem, large deviations.

\end{abstract}
\maketitle
\tableofcontents
%
%
%
%
%
%
%
%
\section{Introduction}
\label{s:intro}
\setcounter{equation}{0}

Let $(S_n)_{n \in \N}$ be the simple random walk on $\Z^d$, $d \ge 2$. We write $\PP_x$ for the law of the random walk starting from position $x$, and $\EE_x$ for the associated expectation. Independently of $S$, we give ourselves a family $(V(x))_{x \in \Z^d}$ of independent and identically distributed random variables with values in $[0,+\infty]$, whose law we write $\P$ (and associated expectation $\E$), and that we call the \emph{potential}. To avoid degeneracy, we assume that $V(0)$ is not almost surely equal to $0$.

For $y \in \Z^d$, let us write $H_y$ for the entrance time of the walk at site $y$:
\begin{equation}
\label{defHy}
H_y = \inf \{ n \ge 0 : S_n = y \}.
\end{equation}
For any $x,y \in \Z^d$, we define
$$
e(x,y) = \EE_x\left[ \exp\left( -\sum_{n = 0}^{H_y - 1}  V(S_n) \rr) \ \1_{\{H_y < +\infty\}} \right] \qquad (e(x,y) = 1 \text{ if } x = y).
$$
The first question we want to address concerns the behaviour of $e(0,y)$ as $y$ tends to infinity. The question may be asked about the mean value of $e(0,y)$ (the \emph{annealed} situation), or about the typical value of $e(0,y)$ (the \emph{quenched} situation). The annealed behaviour has been successfully addressed by \cite{flu1} under the broadest possible assumption on the distribution of $V(0)$ --- that is, no assumption at all. From now on, we therefore focus on the quenched setting.
%
%
%
We will always assume that
\begin{itemize}
\item[(H)] $\P[V(0) < +\infty] > p_c$, where $p_c$ is the critical probability of site percolation in $\Z^d$.
\end{itemize}
This assumption guarantees the existence of an infinite connected component of sites $x$ with $V(x) < +\infty$, which is clearly a necessary condition to ensure that unbounded trajectories of the random walk really contribute to $e(x,y)$ for a fixed potential. We denote this infinite connected component by $\mathbf{C}_\infty$.

Let us define
\begin{equation}
\label{defa}
a(x,y) = - \log e(x,y).	
\end{equation}
The quantity $a(x,y)$ can be thought of as measuring the cost of travelling from $x$ to $y$ for the random walk in the potential $V$, and is analogous to the travel time of first-passage percolation. For $x,y \in \Z^d$, we write $x\sim y$ if $\|y-x\|_2 = 1$ (where $\|\cdot \|_2$ is the Euclidian norm), and define
\begin{equation}
\label{defZ}
Z(x) = \min_{y \sim x} V(y).
\end{equation}
\begin{thm}[Point-to-point Lyapunov exponent]
\label{t:existpoint}	
Let $x \in \Z^d \setminus \{0\}$. As $n$ tends to infinity, the quantity
\begin{equation}
\label{eq:limpoint}
\frac{1}{n} a(0,nx)
\end{equation}
\begin{enumerate}
\item converges in $L^1$ if and only if $\E[V(0)] < +\infty$ ;
\item converges almost surely if and only if $\E[Z(0)] < +\infty$ ;
\item converges in probability if and only if $V(0) < + \infty$ a.s.
\end{enumerate}
In any of these cases, the limit $\alpha(x)$ is deterministic, and $\alpha$ can be extended into a norm on $\R^d$. When $V(0) < + \infty$ a.s., we also have
$$
\liminf_{n \to + \infty}  \frac{1}{n} a(0,nx) = \alpha(x) \quad \text{ almost surely}.
$$
\end{thm}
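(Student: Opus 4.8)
The plan is to prove Theorem~\ref{t:existpoint} by the subadditive ergodic theorem, exploiting a near-subadditivity of $(x,y)\mapsto a(x,y)$, and then to identify precisely which moment conditions make each mode of convergence work.

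\medskip

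\noindent\textbf{Step 1: (Near-)subadditivity and the choice of a comparison quantity.}
First I would record the basic inequality $e(x,z)\ge e(x,y)\,e(y,z)$ for all $x,y,z\in\Z^d$, which follows by restricting the random walk from $x$ to $z$ to trajectories passing through $y$ and using the strong Markov property at $H_y$ (plus independence of $V$ from the walk). This gives
\[
a(x,z)\le a(x,y)+a(y,z),
\]
so $a$ is genuinely subadditive along any path of intermediate points. The difficulty is that $a(0,nx)$ need not have finite expectation --- indeed $a(0,x)$ is infinite on the event that every neighbour of $0$ (or of $x$) has infinite potential. To apply Kingman's subadditive ergodic theorem one wants an integrable dominating object; the natural one is $a$ restricted so that endpoints sit on $\mathbf{C}_\infty$, or a slightly regularized travel cost $\td a(x,y)$ obtained by forcing the walk to move first to a nearby good site. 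This is where the quantities $Z(x)=\min_{y\sim x}V(y)$ enter: one has a bound of the shape $a(0,x)\le Z(0)+C$ (go from $0$ to its cheapest neighbour and then let the walk wander), and more usefully $a(0,nx)\le \sum$ of $Z$-type costs along a deterministic lattice path from $0$ to $nx$, which is a sum of i.i.d.\ variables with mean proportional to $n\,\E[Z(0)]$.

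\medskip

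\noindent\textbf{Step 2: Existence of the limit under the relevant moment condition.}
Granting an integrable subadditive cocycle, Kingman's theorem yields an a.s.\ and $L^1$ limit $\alpha(x)=\lim_n \frac1n a(0,nx)$, deterministic by ergodicity of the i.i.d.\ field under lattice translations. For part~(2), the hypothesis $\E[Z(0)]<\infty$ is exactly what makes the dominating sum $\sum Z$ integrable, so Kingman applies and gives a.s.\ convergence; conversely, if $\E[Z(0)]=+\infty$ one expects $\frac1n a(0,nx)$ to fail to converge a.s.\ because along the path from $0$ to $nx$ one meets, infinitely often in $n$, a site whose entire neighbourhood is expensive, forcing $a$ to be large on a positive-density set of scales --- a Borel--Cantelli argument on the tail of $Z$ gives the divergence. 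For part~(1), $L^1$ convergence needs uniform integrability of $\frac1n a(0,nx)$; the relevant moment is $\E[V(0)]$ rather than $\E[Z(0)]$ because in the annealed/$L^1$ average the walk can sit on a single expensive site, so the cost picks up $V$ directly, and $\E[V(0)]<\infty$ is both sufficient (explicit upper bound on $\E[a(0,x)]$ by staying put an appropriate number of steps, then using a path) and necessary. For part~(3), convergence in probability: when $V(0)<\infty$ a.s.\ the set $\{x:V(x)<\infty\}$ is all of $\Z^d$, there is no percolation obstruction, and one shows $\frac1n a(0,nx)$ is tight and Cauchy in probability by a sub-/super-additive sandwiching combined with the fact that $\frac1n a(0,nx)$ cannot be too large with high probability (a single bad neighbourhood has probability $o(1)$ of lying on the path, so its effect is negligible in probability but not a.s.); conversely if $\P[V(0)=+\infty]>0$ but still $>p_c$ for finiteness, the subcritical dual argument shows a macroscopic detour is needed with non-vanishing probability fluctuation, breaking convergence in probability.

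\medskip

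\noindent\textbf{Step 3: $\alpha$ is a norm, and the $\liminf$ statement.}
Homogeneity $\alpha(kx)=k\alpha(x)$ for $k\in\N$ is immediate from the definition of the limit along $(nx)_n$; combined with subadditivity $\alpha(x+y)\le\alpha(x)+\alpha(y)$ (from $a(0,x+y)\le a(0,x)+a(x,x+y)$ and translation invariance in the limit) and symmetry, one extends $\alpha$ first to $\Q^d$ by homogeneity and then to $\R^d$ by the Lipschitz bound $|\alpha(x)-\alpha(y)|\le C\|x-y\|_1$ coming from Step~1's linear upper bound; positivity $\alpha(x)>0$ for $x\ne0$ uses that $V(0)$ is not a.s.\ $0$, so a deterministic positive fraction of visited sites carries positive potential and a standard concentration/large-deviation estimate gives a linear lower bound on $a(0,nx)$. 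Finally, the almost-sure $\liminf$ identity when $V(0)<\infty$ a.s.: here convergence in probability (part~(3)) already forces a subsequence converging a.s.\ to $\alpha(x)$, so $\liminf_n \frac1n a(0,nx)\le\alpha(x)$ a.s.; the reverse inequality $\liminf_n\frac1n a(0,nx)\ge\alpha(x)$ a.s.\ is the substantive point and I would get it from a superadditive-type lower bound --- the key being that although $a(0,nx)$ may spike upward at bad scales, it cannot dip below the ``resistance'' accumulated by the potential along \emph{any} trajectory, and a.s.\ along a suitable subsequence the walk can realize the near-optimal cost, so $\frac1n a(0,nx)$ returns to $\alpha(x)$ infinitely often.

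\medskip

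\noindent I expect the main obstacle to be Step~2's converse directions and the $L^1$-versus-a.s.\ dichotomy: pinning down that $\E[V(0)]<\infty$ (not $\E[Z(0)]<\infty$) is exactly the threshold for $L^1$, and that the a.s.\ threshold is genuinely $\E[Z(0)]<\infty$, requires carefully separating the ``sit on one expensive site'' cost (governed by $V$) from the ``pass through a site all of whose neighbours are expensive'' cost (governed by $Z$), and then proving sharp Borel--Cantelli-type divergence statements when those moments fail. The $\liminf$ lower bound in Step~3 is also delicate because it is a statement about the \emph{best} scales, requiring a second-moment or subsequence argument rather than a direct application of Kingman.
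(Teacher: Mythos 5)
Your overall architecture (subadditivity, Kingman applied to a regularized cost, then a two-sided comparison) is the right one, but two of your key quantitative claims are false and the hardest part of the theorem is left without a mechanism. First, the bound ``$a(0,x)\le Z(0)+C$'' cannot hold: the sum defining $e(0,x)$ starts at $n=0$, so $a(0,x)\ge V(0)$ always, and $V(0)$ need not be integrable when $\E[Z(0)]<\infty$. Likewise, forcing the walk along a \emph{single} deterministic path from $0$ to $nx$ produces $\sum_{z\in\gamma}V(z)$, a sum of $V$-type (not $Z$-type) costs, which again need not be integrable under $\E[Z(0)]<\infty$. The paper's device for part (2) is to move \emph{both} endpoints to their cheapest neighbours, $a_m(x,y)=a(\m(x),\m(y))$, and --- crucially --- to bound $a_m$ by a \emph{minimum over $2d$ site-disjoint paths} (Lemmas~\ref{2dpaths} and~\ref{integrpath}): it is the independence of the $2d$ path-sums that converts the tail of $V(0)$ into the tail of $Z(0)=\min_{y\sim 0}V(y)$ and makes $a_m$ integrable exactly under $\E[Z(0)]<\infty$. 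One then also needs the two-sided comparison of Proposition~\ref{approxam}, whose error terms $Z(nx)$ and $u_m(nx)$ are identically distributed and integrable, so that a Borel--Cantelli argument transfers the a.s.\ limit from $a_m$ back to $a$. Your proposal contains none of the disjoint-path idea, without which the ``integrable subadditive cocycle'' you grant yourself does not exist.

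Second, for part (3) you assert a ``sub-/super-additive sandwiching'' but never construct the sandwiching object, and this is where the real work lies: one needs an approximation $\hata$ of $a$ that is subadditive up to corrections and \emph{integrable with no moment assumption on $V$ at all} (only (H)). The paper achieves this through the renormalization of Sections~\ref{s:geom}--\ref{s:ptpnomoment}: good boxes with crossing clusters, the sets $\Delta^g(x)$, and stretched-exponential tail bounds on $|\ovmclC_i|$ coming from percolation estimates, which make the correction terms $v(y)$ integrable. The comparison $\hata(0,nx)\le a(0,nx)\le\hata(0,nx)+u(0)+u(nx)$ with $u(nx)$ identically distributed and a.s.\ finite (but possibly non-integrable) is then exactly what yields convergence in probability and, as a byproduct, both inequalities of the $\liminf$ identity (the lower bound is immediate from $a\ge\hata$, not from a ``superadditive'' argument). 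Finally, the converse of part (3) is far simpler than your ``subcritical dual'' sketch: if $\P[V(0)=+\infty]>0$ then $a(0,nx)\ge V(0)=+\infty$ with fixed positive probability for every $n$, which already rules out convergence in probability to a finite constant.
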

In \cite[Proposition~4]{zer}, it is proved that $\E[V(0)] < +\infty$ is a sufficient condition for $L^1$ and almost sure convergence. It is easy to check that the integrability of $V(0)$ is also necessary for $L^1$ convergence, as $a(0,x) \ge V(0)$ for any $x \neq 0$, so part (1) is in fact already known. 

Let us discuss part (3) of the theorem. For two-dimensional first-passage percolation, \cite{cd} obtained similar results using a strategy that we can informally describe in our present context. Assuming that $V(0) < + \infty$ a.s.,\ we can choose $M$ very large so that there exists an infinite connected component $\mathbf{C}'_\infty$ of sites $x$ such that $V(x) \le M$, with only very small ``holes'', i.e.\ such that the complement of $\mathbf{C}'_\infty$ has only small connected components. Then, we encapsulate each site $x$ within a small set $\mathbf{C}_x$ such that any point on the boundary of $\mathbf{C}_x$ lies in $\mathbf{C}'_\infty$, and define an approximate cost function $\hata(x,y)$, that measures the minimal cost for a travel from a point of the boundary of $\mathbf{C}_x$ to the set $\mathbf{C}_y$. With some care, one can apply the subadditive ergodic theorem to $\hata(x,y)$, and then justify that as far as convergence in probability of (\ref{eq:limpoint}) is concerned, $\hata(x,y)$ is indeed a good approximation of $a(x,y)$.

We can list the main differences between \cite{cd} and our present context.
\begin{enumerate}
\item Here, the ``travel time'' $a(x,y)$ is defined as a weighted average over the measure of the simple random walk, 
\item the randomness is attached to sites of $\Z^d$ and not to edges,
\item we do not restrict ourselves to $d = 2$.
\end{enumerate}

Difference (1) does not have any true influence over the methods employed either by \cite{cd} or here. Difference (2) introduces some difficulty when proving part (2) of Theorem~\ref{t:existpoint}, forcing us to introduce a second approximation of $a(x,y)$, but the argument is not different in nature. Some care is also needed to adapt contour-type arguments to higher-dimensional situations, but this problem was already addressed in \cite{kes} for first-passage percolation.

After these adaptations, the strategy works fine. We want however to pursue our investigation to shape theorems, existence of point-to-hyperplane exponents, and large deviation principle. They could be derived using the strategy we just sketched, but the minimal requirement for these results should no longer be that $V(0) < + \infty$ a.s., but instead assumption (H). For this reason, although it is not necessary for the purpose of proving Theorem~\ref{t:existpoint}, we want to build an approximation of $a(x,y)$ that is efficient in the full range of this assumption. The most important difference between what we will do and \cite{cd} is thus that
\begin{enumerate}
\item[(4)] we allow the potentials to take the value $+ \infty$.
\end{enumerate}
The strategy sketched rests on the fact that one can define an efficient approximation of $a(x,y)$ provided one can find $M$ large enough so that $\P[V(0) > M]$ is smaller than some strictly positive constant. It would thus be a method robust enough to handle the case when $\P[V(0) =+\infty]$ is sufficiently small. 

In order to cover the full range of assumption (H), we will rely on a renormalization procedure, that in a sense enables to push the probability of appearance of problematic points (those with $V(x) > M$) 
as close as we wish to $0$. We can then define the approximate $\hata(x,y)$ in the renormalized scale in the spirit of the above informal description. We will show that there exists a norm $\alpha$ on $\R^d$ such that under (H) only, the function $\hata$ thus constructed satisfies, for any $x \in \Z^d$,
\begin{equation}
\label{defalpha}
\frac{1}{n} \hata(0,nx) \xrightarrow[n \to +\infty]{\text{a.s.}} \alpha(x).	
\end{equation}
We call $\alpha$ the \emph{Lyapunov norm} associated with the potential $V$. We obtain part (3) of Theorem~\ref{t:existpoint} by showing that when $V(0) < + \infty$ a.s.,\ $\hata$ is sufficiently good an approximation of $a$  for the convergence in probability of (\ref{eq:limpoint}) to be preserved.

Now equipped with this robust approximation of $a(x,y)$, we can go further and describe the limit shape of the set
\begin{equation}
\label{defAt}
A_t = \{x \in \Z^d : a(0,x) \le t \}.
\end{equation}
We write
\begin{equation}
\label{defAtcirc}
A_t^\circ = A_t + [0,1)^d.
\end{equation}
\begin{thm}[Shape theorems]
\label{t:shape}
Let $\alpha$ be the norm on $\R^d$ such that (\ref{defalpha}) holds for any $x \in \Z^d$, and 
\begin{equation}
\label{defK}
K = \{ x \in \R^d : \alpha(x) \le 1 \}.	
\end{equation}
\begin{enumerate}
\item One has
\begin{equation}
\label{eq:shapemoment}
\forall \eps > 0, \quad \P[(1-\eps)K \subset t^{-1} A_t^\circ \subset (1+\eps)K \text{ for all } t \text{ large enough} ]=  1	
\end{equation}
if and only if $\E[Z(0)^d] < + \infty$.
\item One has
\begin{equation}
\label{shapemeas}
| (t^{-1} A_t^\circ) \ \triangle \ K | \xrightarrow[t \to + \infty]{\text{a.s.}} 0
\end{equation}
if and only if $V(0) < + \infty$ a.s., where in (\ref{shapemeas}), $|\cdot|$ denotes the Lebesgue measure and $\triangle$ the symmetric difference.
\item Under assumption (H) only, with probability one, 
\begin{equation}
\label{eq:pth1}
\forall \eps > 0, \quad t^{-1} A_t \subset (1+\eps)K \text{ for all } t \text{ large enough},
\end{equation}
and moreover, for any function $(\texttt{e}_t)_{t \ge 0}$ such that $\lim_{t \to \infty} \e_t = + \infty$  and $\lim_{t \to \infty} \e_t/t = 0$, on the event $0 \in \mathbf{C}_\infty$, one has
\begin{equation}
\label{eq:pth2}
| (t^{-1} A_t^{\texttt{e}_t}) \ \triangle \ K | \xrightarrow[t \to +\infty]{\text{a.s.}} 0 ,
\end{equation}
where, for a set $A \subset \R^d$ and any $\texttt{e} > 0$, we write $A^{\texttt{e}}$ for the $\texttt{e}$-enlargement of $A$, that is, 
$$
A^{\texttt{e}} = \{ x \in \R^d : \exists y \in A \text{ s.t. } \|x-y\|_2 \le \texttt{e} \}.
$$
\end{enumerate}
\end{thm}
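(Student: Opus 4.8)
The plan is to deduce all three parts from the almost-sure convergence (\ref{defalpha}) of $n^{-1}\hata(0,nx)$, the quality of the comparison between $a$ and $\hata$ being exactly what the three hypotheses control. Two preliminary ingredients are needed. First, the \emph{uniform} form of (\ref{defalpha}), namely $\|x\|^{-1}|\hata(0,x)-\alpha(x)| \to 0$ almost surely as $\|x\| \to \infty$; this follows from (\ref{defalpha}) for the countable set of rational directions, the subadditivity of $\hata$, and tail bounds on $\hata(0,z)$ for $z$ in a growing window, by the standard shape-theorem interpolation argument. Second, the two comparison estimates furnished by the renormalized construction, of the form $\hata(0,x) \le a(0,x) + W(0)$ and $a(0,x) \le \hata(0,x) + W(0) + W(x)$, where $W$ is the local cost of connecting a site to the boundary of its renormalization block: on $\{0 \in \mathbf{C}_\infty\}$, $W(0)$ is a fixed almost-surely finite constant, while $(W(x))_{x \in \Z^d}$ is a stationary field, finite at $x$ on $\{x \in \mathbf{C}_\infty\}$, with $W(0)$ having the same integrability as $Z(0)$ (so $\E[W(0)^d] < \infty \Leftrightarrow \E[Z(0)^d] < \infty$, and $W(0) < \infty$ a.s.\ $\Leftrightarrow V(0) < \infty$ a.s.). I would also record the elementary bound $a(0,x) \ge \max(V(0), Z(x))$ for $x \neq 0$, which comes from $S_0 = 0$ and the fact that $S_{H_x - 1}$ is a neighbour of $x$; and note that, $\alpha$ being a norm, $K$ lies between two Euclidean balls, so $0$ is interior to $K$ and $\alpha(x) \asymp \|x\|$.

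The \emph{outer} bounds are cheap: from the comparison and the uniform convergence, $a(0,x) \ge \hata(0,x) - W(0) \ge (1-o(1))\alpha(x) - W(0)$, and since $a(0,x) \le t$ forces $\|x\| \le Ct$, this yields $\alpha(x) \le (1+\eps)t$ for $t$ large on $\{0 \in \mathbf{C}_\infty\}$ (on the complement $A_t$ is bounded and there is nothing to prove). This is (\ref{eq:pth1}), and a fortiori $t^{-1}A_t^\circ \subset (1+2\eps)K$, which settles the ``$\setminus K$'' half of (\ref{eq:shapemoment}), (\ref{shapemeas}) and (\ref{eq:pth2}) under (H) alone (for the last, one also uses that an $\e_t$-enlargement of $(1+\eps)tK$ sits inside $(1+2\eps)tK$ when $\e_t = o(t)$ and $0$ is interior to $K$). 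The \emph{inner}/measure bounds rest on the observation that for $p$ with $\alpha(p) \le 1-\delta$ and $x$ close to $tp$ one has $a(0,x) \le \hata(0,x) + W(0) + W(x) \le (1-\delta/2)t + W(x)$ for $t$ large, so $x \in A_t$ whenever $W(x) \le \delta t/2$. For (\ref{eq:shapemoment}) one needs \emph{every} such $x$ to satisfy this, i.e.\ $\max_{x \in tK} W(x) = o(t)$ almost surely, which holds if and only if $\E[W(0)^d] < \infty$, i.e.\ $\E[Z(0)^d] < \infty$; then $(1-\delta)K \subset t^{-1}A_t^\circ$ eventually. For (\ref{shapemeas}) (where $V(0) < \infty$ a.s., hence $\mathbf{C}_\infty = \Z^d$) and for (\ref{eq:pth2}) one only needs \emph{most} such $x$ to be good: by the multidimensional ergodic theorem applied to $\1_{\{W(x) > R\}}$ (intersected with $\{x \in \mathbf{C}_\infty\}$ under (H)), $t^{-d}\#\{x \in tK : x \text{ bad}\} \to |K|\,\P[0 \text{ bad}] \to 0$ as $R \to \infty$; this gives $|(1-\delta)K \setminus t^{-1}A_t^\circ| \to 0$, then $|K \setminus t^{-1}A_t^\circ| \to 0$ after letting $\delta \to 0$, and for (\ref{eq:pth2}), since the density of balls $B(x,n)$ that are entirely bad tends to $0$ as $n \to \infty$ and $\e_t > n$ eventually, the measure of $p$ for which $B(tp,\e_t)$ contains no good site (on $\{0\in\mathbf{C}_\infty\}$) tends to $0$.

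For necessity: if $\E[Z(0)^d] = \infty$ then, since the $Z(x)$ are independent along a sublattice of spacing $3$ and $\sum_x \P[Z(0) > c\|x\|] \asymp \E[Z(0)^d] = \infty$ for every $c$, Borel--Cantelli gives infinitely many $x$ with $Z(x) > c\|x\|$; with $c$ large enough the interval $[\alpha(x)/(1-\eps), Z(x))$ is nonempty and contains an integer $t$, and then $x/t \in (1-\eps)K$ while $a(0,x) \ge Z(x) > t$, so $x \notin A_t$ and the cell $x + [0,1)^d$ is disjoint from $A_t^\circ$, violating (\ref{eq:shapemoment}) for arbitrarily large $t$. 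If $\P[V(0) = \infty] > 0$ then $\P[Z(0) = \infty] \ge \P[V(y) = \infty \ \forall y \sim 0] > 0$, and for such $x$ one has $a(0,x) = \infty$, so $x + [0,1)^d$ is disjoint from $A_t^\circ$ for all $t$; the ergodic theorem then gives $\liminf_t |K \setminus t^{-1}A_t^\circ| \ge \P[Z(0) = \infty]\,|K| > 0$, violating (\ref{shapemeas}). (Since $\E[Z(0)^d] < \infty$ forces $V(0) < \infty$ a.s.\ and hence $\mathbf{C}_\infty = \Z^d$, the event $\{0 \in \mathbf{C}_\infty\}$ is vacuous in parts (1) and (2).)

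The main obstacle is twofold: establishing the uniform form of (\ref{defalpha}) with an adequate rate (ingredient (i) above), and — for part (1) — controlling the maximum of the correction field $W$ over the $\asymp t^d$ sites of $tK$, which is precisely what makes $\E[Z(0)^d] = \infty$ the borderline obstruction; both rely on the contour and renormalization estimates of the earlier sections, adapting Kesten's treatment of first-passage percolation to $d \ge 2$ and to site potentials allowed to take the value $+\infty$. A lesser point is to check carefully that the endpoint correction $W(0)$ built into $\hata$ inherits the tail of $Z(0)$ rather than that of $V(0)$.
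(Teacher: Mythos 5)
Your architecture for parts (2) and (3) and for both necessity directions matches the paper's: outer containment from $\hata\le a$ plus a shape theorem for $\hat A_t$ (whose tail input is the Antal--Pisztora chemical-distance bound), inner containment in measure from the multidimensional ergodic theorem applied to the level sets of the correction field, the enlargement in (\ref{eq:pth2}) handled by the vanishing density of boxes containing no site with finite correction, and Borel--Cantelli on $Z(x)>c\|x\|_1$ (resp.\ positive density of $\{Z(x)=\infty\}$) for necessity. Those parts of the proposal are sound.

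There is, however, a genuine gap in your sufficiency argument for part (1), and it sits exactly where you wave it off as ``a lesser point.'' Your claim that the correction field $W$ attached to the renormalized approximation $\hata$ ``has the same integrability as $Z(0)$'' is false for the natural construction. The correction at the target site $y$ is the cost of a livable path from $\Delta^g(y)$ into $y$; its last step enters $y$ from \emph{one particular} neighbour $y'$ dictated by the geometry of $\Delta^g(y)$, so the correction dominates $V(y')$ for a single neighbour and its tail is that of $V(0)$, not of $Z(0)=\min_{y'\sim y}V(y')$. Consequently $\max_{x\in tK}W(x)=o(t)$ requires $\E[V(0)^d]<+\infty$, and your argument only recovers Zerner's Theorem~8, not the optimal condition $\E[Z(0)^d]<+\infty$. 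To get the $Z$-tail one must (a) enter $y$ through its minimizing neighbour $\m(y)$, and (b) decouple the cost of \emph{reaching} $\m(y)$ from the value $V(\m(y))$ by taking a minimum over $2d$ pairwise disjoint connecting paths, so that the minimum of $2d$ independent sums has tail $\P[V(0)\ge s]^{2d}$, i.e.\ that of $Z(0)$. This is precisely why the paper introduces a second, non-renormalized approximation $a_m(x,y)=a(\m(x),\m(y))$ for part (1), together with the $2d$-disjoint-paths lemma, the resulting integrability lemma for minima over disjoint paths, and a block tail bound for $a_m(0,x)$ itself (a variance estimate along disjoint macroscopic paths) playing the role of your ``uniform form of (\ref{defalpha})'' in the moment-condition regime. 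None of these ingredients appears in your outline, and without them the borderline case $\E[Z(0)^d]<+\infty$, $\E[V(0)^d]=+\infty$ is not covered.
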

\begin{rem}
\label{rem1}
One can check that statement (\ref{eq:pth1}) is equivalent to the fact that
$$
\liminf_{\|x\| \to + \infty} \frac{a(0,x)}{\alpha(x)} \ge 1.
$$
Similarly, the fact that, for any $\eps > 0$, $(1-\eps)K \subset t^{-1} A_t$ for all $t$ large enough is equivalent to
$$
\limsup_{\|x\| \to + \infty} \frac{a(0,x)}{\alpha(x)} \le 1.
$$
As a consequence, part (1) of Theorem~\ref{t:shape} extends \cite[Theorem~8]{zer}, where (\ref{eq:shapemoment}) was obtained assuming that $\E[V(0)^d]$ is finite. 
\end{rem}

\medskip

\begin{rem}
\label{r:unifst}
Assuming further that $V(0)$ is uniformly bounded away from $0$ if $d = 2$, \cite[Theorem~13]{zer} provides a \emph{uniform} shape theorem, which gives the asymptotic behaviour of $a(x,y)$ when both $x$ and $y$ are allowed to go to infinity simultaneously, with $\|y-x\| \to + \infty$.
\end{rem}

\medskip

\begin{rem}
Statements similar to Theorems~\ref{t:existpoint} and~\ref{t:shape} can be derived with $a(x,y)$ replaced by the logarithm of the Green function, using for instance \cite[Lemma~5]{zer}.
\end{rem}

\medskip

\begin{rem}
In part (3) of Theorem~\ref{t:shape}, the assumption that $\lim_{t \to \infty} \e_t = +\infty$ is necessary if $V(0)$ can take the value $+\infty$. Indeed, in this case, the set $A_t$ is riddled with holes of arbitrarily large size. To see this, consider an ``island'' in $\Z^d$ surrounded by sites where the potential is infinite. Clearly, $a(0,x)$ is infinite for any site $x$ of this island (provided $0$ is not itself in the island), so these sites do not belong to $A_t$, and moreover, copies of this island occupy a non-zero proportion of the space. Taking the island large enough, one can check that (\ref{eq:pth2}) would not hold if $\e_t$ was to remain bounded.
\end{rem}

\medskip

We now turn to point-to-hyperplane exponents. 
For $x \in \R^d$ and $t \ge 0$, let $H^*_{x,t}$ be the stopping time
$$
H_{x,t}^* = \inf \{n \in \N : S_n \cdot x \ge t \},
$$
and
$$
a^*(x,t) = - \log \EE_0\left[ \exp\left( -\sum_{n = 0}^{H_{x,t}^* - 1}  V(S_n) \rr) \ \1_{\{H_{x,t}^* < +\infty\}} \right].
$$
We define the dual norm to $\alpha$ as
\begin{equation}
\label{defalpha*}
\alpha^*(x) = \sup_{y \neq 0} \ \frac{x \cdot y}{\alpha(y)}.	
\end{equation}
\begin{thm}[Point-to-hyperplane Lyapunov exponent]
\label{t:existplane}	
We recall that assumption (H) holds. For any $x \in \R^d \setminus \{0\}$, on the event $0 \in \mathbf{C}_\infty$, one has
\begin{equation}
\label{eq:pth}
 \frac{1}{t} a^*(x,t) \xrightarrow[t \to +\infty]{\text{a.s.}} \frac{1}{\alpha^*(x)}.	
\end{equation}
\end{thm}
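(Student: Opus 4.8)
The plan is to derive Theorem~\ref{t:existplane} from the point-to-point results, by squeezing $a^*(x,t)$ between point-to-point quantities and using the already-established almost sure convergence \eqref{defalpha} together with the point-to-hyperplane shape statement \eqref{eq:pth1}. Fix $x \in \R^d \setminus \{0\}$; by homogeneity we may assume $\alpha^*(x) = 1$, so the target limit is $1$. The key geometric observation is that the hyperplane $\{y : y \cdot x = t\}$ is tangent to the dilated ball $t K$ (recall $K = \{\alpha \le 1\}$), because $\alpha^*(x) = 1$ means $\sup_{y} (x\cdot y)/\alpha(y) = 1$, i.e.\ the supporting hyperplane of $K$ with normal direction $x$ touches $K$ at a point $y_0$ with $\alpha(y_0) = 1$ and $x \cdot y_0 = 1$. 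Thus the ``closest'' point of the hyperplane $\{y\cdot x \ge t\}$ to the origin, in $\alpha$-distance, is (up to lattice rounding) $t y_0$, with $\alpha(t y_0) = t$.

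For the upper bound $\limsup_t \frac{1}{t} a^*(x,t) \le 1$: since $H^*_{x,t} \le H_{y}$ for any lattice point $y$ with $y\cdot x \ge t$ (the walk hitting $y$ must have already crossed the hyperplane), monotonicity of the Feynman--Kac weight in the stopping time gives $a^*(x,t) \le a(0,y)$. Choose $y = y_n$ to be a lattice point close to $t y_0$ with $t = n$ (or interpolate $t$ between integer multiples, controlling the error via $a(0,z) \le a(0,y) + a(y,z)$ and the a.s.\ sublinear bound on $a$ over bounded-length detours, which follows from \eqref{eq:pth1}); then $\frac{1}{n} a(0, n y_0) \to \alpha(y_0) = 1$ by \eqref{defalpha}, giving the upper bound. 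Some care is needed because $t y_0$ need not be a lattice point and $\alpha$ applied to the nearest lattice point must be controlled; this is handled by taking rational approximations of $y_0$ and using the norm property of $\alpha$ plus continuity.

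For the lower bound $\liminf_t \frac{1}{t} a^*(x,t) \ge 1$: on the event $\{H^*_{x,t} < \infty\}$ the walk is at time $H^*_{x,t}$ at some site $y$ with $y\cdot x \ge t$, hence $\alpha(y) \ge t/\alpha^*(x) = t$ by \eqref{defalpha*}. Decomposing the Feynman--Kac expectation over the endpoint $S_{H^*_{x,t}} = y$, one gets $e(0,t\text{-hyperplane}) \le \sum_{y : y\cdot x \ge t} e(0,y)$, so $a^*(x,t) \ge -\log \sum_{y: y\cdot x \ge t} e(0,y)$. By \eqref{eq:pth1}, for any $\eps>0$ eventually $a(0,y) \ge (1-\eps)\alpha(y) \ge (1-\eps)t$ for all $y$ with $y \cdot x \ge t$ (and $\|y\|$ large, which is automatic here), and crucially this holds \emph{uniformly} over that half-space for $t$ large. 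The remaining issue is the summability of $\sum_{y : y \cdot x \ge t} e(0,y)$: one needs a crude a priori bound $e(0,y) \le C e^{-c\|y\|}$ valid for $\|y\|$ large, which follows from \eqref{eq:pth1} (giving $\alpha(y) \gtrsim \|y\|$ since $\alpha$ is a norm) and crude control of the number of lattice points. Then $-\log \sum_{y: y\cdot x \ge t} e(0,y) \ge (1-\eps) t - \log(\text{polynomial in } t) \ge (1-2\eps)t$ for $t$ large.

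The main obstacle is the lower bound, specifically making the bound $a(0,y) \ge (1-\eps)\alpha(y)$ hold simultaneously for \emph{all} $y$ in the unbounded half-space $\{y\cdot x\ge t\}$ and then summing $e(0,y)$ over this infinite set. This requires the exponential a priori decay of $e(0,y)$ in $\|y\|$, which must itself be extracted from \eqref{eq:pth1}: one should first record, as a preliminary lemma, that there exist (random, a.s.\ finite) constants such that $e(0,y) \le C_\omega e^{-c\|y\|}$ for all $y$, and that $\inf\{a(0,y)/\|y\| : \|y\| \ge R\} \to \inf_{\|z\|=1}\alpha(z) > 0$ as $R \to \infty$. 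With that in hand the summation is routine. One also needs the elementary convex-geometry fact that $\alpha^*$ as defined in \eqref{defalpha*} is indeed the support function of $K$, so that the extremal direction $y_0$ exists — this is standard since $\alpha$ is a norm on $\R^d$ and $K$ its unit ball.
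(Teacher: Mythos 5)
Your lower bound is correct and is a legitimate variant of the paper's argument. Where you sum $e(0,y)$ over the whole half-space and tame the infinite sum with an a priori exponential decay $e(0,y)\le C_\omega e^{-c\|y\|}$ (which does follow from \eqref{eq:pth1} together with the fact that $\alpha$ is a norm), the paper instead forces the walk to first hit the inner boundary of a dilated parallelepiped $tP_L$, with $L$ chosen so that $\inf_{\partial P_L}\alpha = 1/\alpha^*(x)$, so that only polynomially many boundary terms appear. Both routes rest on \eqref{eq:pth1} (via Remark~\ref{rem1}) in exactly the same way, and yours is no harder once the a priori decay is recorded.

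The upper bound, however, has a genuine gap. You pick a lattice point $y_n$ near $ty_0$ and invoke ``$\frac1n a(0,ny_0)\to\alpha(y_0)$ by \eqref{defalpha}''. But \eqref{defalpha} is a statement about the renormalized approximation $\hata$, not about $a$; and under assumption (H) alone --- the hypothesis of Theorem~\ref{t:existplane} --- radial almost sure (or even in-probability) convergence of $\frac1n a(0,nx)$ is false when $\P[V(0)=+\infty]>0$: by Theorem~\ref{t:existpoint}, a.s.\ convergence requires $\E[Z(0)]<\infty$ and convergence in probability requires $V(0)<\infty$ a.s. Concretely, the ray $\{ny_0\}$ a.s.\ meets infinitely many ``islands'' surrounded by sites of infinite potential, on which $a(0,\cdot)=+\infty$, so $\limsup_n \frac1n a(0,\lfloor ny_0\rfloor)=+\infty$. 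Your fallback --- controlling detours by ``the a.s.\ sublinear bound on $a$ \dots which follows from \eqref{eq:pth1}'' --- does not help either: \eqref{eq:pth1} is a \emph{lower} bound on $a$ and yields no upper bound on $a(y,z)$ along a detour. The correct tool, and the one the paper uses, is the enlarged shape theorem \eqref{eq:pth2} with, say, $\e_t=\sqrt t$: on the event $0\in\mathbf{C}_\infty$ it guarantees that for all large $t$ \emph{some} point $y_t$ with $a(0,y_t)\le\alpha(y_0)t$ lies within distance $\sqrt t+\eps t$ of $ty_0$; choosing $y_0$ with $x\cdot y_0>1$ and $\eps$ small enough that $x\cdot y_0-2\eps\|x\|_2\ge 1$, such a $y_t$ still lies in the half-space $\{z: x\cdot z\ge t\}$, whence $a^*(x,t)\le a(0,y_t)\le\alpha(y_0)t$, and one concludes by taking the infimum over such $y_0$ using $1/\alpha^*(x)=\inf\{\alpha(y): x\cdot y=1\}$.
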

This result was previously obtained in \cite[Corollary~C]{flu1} assuming that $\E[V(0)^d]$ is finite. Here, we obtain Theorem~\ref{t:existplane} as a consequence of part (3) of Theorem~\ref{t:shape}.

\medskip

\begin{rem}
\label{remiv}
In \cite{iv}, the authors consider potentials that are uniformly bounded away from $0$, and show that if $\P[V(0) = + \infty]$ is small enough, $d \ge 4$, $x$ is parallel to a coordinate axis, and at sufficiently high temperature (see the paragraph ``related works'' below for a definition of this), then the limit in (\ref{eq:pth}) exists, and coincides with the annealed point-to-hyperplane exponent on the event $0 \in \mathbf{C}_\infty$. 
\end{rem}

\medskip

\begin{rem}
Theorems~\ref{t:existpoint}, \ref{t:shape} and \ref{t:existplane} have their counterparts for first-passage percolation, that are commented on in section~\ref{s:firstpass}.
\end{rem}

\medskip

We then study the large deviations of the random walk under the \emph{weighted measure} $\PP_{n,V}$ defined by
$$
\frac{\d \PP_{n,V}}{\d \PP_0} = \frac{1}{Z_{n,V}} \ \exp\ll( -\sum_{k = 0}^{n-1} V(S_k) \rr),
$$
where $Z_{n,V}$ is a normalizing constant that may be called the \emph{partition function}, and is given by
\begin{equation}
\label{defZVn}
Z_{n,V} = \EE_0\ll[\exp\ll( -\sum_{k = 0}^{n-1} V(S_k) \rr) \rr].	
\end{equation}
\begin{thm}[Large deviations]
\label{t:ldp}
Assume that $0$ is in the support of the distribution of $V(0)$ (and that (H) is satisfied). For any $\lambda \ge 0$, let $\alpha_{\lambda}$ be the Lyapunov norm associated with the potential $V_\lambda = \lambda + V$, and, for any $x \in \R^d$, let
\begin{equation}
\label{defI}
I(x) = \sup_{\lambda \ge 0} (\alpha_\lambda(x) - \lambda).	
\end{equation}
Almost surely, on the event $0 \in \mathbf{C}_\infty$, the rescaled random walk $S_n/n$ under $\PP_{n,V}$ satisfies a large deviation principle with speed $n$ and (convex lower semi-continuous good) rate function $I$. In other words, with probability one, the fact that $0 \in \mathbf{C}_\infty$ implies that the following two properties hold:
\begin{equation}
\label{ldclosed}
\begin{array}{l}
\text{for any closed } F \subset \R^d, \\
\displaystyle{\liminf_{n \to + \infty} \ -\frac1n \log \PP_{n,V}[S_n \in n F] \ge \inf_{F} I},
\end{array}	
\end{equation}
and
\begin{equation}
\label{ldopen}
\begin{array}{l}
\text{for any open } O \subset \R^d, \\
\displaystyle{\limsup_{n \to + \infty} \ -\frac1n \log \PP_{n,V}[S_n \in n O] \le \inf_{O} I}.
\end{array}	
\end{equation}
\end{thm}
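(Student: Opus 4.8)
The plan is to obtain the large deviation principle from the point-to-point shape theorem (part (1) or rather (3) of Theorem~\ref{t:shape}) applied to a tilted family of potentials, following the classical Varadhan-type scheme for LDPs of random walks in random environments. The starting observation is that for $x \in \Z^d$ and the walk conditioned to be at $nx$ at time $n$, the Gibbs weight $\exp(-\sum_{k=0}^{n-1}V(S_k))$ combined with the simple-random-walk entropy cost of travelling ballistically to $nx$ is, up to subexponential corrections, governed by the quenched point-to-point quantity $a(0,nx)$ \emph{for the shifted potential}. More precisely, one verifies that adding a constant $\lambda$ to the potential has the effect of penalizing the length of the trajectory: for a path forced to go from $0$ to $nx$ in exactly $n$ steps one pays roughly $e^{-\lambda n}$ extra, so $\alpha_\lambda$ interpolates between the pure Lyapunov norm ($\lambda = 0$) and the pure entropy functional (large $\lambda$), and the Legendre-type transform (\ref{defI}) produces the rate function that simultaneously accounts for the energy cost encoded in $\alpha$ and the entropy cost of being ballistic.

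\smallskip

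\noindent\textbf{Step 1 (Upper bound, (\ref{ldopen})).} First I would establish a local estimate: for each fixed $x \in \Q^d$,
$$
\limsup_{n \to \infty} -\frac1n \log \PP_{n,V}\big[\,\|S_n - nx\|_\infty \le \delta n\,\big] \ \le\ I(x) + o_\delta(1),
$$
with probability one on $\{0 \in \mathbf{C}_\infty\}$. This is done by writing the numerator $\EE_0[\exp(-\sum_{k=0}^{n-1}V(S_k))\,;\,S_n \approx nx]$, splitting on the number of steps, and using that for every $\lambda \ge 0$ one has $\exp(-\sum_{k=0}^{n-1}V(S_k)) \le e^{\lambda n}\exp(-\sum_{k=0}^{H_{nx}-1}V_\lambda(S_k))\1_{\{H_{nx}\le n\}}$ on the relevant event (after a small argument to handle the walk after it first reaches the target, using that $V \ge 0$), so that the numerator is at most $e^{\lambda n} e^{-\hata_\lambda(0,nx)}$ up to the mild difference between $a$ and $\hata$; optimizing over $\lambda$ and invoking (\ref{defalpha}) for $V_\lambda$ (and $Z_{n,V} \ge$ contribution of a fixed finite loop, hence $\ge e^{-cn\cdot 0}$ type lower bounds once $0$ is in the support of $V(0)$) yields the claim. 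A standard compactness/finite-cover argument over a fine rational mesh of a large ball, together with a crude bound ruling out $\|S_n\| \gg n$ (the walk has at most $n$ steps), upgrades this to (\ref{ldopen}) for all open $O$. Here the hypothesis $0 \in \mathrm{supp}(V(0))$ is used to guarantee $Z_{n,V}$ does not decay exponentially, so it may be ignored in the rate.

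\smallskip

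\noindent\textbf{Step 2 (Lower bound, (\ref{ldclosed})).} For a fixed $x \in \Q^d$ and $\delta > 0$ I would lower bound $\PP_{n,V}[\|S_n - nx\|_\infty \le \delta n]$ by restricting to trajectories that first travel to a lattice point near $nx$ realizing a near-optimal cost for $V_\lambda$ with the minimizing $\lambda = \lambda(x)$ in (\ref{defI}), and then perform a short "filler" excursion to use up exactly $n$ steps while staying in $\mathbf{C}_\infty$ near $nx$ (possible on $\{0 \in \mathbf{C}_\infty\}$ with only a subexponential entropy loss, using that $V$ is finite and summable along such a local detour). This produces $\PP_{n,V}[\,\cdot\,] \ge e^{-\lambda n - \hata_\lambda(0,nx) - o(n)}/Z_{n,V}$, and since $Z_{n,V} = e^{o(n)}$, the shape theorem (\ref{defalpha}) for $V_\lambda$ gives the matching bound $-\frac1n\log \ge \alpha_\lambda(x) - \lambda + o_\delta(1) \ge I(x) - o_\delta(1) + (\text{gap from the sup})$; one must argue the sup in (\ref{defI}) is attained or nearly attained, which follows from $\lambda \mapsto \alpha_\lambda(x)$ being concave (as an infimum over paths of affine functions of $\lambda$, up to the $\hata$ approximation) and $\alpha_\lambda(x) - \lambda \to -\infty$ as $\lambda \to \infty$. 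Standard projective-limit / exponential-tightness arguments then give (\ref{ldclosed}).

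\smallskip

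\noindent\textbf{Step 3 (Properties of $I$ and cleanup).} Finally I would check that $I$ is convex (Legendre-type sup of the norms $\alpha_\lambda$, each convex, minus the affine term $\lambda$ — convexity follows because $(\lambda,x)\mapsto \alpha_\lambda(x)$ is jointly convex, being a sup/inf structure inherited from the subadditive construction), lower semi-continuous and a good rate function (its sublevel sets are bounded since $\alpha_0 = \alpha$ is a norm, forcing $I(x) \ge \alpha(x) - 0 = \alpha(x) \to \infty$), and that $I$ does not depend on the realization of $V$ (each $\alpha_\lambda$ is deterministic by Theorem~\ref{t:shape}). Exponential tightness is immediate since $\|S_n\|_\infty \le n$ deterministically.

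\smallskip

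\noindent\textbf{Main obstacle.} The delicate point is the lower bound of Step 2: one needs that a near-geodesic for $V_\lambda$ from $0$ to (a neighbourhood of) $nx$ can be completed, \emph{within exactly $n$ steps and without leaving a good region}, into a genuine contribution to $\PP_{n,V}$, and that the difference between the true cost $a$ along simple-random-walk paths and the renormalized proxy $\hata_\lambda$ is $o(n)$ uniformly in $\lambda$ over the relevant compact range. Controlling this $a$ versus $\hata_\lambda$ discrepancy uniformly in $\lambda$ — in the full generality of assumption (H), where potentials may be infinite and $\hata$ is defined only on the renormalized scale — is where the work of the earlier sections (the renormalization giving (\ref{defalpha})) has to be leveraged carefully, in particular checking the construction is monotone/robust under the tilt $V \mapsto V+\lambda$.
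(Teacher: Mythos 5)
Your overall architecture is the same as the paper's (a Chebyshev-type bound plus the shape theorem for the tilted potentials in one direction; an explicit strategy plus $Z_{n,V}=e^{o(n)}$ in the other; convexity/goodness of $I$ from concavity of $\lambda\mapsto\alpha_\lambda(x)$), but the two halves are attached to the wrong statements and several inequalities run the wrong way. The argument of your Step 1 --- bounding the numerator above by $e^{\lambda n}e^{-a_\lambda(0,nx)}$, taking $\sup_\lambda$, and covering --- produces $\liminf -\frac1n\log\PP_{n,V}\ge I(x)$, i.e.\ (\ref{ldclosed}), not (\ref{ldopen}); conversely your Step 2 construction is a lower bound on probabilities and is the proof of (\ref{ldopen}). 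Inside Step 2 the sign of $\lambda$ is also off: since $e^{-\sum V(S_k)}=e^{+\lambda n}e^{-\sum V_\lambda(S_k)}$, a successful strategy yields $\PP_{n,V}\ge e^{\lambda n-\text{cost}_\lambda-o(n)}$ and hence $\limsup-\frac1n\log\PP_{n,V}\le\alpha_\lambda(x)-\lambda\le I(x)$, not the chain ``$-\frac1n\log\ge\alpha_\lambda(x)-\lambda\ge I(x)$'' you wrote (the last inequality is also backwards, $I$ being the supremum). Beyond relabelling, one substantive point is missing from your closed-set bound: the supremum over $\lambda$ and the infimum over $F$ do not commute for free, and the a.s.\ statement $\liminf a_\lambda(0,x)/\alpha_\lambda(x)\ge1$ must hold simultaneously for \emph{all} $\lambda\ge0$ on one full-measure event (the paper's Proposition~\ref{p:unifliminf}, which rests on the joint continuity of $(\lambda,x)\mapsto\alpha_\lambda(x)$, Proposition~\ref{p:jointconti}), after which $F$ is covered by finitely many $F_i$ with associated $\lambda_i$. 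Your ``jointly convex'' in Step 3 is also wrong ($\alpha_\lambda(x)$ is \emph{concave} in $\lambda$), though harmless since $I$ is a supremum of convex functions of $x$.

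The genuine gap is the one you yourself flag as the ``main obstacle'' and then do not resolve: making the near-geodesic for $V_\lambda$ arrive at time exactly $n$. A $V_\lambda$-optimal trajectory to $nx$ takes about $n\alpha_\lambda'(x)$ steps; if $\alpha_{\lambda-}'(x)\ge1$ no such trajectory fits into the time budget, and if it arrives much too early the waiting period costs an extra factor that must be shown to be negligible. The paper's resolution is Proposition~\ref{velocity}: the time-constrained cost $\hata_\lambda(0,nx,ns_1,ns_2)$ still has rate $\alpha_\lambda(x)$ whenever $(s_1,s_2)$ meets the subdifferential interval $[\alpha_{\lambda+}'(x),\alpha_{\lambda-}'(x)]$. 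This is not a routine consequence of the shape theorem --- it requires the two-parameter subadditive inequality of Step 2 of its proof, comparisons with nearby values of $\lambda$ to rule out too-fast and too-slow arrivals, and the directional ergodic statements of Proposition~\ref{alllambda} (a weak uniform shape theorem along a fixed direction). One must then choose $\lambda$ approaching $\lambda_0(x)=\sup\{\lambda:\alpha_{\lambda-}'(x)\ge1\}$, with separate treatment of the cases $\lambda_0(x)=0$ and $\lambda_0(x)=+\infty$ (the latter handled by approximating $x$ by $\mu_kx$ and using continuity of $I$ along the convex hull, part (3) of Proposition~\ref{ratefunction}). Finally, the ``filler excursion'' needs a quantitative existence statement for nearby low-potential regions reachable through healthy paths (Proposition~\ref{clearings}); under (H) alone this is not automatic since the walk may not be able to wait where it lands. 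Your proposal names the difficulty but supplies none of these ingredients, so the lower bound for open sets does not close as written.
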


This result was obtained in \cite[Theorem~D]{zer} assuming that $\E[V(0)^d]$ is finite. The proof given there relies on the uniform shape theorem mentioned in Remark~\ref{r:unifst}. In order to obtain Theorem~\ref{t:ldp}, we will show a much weaker (but still sufficient) form of uniform shape theorem, in which we control $\hata(x,y)$ only when $x$ and $y$ tend to infinity simultaneously along the same fixed direction (see Proposition~\ref{alllambda}).

\medskip

\begin{rem}
Applying Varadhan's lemma, one can deduce from Theorem~\ref{t:ldp} a large deviation principle for the random walk with a constant drift, under the weighted measure. One of the most remarkable consequences of this large deviation principle is that there is a transition from sub-ballistic to ballistic behaviour according to whether the drift $h$ satisfies $\alpha^*(h) < 1$ or $\alpha^*(h) > 1$. We refer to \cite[Theorem~D]{flu1} for a precise statement. There, it is assumed that $\E[V(0)^d]$ is finite, as a necessary condition for the large deviation principle obtained in \cite{zer} to hold. Using instead Theorem~\ref{t:ldp} as an input, the proof given there applies as well, so that \cite[Theorem~D]{flu1} is in fact valid under assumption (H) only, on the event $0 \in \mathbf{C}_\infty$.
\end{rem}

\medskip

\noindent \textbf{Related works.} Outside of the previously mentioned papers \cite{zer,flu1}, one should also note recent work on the question of whether the disorder is \emph{weak} or \emph{strong}. The disorder is said to be weak if quenched and annealed Lyapunov exponents coincide. The potential is usually taken of the form $\lambda + \beta V$, for $\lambda,\beta \ge 0$, where $\beta$ is interpreted as the inverse temperature. Weak disorder results have been obtained by \cite{flu2,zyg1,iv} assuming $\lambda > 0$, $d \ge 4$, and $\beta$ sufficiently small (that is, in the high temperature regime). On the other hand, in \cite{zyg2}, strong disorder has been proved to hold for any $\lambda > 0$ and $\beta > 0$  if $d \le 3$, and for sufficiently large $\beta$ in any dimension. Except for \cite{iv} that was mentioned in Remark~\ref{remiv}, these works rely on the result of existence of Lyapunov exponents given by \cite{zer}. We refer to \cite{ivreview} for a survey on this and related questions.

In a slightly different direction, \cite{wan1,wan2,kmz} study the asymptotic behaviour of the quenched and annealed Lyapunov exponents under the potential $\beta V$, in the limit of high temperatures. Assuming that the potential is integrable, it is shown in \cite{kmz} that both the quenched and annealed Lyapunov exponents are equivalent to the same $c \sqrt{\beta}$ as $\beta$ tends to $0$, where $c$ is an explicit constant.

The questions we consider here have been first investigated for Brownian motion in a potential that can be written as 
$\sum_{i} W(\cdot -x_i)$, where $W$ is a positive, bounded, measurable and compactly supported function, and $(x_i)$ is a Poisson point process of fixed intensity on $\R^d$. For this model, existence of Lyapunov exponents, shape theorem and large deviation principle were obtained in \cite{sznit}. Naturally, many ideas originating from \cite{sznit} (or equivalently, from \cite[Chapter~5]{sznitman}) permeate throughout the present paper.

\medskip 

\noindent \textbf{Organization of the paper.} Sections~\ref{s:ptpmoment} and~\ref{s:shapemoment} are devoted, respectively, to the proof of part (2) of Theorem~\ref{t:existpoint}, and part (1) of Theorem~\ref{t:shape}. There, we introduce another approximation of $a(x,y)$ than the one discussed in the introduction. These sections also serve as a preparation for the similar but more intricate situation encountered in the rest of the paper. After recalling some facts about discrete geometry in section~\ref{s:geom}, we tackle the construction of the renormalization procedure in section~\ref{s:renormalize}. We then obtain the proof of part (3) of Theorem~\ref{t:existpoint} and parts (2-3) of Theorem~\ref{t:shape} in sections~\ref{s:ptpnomoment} and~\ref{s:shapenomoment}, respectively. The next two sections are devoted to the proof of the large deviation principle. Section~\ref{s:ldplow} provides the lower bound (\ref{ldclosed}), while section~\ref{s:ldpup} gives a proof of the more difficult upper bound (\ref{ldopen}). Finally, section~\ref{s:firstpass} discusses counterparts of Theorems~\ref{t:existpoint}, \ref{t:shape} and \ref{t:existplane} in the context of first-passage percolation.

\medskip

\noindent \textbf{Notations.} For $x \in \R^d$ and $r > 0$, let
\begin{equation}
\label{defDxr}
D(x,r) = \{y \in \R^d : \|y-x\|_1 \le r\}
\end{equation}
be the $\|\cdot\|_1$-ball of centre $x$ and radius $r$. For $x \in \Z^d$ and $R \in \N$, we call $\emph{box}$ of centre $x$ and size $R$ the set defined by
\begin{equation}
\label{defBxr}
B(x,R) = x + \{-R,\ldots,R\}^d.	
\end{equation}

%
%
%
%
%
%
%
\section{Point-to-point exponent under a moment condition}
\label{s:ptpmoment}
\setcounter{equation}{0}
The aim of this section is to prove part (2) of Theorem~\ref{t:existpoint}.

First, let us see that in order for almost sure convergence to hold, one must have $\E[Z(0)] < +\infty$. If this is not the case, a Borel-Cantelli argument shows that for any $M > 0$, the events
$$
Z(nx) \ge Mn
$$
happen for infinitely many $n$'s with probability one. Since $a(0,nx) \ge Z(nx)$, this observation implies that almost surely,
$$
\limsup_{n \to +\infty} \frac1n a(0,nx) = + \infty.
$$

We will now show that the condition $\E[Z(0)] < +\infty$ is sufficient for almost sure convergence. The problem we face is that $a(x,y)$ need not be integrable. We need to define an approximation of it that will, at the same time, satisfy the assumptions of the ergodic theorem, and be sufficiently close to $a(x,y)$ for the almost sure convergence of (\ref{eq:limpoint}) to be preserved.

For any $x \in \Z^d$, we define $\m(x)$ to be the $y \sim x$ such that $V(y) = Z(x)$, where $Z(x)$ is defined in (\ref{defZ}), and with some deterministic tie-breaking rule. Let $a_m(x,y)$ be the cost associated to a travel from $\m(x)$ to $\m(y)$, that is to say,
\begin{equation}
\label{defam}
a_m(x,y) = a(\m(x),\m(y)).
\end{equation}
Before explaining how $a_m(x,y)$ approximates $a(x,y)$, we need the following lemma.
\begin{lem}
\label{2dpaths}
For any $x \in \Z^d$, there exist $2d$ paths linking $0$ to $x$ and such that, outside of the starting and ending points, the sets of sites visited by any two of these paths do not intersect. Moreover, one can choose the paths so that the length of the longest one does not exceed $\|x\|_1 + 8$. 
\end{lem}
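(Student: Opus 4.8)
The plan is to reduce to the case where $x$ lies on a coordinate axis, by first moving from $0$ to a point $x'$ with $\|x - x'\|_1$ small along the coordinate hyperplanes, and then handle the axis case explicitly. More precisely, write $x = (x_1, \dots, x_d)$. The idea is to construct $2d$ disjoint paths by using the $2d$ "unit directions" $\pm e_1, \dots, \pm e_d$ as the initial steps leaving $0$, and the $2d$ analogous directions as the final steps entering $x$, and to route each path through a corridor that stays close to one of the $2d$ edges of a thin rectangular frame. The key point is that in dimension $d \ge 2$ there is enough room to keep these $2d$ corridors pairwise disjoint away from the endpoints.

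First I would treat the axis case $x = m e_1$, $m \ge 0$. Here the $2d$ paths can be taken as follows: two of them go essentially straight, stepping along $\pm e_1$ to leave $0$ and to enter $x$ and traversing a segment parallel to $e_1$ at a fixed offset in one of the transverse coordinates; the remaining $2(d-1)$ use an initial step $\pm e_j$ for $j \ge 2$, then a long stretch parallel to $e_1$ at offset $\pm e_j$, then a step back to the axis near $x$. One checks that with offsets chosen in $2d$ distinct "slots" the interiors are disjoint, and each path has length at most $m + 8$ (the constant $8$ absorbing the at most two detour steps at each end, plus a small slack). For general $x$, I would first build a short common-free "spreading" gadget near $0$ and near $x$: from $0$, send the $2d$ paths a bounded number of steps so that they occupy $2d$ disjoint points, then have each travel monotonically (changing one coordinate at a time in a prescribed order that depends on which of the $2d$ paths it is) toward $x$, arranged so that the $2d$ monotone staircases never share an interior vertex. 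Monotone lattice paths between the same two points can always be made vertex-disjoint in their interiors when there are at least $2$ of them in $d \ge 2$; pushing this to $2d$ of them requires shifting each staircase into its own transverse slot, which is where the bound $\|x\|_1 + 8$ comes from — the $+8$ accounts for entering and leaving the slots at both endpoints.

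The main obstacle I anticipate is the bookkeeping needed to guarantee \emph{simultaneously} the vertex-disjointness of all $2d$ interiors \emph{and} the length bound $\|x\|_1 + 8$: a naive construction that routes each path around a large frame would give disjointness but a much worse length bound, while the tightest monotone staircases risk colliding. The resolution is that a detour of total length at most $8$ (at most $4$ extra steps near each endpoint) suffices to move a monotone path off the "canonical" staircase into a parallel copy offset by one unit in a chosen transverse direction, and there are at least $2(d-1) \ge d$ transverse unit directions together with their reversals and the two orientations along the main direction to furnish $2d$ distinct slots. I would organize the proof by (i) fixing an ordering of coordinates and an assignment of each of the $2d$ paths to a slot, (ii) describing the canonical monotone staircase and its offset copies, (iii) describing the bounded-length connectors at $0$ and at $x$, and (iv) verifying disjointness slot by slot and summing lengths. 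The argument is elementary but the case $d = 2$ versus $d \ge 3$ may warrant a brief separate remark, since with $d = 2$ one has exactly $2d = 4$ paths and the slots are tight; as noted in the introduction, higher-dimensional contour-type arguments of this flavor already appear in \cite{kes}.
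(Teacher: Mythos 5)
Your axis case is essentially the paper's two‑dimensional base case generalized to $d$ coordinates, and it is fine. The gap is in the general case, where what you give is a plan rather than a construction. For $x$ with several nonzero coordinates there is no ``main direction,'' so the $2d$ transverse slots are never actually defined; an offset copy of a staircase ends at $x$ plus the offset rather than at $x$, so all $2d$ connectors at the $x$ end must be threaded back through the $2d$ distinct neighbours of $x$ without meeting one another or the other staircases, and this is exactly where collisions threaten --- it is the whole content of the lemma, and ``verifying disjointness slot by slot'' is precisely the step you have deferred. Note also that the number of pairwise internally disjoint \emph{monotone} paths from $0$ to $x$ equals the number of nonzero coordinates of $x$ (a monotone path must leave $0$ in a direction in which $x$ is nonzero), so at least $d$ of your $2d$ paths --- and all but one of them when $x$ lies on an axis --- must carry detours; your blanket claim that two monotone paths can always be made internally disjoint already fails for axis points. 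The degenerate cases ($x$ with some zero and some nonzero coordinates, $x$ a neighbour of $0$) are likewise not addressed and need separate care.

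For comparison, the paper avoids this global bookkeeping by inducting on the dimension. The base case $d=2$ is done by hand: for $x=(m,n)$ with $m,n>0$ the four paths are the four arcs of the boundaries of the two rectangles $[0,m+1]\times[-1,n]$ and $[-1,m]\times[0,n+1]$, which meet only at $0$ and $x$, while the axis case is listed explicitly. In the induction step one prunes the last step of the $2d$ paths living in the hyperplane $x_{d+1}=0$, lifts them to height $x_{d+1}$, and adds two new paths using the $(d+1)$-st direction; the invariant that every constructed path stays in $\{y : y_d \ge -1\}$ reserves a free lane for the second of the two new paths and keeps the worst length at $\|x\|_1+8$. Only two new paths have to be fitted in at each stage, which is what makes the disjointness check tractable. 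To salvage your direct approach you would have to write out the slot assignment and all connectors explicitly and check the $\binom{2d}{2}$ pairwise disjointness claims for every shape of $x$; the inductive route is considerably less work.
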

\begin{proof}
By symmetry we can assume that all coordinates of $x$ are positive ($\ge 0$), and prove the result by induction on the dimension. In order to control the lengths, it will be convenient to impose all the paths we construct to be contained in the set
\begin{equation}
\label{constraintpath}
\mcl{H}_{-1} = \{x = (x_1,\ldots, x_d) \in \Z^d : x_d \ge -1 \}.	
\end{equation}

In dimension $2$, if $x = (n,0)$ for some $n$, then one can travel through
\begin{itemize}
\item $(0,0) \to (n,0)$,
\item $(0,0) \to (0,1) \to (n,1) \to (n,0)$,
\item $(0,0) \to (0,-1) \to (n,-1) \to (n,0)$,
\item $(0,0) \to (-1,0) \to (-1,2) \to (n+1,2) \to (n+1,0) \to (n,0)$ (this path has length $\|x\|_1 + 8$, the others being shorter),
\end{itemize}
where the arrow denotes travel by a straight line. If $x = (0,n)$, we can obtain $4$ such paths by symmetry (although not by a rotation for we want them to stay in the set $\mcl{H}_{-1}$). If $x = (m,n)$ with both $m$ and $n$ strictly positive, then the two rectangles that are the boundaries of
$$
[0,m+1] \times [-1,n] \quad \text{ and } \quad [-1,m] \times [0,n+1]
$$
intersect only at $(0,0)$ and $(m,n)$. The sides of the rectangles thus define an adequate set of $4$ paths.

Let us now assume that the lemma is true in dimension $d$, and consider $x = (x_1,\ldots,x_{d+1}) \in \Z^{d+1}$. One can find a set of $2d$ adequate paths linking $0$ to $\td{x} = (x_1,\ldots,x_{d},0)$ in the hyperplane $\mcl{H}_d$ made of points with the $(d+1)$-th coordinate equal to $0$. If $x_{d+1} = 0$, then we can safely add the paths
\begin{itemize}
\item $(0,\ldots,0,1) \to (x_1,\ldots,x_{d},1) \to (x_1,\ldots,x_{d},0)$,
\item $(0,\ldots,0,-1) \to (x_1,\ldots,x_{d},-1) \to (x_1,\ldots,x_{d},0)$.
\end{itemize}
Otherwise, we prune the last step of each of the $2d$ paths, so that we are left with $2d$ paths connecting $0$ to each neighbour of $\td{x}$ that lies in $\mcl{H}_d$. For each path, we then travel in the direction orthogonal to $\mcl{H}_d$ up to the height $x_{d+1}$, thus connecting $0$ to a neighbour of $x$ lying in the hyperplane $x + \mcl{H}_d$, and we add a last step towards $x$ to each of these paths. When $(x_1,\ldots, x_d,0)$ is not a neighbour of $0$, this construction leaves the following two paths available
\begin{itemize}
\item $0 \to (0,\ldots,0,-1) \to  (x_1,\ldots, x_d,-1) \to x$
\item $0 \to (0,\ldots,0,x_{d+1} + 1) \to  (x_1,\ldots, x_d,x_{d+1} + 1) \to x$.
\end{itemize}
Otherwise, say for definiteness that $(x_1,\ldots, x_d,0) = (0,\ldots,0,1,0)$. Then the arrow pointing in the $(d+1)$-th dimension ``upwards'' from $0$ is already used, but one can use instead the path
\begin{itemize}
\item $0 \to (0,\ldots,0,1,0) \to x$.
\end{itemize}
Note that if $y = (y_1,\ldots,y_d,0) \in \mathcal{H}_d$ is visited by one of the paths that is already constructed, then it must be that $y_d \ge -1$. As a consequence, the path
\begin{itemize}
\item $0 \to (0,\ldots,0,-1) \to (0,\ldots,0,-2,-1) \to (0,\ldots,0,-2,x_{d+1} + 1)$ 
\begin{flushright}$\to (x_1,\ldots,x_d,x_{d+1} + 1) \to x$.\end{flushright}
\end{itemize}
is still available (and goes through $\|x\|_1 + 8$ edges), so the proof is complete.
\end{proof}
For a path $\gamma = (\gamma_0,\ldots,\gamma_l)$, we write $|\gamma| = l$ for the length of the path, and we understand the notation ``$x \in \gamma$'' to mean that there exists $k \le l$ such that $x = \gamma_k$. We also write $\td{\gamma}$ to denote the path with starting and ending points removed. For any $x \in \Z^d$, Lemma~\ref{2dpaths} gives us $2d$ paths $\gamma^1_{0,x}, \ldots, \gamma^{2d}_{0,x}$ that connect $0$ to $x$ and do not intersect each other, except at the starting and ending points. 

\begin{equation}
\label{defgammaxy}
\begin{array}{l}
\text{For any } x,y \in \Z^d \text{ and } 1\le i \le 2d,\text{ we let }\gamma^i_{x,y} \text{ be } \\
\text{the translation by } x \text{ of the path } \gamma^i_{0,y-x}.
	\end{array}	
\end{equation}
The path $\gamma^i_{x,y}$ thus connects $x$ to $y$. The following proposition gives a precise meaning to the idea that $a_m$ is an approximation of $a$. 
\begin{prop}
\label{approxam}
Let 
\begin{equation}
\label{defum}
u_m(y) = \sum_{\substack{y' \sim y \\ y'' \sim y}} \min_{1 \le i \le 2d} \sum_{z \in \td{\gamma}^i_{y',y''}} (V(z) + \log(2d)).
\end{equation}	
For any $x,y \in \Z^d$, one has
\begin{equation}
\label{eq:approxam1}
a_m(x,y)  \le a(x,y) + Z(x) + 2 \log(2d) + u_m(y),
\end{equation}
\begin{equation}
\label{eq:approxam2}
a(x,y) \le a_m(x,y) + V(x) + Z(y) + 2 \log(2d).
\end{equation}
Moreover, for any $x,y,z \in \Z^d$, one has
\begin{equation}
\label{triangleam}
a_m(x,z) \le a_m(x,y) + a_m(y,z),
\end{equation}
and the same is true if in (\ref{triangleam}), $a_m$ is replaced by $a$.
\end{prop}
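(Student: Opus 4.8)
The plan is to establish the four assertions of Proposition~\ref{approxam} in order, relying on two elementary building blocks: first, the exact sub-multiplicativity $e(x,z) \ge e(x,y)\, e(y,z)$ (equivalently $a(x,z) \le a(x,y) + a(y,z)$), which follows by inserting the hitting time $H_y$ into the definition of $e(x,z)$ and using the strong Markov property to split the trajectory; second, the one-step crude bound $e(x,y) \ge \frac{1}{2d} e^{-V(x)} e(x',y)$ for any neighbour $x' \sim x$, obtained by forcing the first step of the walk to go from $x$ to $x'$. Applying the one-step bound $|\td\gamma^i| \le |\gamma^i|$ times along each of the $2d$ disjoint paths of Lemma~\ref{2dpaths} --- and summing the contributions $e^{-V(z)}$ over the path, then over the $2d$ choices, to get the best path --- yields the more substantial estimate that connecting $y'$ to $y''$ through the cheapest of the $2d$ detours costs at most $\min_i \sum_{z \in \td\gamma^i_{y',y''}} (V(z) + \log(2d))$; this is exactly the quantity $u_m$ measures, once we also pay $V(y') + \log(2d)$ and $V(y'') + \log(2d)$ for the endpoints, which is where the extra ``$+2\log(2d)$'' and the endpoint potentials come from.

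For the triangle inequality~(\ref{triangleam}): the statement for $a$ is the sub-multiplicativity noted above, and the statement for $a_m$ is then immediate since $a_m(x,z) = a(\m(x),\m(z)) \le a(\m(x),\m(y)) + a(\m(y),\m(z)) = a_m(x,y) + a_m(y,z)$. For~(\ref{eq:approxam2}): write $a(x,y) \le a(x,\m(x)) + a(\m(x),\m(y)) + a(\m(y),y)$ using sub-multiplicativity twice, then bound $a(x,\m(x)) \le V(x) + \log(2d)$ and $a(\m(y),y) \le V(\m(y)) + \log(2d) = Z(y) + \log(2d)$ using the one-step bound (here I use $\m(x) \sim x$ and that a single step from $x$ to $\m(x)$, resp.\ from $\m(y)$ to $y$, realises the bound); collecting terms gives $a(x,y) \le a_m(x,y) + V(x) + Z(y) + 2\log(2d)$, as required. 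Note that on a convention $e(x,y)=1$ when $x=y$ one has to check the degenerate cases separately, but they only make the inequalities easier.

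For~(\ref{eq:approxam1}), which I expect to be the main obstacle, I would go the other way: $a_m(x,y) = a(\m(x),\m(y)) \le a(\m(x),x) + a(x,y) + a(y,\m(y))$ by sub-multiplicativity. The first term is controlled by a single step from $\m(x)$ back to $x$, costing at most $V(\m(x)) + \log(2d) = Z(x) + \log(2d)$. The delicate term is $a(y,\m(y))$: a direct step from $y$ to $\m(y)$ would cost $V(y) + \log(2d)$, but $V(y)$ is precisely the potential we cannot afford to keep (it may be $+\infty$). The trick is to route from $y$ to $\m(y)$ by first stepping away from $y$ to some neighbour $y'$, travelling through the interior of one of the $2d$ disjoint paths $\gamma^i_{y',y''}$ to another neighbour $y''$ of $y$, and then stepping into $\m(y)$ --- never revisiting $y$ itself. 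This is exactly the object appearing in the definition~(\ref{defum}) of $u_m$: summing over the pair $(y',y'')$ and minimising over $i$ gives $a(y,\m(y)) \le \log(2d) + u_m(y) + \log(2d)$ (the two extra $\log(2d)$ for the first and last steps), where crucially $V(y)$ does not appear because the path $\td\gamma^i_{y',y''}$ avoids $y$. Combining, $a_m(x,y) \le Z(x) + \log(2d) + a(x,y) + \log(2d) + u_m(y) + \log(2d)$; absorbing I obtain $a_m(x,y) \le a(x,y) + Z(x) + u_m(y) + 2\log(2d)$ after a slightly more careful bookkeeping of which $\log(2d)$ factors are genuinely needed (the disjointness of the $2d$ paths is what lets one pay only $\log(2d)$ rather than $\log$ of the number of steps for the ``choose the cheapest path'' operation, via $\sum_i e(\cdot) \le 2d \max_i e(\cdot)$). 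The one subtlety to handle with care is ensuring, when $y' = \m(y)$ or $y'' = \m(y)$ or $y$ is an endpoint of some $\gamma^i_{y',y''}$, that the routing still makes sense and avoids $y$; since the paths of Lemma~\ref{2dpaths} meet only at their endpoints and those endpoints are neighbours of $y$ rather than $y$ itself, all such cases are covered, possibly after trivial shortening of a path.
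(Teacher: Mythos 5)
Your arguments for (\ref{eq:approxam2}) and (\ref{triangleam}) are correct and in the spirit the paper intends (it leaves them to the reader): the strong Markov property at $H_y$ gives $e(x,z)\ge e(x,y)\,e(y,z)$, and the one-step bound $e(x,x')\ge \frac{1}{2d}e^{-V(x)}$ for $x'\sim x$ handles the endpoint corrections, yielding $a(x,y)\le a_m(x,y)+V(x)+Z(y)+2\log(2d)$.

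The proof of (\ref{eq:approxam1}), however, has a genuine gap. You reduce it to bounding $a(y,\m(y))$ by $2\log(2d)+u_m(y)$, arguing that a detour through $\td{\gamma}^i_{y',y''}$ avoids $y$ so that $V(y)$ ``does not appear''. But $a(y,\m(y))$ is by definition $-\log\EE_y\bigl[\exp\bigl(-\sum_{n=0}^{H_{\m(y)}-1}V(S_n)\bigr)\1_{\{H_{\m(y)}<\infty\}}\bigr]$, and since the walk starts at $y$ and $H_{\m(y)}\ge 1$, the term $V(S_0)=V(y)$ is always present in the exponent. Hence $a(y,\m(y))\ge V(y)$ no matter which path the walk follows after time $0$, and your claimed bound fails whenever $V(y)$ is large (and is vacuous when $V(y)=+\infty$, which the model allows). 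Since the whole point of $a_m$ and $u_m$ is to remove the possibly non-integrable $V(y)$ from the estimate, the chain $a_m(x,y)\le a(\m(x),x)+a(x,y)+a(y,\m(y))$ is a dead end: no term in it can absorb $V(y)$. The paper's proof instead avoids ever standing on $y$: it introduces $\tau(y)$, the first time the walk is adjacent to $y$, notes that on $\{H_y<\infty\}$ one has $\tau(y)\le H_y-1$, so that $e^{-a(x,y)}\le\sum_{y'\sim y}\EE_x\bigl[\exp\bigl(-\sum_{n=0}^{\tau(y)}V(S_n)\bigr),\,\tau(y)<\infty,\,S_{\tau(y)}=y'\bigr]$, and then, by the Markov property at $\tau(y)$, grafts onto the truncated trajectory the cheapest of the $2d$ disjoint paths from $y'$ to $\m(y)$ --- rerouting the walk around $y$ before it ever pays $V(y)$; the endpoint $\m(x)$ is then handled by your one-step bound. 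Your routing idea is the right one, but it must be implemented inside the expectation defining $a(x,y)$, at the level of trajectories, not as a triangle inequality passing through $a(y,\m(y))$.
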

\begin{rem}
\label{presquepreuve}
Note that the ``cost'' associated to the event that the random walk goes from $x$ to a given neighbour in one step is $V(x) + \log(2d)$, where $\log(2d)$ comes from the fact that the walk has probability $1/(2d)$ to make its first jump to this particular neighbour. Inequality (\ref{eq:approxam2}) can thus easily be understood. It states that the cost to travel from $x$ to $y$ is smaller than the cost to go from $x$ to $\m(x)$ in one step ($V(x) + \log(2d)$), plus the cost to go from $\m(x)$ to $\m(y)$ ($a_m(x,y)$), plus the cost to go from $\m(y)$ to $y$ in one step ($Z(y) + \log(2d)$). A similar interpretation holds for (\ref{triangleam}). Inequality (\ref{eq:approxam1}) is similar, except that multiple possible paths are considered.	
\end{rem}

\medskip

\begin{rem}
In view of the way we chose the paths in (\ref{defgammaxy}), it is clear that $(u_m(y))_{y \in \Z^d}$ are identically distributed random variables. In the definition of $u_m(y)$ (see (\ref{defum})), taking the minimum over disjoint paths will guarantee us good integrability properties for these random variables.
\end{rem}

\begin{proof}[Proof of Proposition~\ref{approxam}]
We focus on proving inequality (\ref{eq:approxam1}). The inequality is obvious if $x=y$, so we assume the contrary. In order to lighten the notation slightly, we will write $\1_h(y) = \1_{\{ H_y < + \infty \}}$. 

The important observation is that in order to reach $y$ from the point $x$, the walk must travel through a neighbour of $y$ first. Letting $\tau(y)$ be the first $n$ such that $S_n \sim y$, we have
\begin{eqnarray}
\label{debut}
e^{-a(x,y)} & = & \EE_x\left[ \exp\left( -\sum_{n = 0}^{H_y - 1}  V(S_n) \rr) \ \1_h(y) \right] \notag \\
& \le & \sum_{y' \sim y} \EE_x\left[ \exp\left( -\sum_{n = 0}^{\tau(y)}  V(S_n) \rr) , \ \tau(y) < +\infty , \ S_{\tau(y)} = y'\right] .
\end{eqnarray}
Let us write $\mcl{A}_{y'}$ for the event $\{\tau(y) < +\infty , \ S_{\tau(y)} = y'\}$. We now want to go from $y'$ to $\m(y)$, but we must carefully choose along which path we will do so. Let $i \in \{1,\ldots 2d\}$ be such that
$$
\sum_{z \in \td{\gamma}^i_{y',\m(y)}} (V(z) + \log(2d))
$$
is minimal. Let us write $\mcl{B}_{y'}$ for the event that $\mcl{A}_{y'}$ is satisfied and the walk follows the path $\gamma^i_{y',\m(y)}$ starting from time $\tau(y)$, and note that
\begin{eqnarray*}
&& \EE_x\left[ \exp\left( -\sum_{n = 0}^{H_{\m(y)} - 1}  V(S_n) \rr) \ \1_h(\m(y)), \mcl{A}_{y'} \right]  \\
&& \qquad \ge \EE_x\left[ \exp\left( -\sum_{n = 0}^{\tau(y)}  V(S_n)  - \sum_{z \in \td{\gamma}^i_{y',\m(y)}} V(z) \rr), \mcl{B}_{y'} \right] \\
&& \qquad \ge \EE_x\left[ \exp\left( -\sum_{n = 0}^{\tau(y)}  V(S_n)  - \sum_{z \in \td{\gamma}^i_{y',\m(y)}} V(z) \rr), \mcl{A}_{y'} \right] \ \ll(\frac{1}{2d}\rr)^{|\gamma^i_{y',\m(y)}|} \\
&& \qquad \ge \frac{e^{-u_m(y)}}{2d} \EE_x\left[ \exp\left( -\sum_{n = 0}^{\tau(y)}  V(S_n) \rr), \mcl{A}_{y'} \right].
\end{eqnarray*}
Summing over $y' \sim y$ and using (\ref{debut}), we thus obtain
\begin{equation}
\label{milieu}
\EE_x\left[ \exp\left( -\sum_{n = 0}^{H_{\m(y)} - 1}  V(S_n) \rr) \ \1_h(\m(y)) \right] \ge \frac{e^{-u_m(y)}}{2d} e^{-a(x,y)}.
\end{equation}
The l.h.s.\ of (\ref{milieu}) is very close to being $e^{-a_m(x,y)}$, so we are almost done. Let us write $(\Theta_t)_{t \in \N}$ for the time translations acting on the space of trajectories of the random walk, so that $(\Theta_t S)_n = S_{n+t}$. 
We have
\begin{eqnarray*}
e^{-a_m(x,y)} & = & \EE_{\m(x)}\left[ \exp\left( -\sum_{n = 0}^{H_{\m(y)} - 1}  V(S_n) \rr) \ \1_h(\m(y)) \right] \\
& \ge & \EE_{\m(x)}\left[ \exp\left( -\sum_{n = 0}^{H_{\m(y)} - 1}  V(S_n) \rr) \ \1_{\{S_1 = x\}}  \1_h(\m(y)) \circ \Theta_{1} \right].
\end{eqnarray*}
Using the Markov property at time $1$, the latter becomes
\begin{equation}
\label{eqvmx}
\underbrace{e^{-V(\m(x))} \PP_{\m(x)}[S_1 = x]}_{= {e^{-Z(x)}}/{2d}} \EE_{x}\left[ \exp\left( -\sum_{n = 0}^{H_{\m(y)} - 1}  V(S_n) \rr) \  \1_h(\m(y)) \right] .
\end{equation}
Inequality (\ref{eq:approxam1}) then follows from (\ref{milieu}) and (\ref{eqvmx}). The proofs of inequalities (\ref{eq:approxam2}) and (\ref{triangleam}) are simpler, and are based on Remark~\ref{presquepreuve}.
\end{proof}

We now want to show that $a_m(x,y)$ has nice integrability properties. This starts with a lemma.
\begin{lem}
\label{integrpath}
Let $\gamma^1,\ldots, \gamma^{2d}$ be finite paths visiting pairwise disjoint sets of points. If $Z(0)$ has finite $p$-th moment for some $p > 0$, then so does
$$
\min_{1 \le i \le 2d} \ \sum_{x \in \gamma^i} V(x).
$$
\end{lem}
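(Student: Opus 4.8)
The plan is to exploit the key hypothesis that the paths $\gamma^1, \ldots, \gamma^{2d}$ visit pairwise disjoint sets of points, which makes the random variables $W_i := \sum_{x \in \gamma^i} V(x)$ mutually independent (since $V$ is an i.i.d.\ field). The quantity we want to bound is $W := \min_{1 \le i \le 2d} W_i$, and the usual trick for controlling the moments of a minimum of independent random variables is to observe that $\{W > t\}$ occurs only if \emph{all} the $W_i$ exceed $t$, so $\P[W > t] = \prod_{i=1}^{2d} \P[W_i > t] \le \P[W_{i_0} > t]$ for any single index $i_0$. The catch is that this does not immediately help unless we know that $\P[W_i > t]$ is itself small; a single $W_i$ can be a long sum of potentials and need not have any moments beyond those of $V(0)$. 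The point of taking the minimum over $2d$ disjoint paths is precisely to turn a weak tail into a strong one: each path has probability roughly $\P[V(0) > c\,t/|\gamma^i|]$-ish of being ``clean'', but raising such a probability to the $2d$-th power buys us enough decay.

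More concretely, first I would reduce to $V$ replaced by $Z$. Since $\m(x)$ picks a neighbour of $x$ achieving $V(\m(x)) = Z(x)$, we have $V(\gamma^i_k) \ge Z(\gamma^i_k)$ trivially, but that is the wrong direction; instead note $Z(x) = \min_{y \sim x} V(y) \le V(\gamma^i_k)$ for a suitable neighbour, so each $V(x)$ appearing in $W_i$ dominates the $Z$-value of any of its neighbours. Actually the cleaner route, and the one I expect the author takes, is to keep working with $V$ and note that along each path $\gamma^i$ there is a fixed positive fraction of vertices whose $Z$-value is controlled by the $V$-values on $\gamma^i$ itself. But let me instead take the direct approach: I would bound $\P[W > t]$ by a product of $2d$ tail probabilities of \emph{fixed-length} sums of i.i.d.\ potentials. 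Write $L$ for the common length bound on the paths (which may depend on the endpoints but is fixed for the lemma's purposes). For a fixed $i$, $\P[W_i > t] \le \P[\sum_{k=1}^{L} V_k > t]$ where $V_1, \ldots, V_L$ are i.i.d.\ copies of $V(0)$; this is at most $L\, \P[V(0) > t/L]$ by a union bound. Hence
\begin{equation*}
\P[W > t] = \prod_{i=1}^{2d} \P[W_i > t] \le \bigl( L\, \P[V(0) > t/L] \bigr)^{2d}.
\end{equation*}

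Now I would close the argument by integrating this tail. We have $\E[W^p] = p \int_0^\infty t^{p-1} \P[W > t]\, \d t$, and it suffices to show this is finite. The assumption is that $Z(0) = \min_{y \sim 0} V(y)$ has a finite $p$-th moment; since $Z(0)$ is a minimum over the $2d$ neighbours of the origin of i.i.d.\ copies of $V(0)$, we have $\P[Z(0) > s] = \P[V(0) > s]^{2d}$, and finiteness of $\E[Z(0)^p]$ is equivalent to $\int_0^\infty s^{p-1} \P[V(0) > s]^{2d}\, \d s < \infty$. Plugging the tail bound into the integral for $\E[W^p]$, substituting $s = t/L$, and absorbing the powers of $L$ into constants, one gets $\E[W^p] \le C_{d,p,L} \int_0^\infty s^{p-1} \P[V(0) > s]^{2d}\, \d s = C_{d,p,L}\, c_{d,p}^{-1}\, \E[Z(0)^p] < \infty$. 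The main obstacle — and the only subtle point — is making sure the dependence on the path length $L$ is harmless: the union bound costs a factor $L^{2d}$ inside the tail and the substitution costs $L^p$, but since $L$ is a fixed finite number (the paths in the statement are finite), these are just constants, and no uniformity in $L$ is claimed or needed here. (For the applications, where $L$ grows like $\|y\|_1$, one would need a version with explicit $L$-dependence, but that is a separate matter not required for this lemma.)
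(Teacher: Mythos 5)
Your proposal is correct and follows essentially the same route as the paper: independence of the path sums gives the product of tails, a union bound over the (finite, fixed) path lengths reduces each factor to a tail of $V(0)$, and the identity $\P[V(0)>s]^{2d}=\P[Z(0)>s]$ converts the product into a tail of $Z(0)$, which is then integrated. The brief digression about ``reducing to $Z$'' is a dead end, but you correctly abandon it, and your closing remark that the $L$-dependence is harmless here (though it matters in later applications) is accurate.
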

\begin{proof}
The random variables $\ll( \sum_{x \in \gamma^i} V(x) \rr)_{1 \le i \le 2d}$ being independent, we have, for any $t > 0$,
\begin{equation}
\label{eq:pmin}	
\P \ll[ \ll( \min_{1 \le i \le 2d} \ \sum_{x \in \gamma^i} V(x) \rr)^p \ge t \rr] = \prod_{1 \le i \le 2d} \P \ll[ \sum_{x \in \gamma^i} V(x) \ge t^{1/p} \rr].
\end{equation}
Let $l_i$ be the number of sites visited by $\gamma^i$, and $l = \max_i l_i$. Then for any $i$,
$$
\P \ll[ \sum_{x \in \gamma^i} V(x) \ge t^{1/p} \rr] \le l \P \ll[V(0) \ge \frac{t^{1/p}}{l} \rr].
$$
The probability in (\ref{eq:pmin}) is thus smaller than
$$
l^{2d} \P[V(0) \ge t^{1/p}/l]^{2d} = l^{2d}  \prod_{z \sim 0} \P[V(z) \ge t^{1/p}/l] = l^{2d}  \P[Z(0)^p \ge t/l^p],
$$
and the lemma is proved by integration over $t$.
\end{proof}
\begin{prop}
\label{integram}
If $Z(0)$ has finite $p$-th moment for some $p > 0$, then so do $a_m(x,y)$ and $u_m(y)$, for any $x,y \in \Z^d$. 
\end{prop}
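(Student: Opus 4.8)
The plan is to bound both $u_m(y)$ and $a_m(x,y)$ from above by expressions built out of sums of the potential along the pairwise disjoint paths furnished by Lemma~\ref{2dpaths}, and then to invoke Lemma~\ref{integrpath}. Throughout, I would use freely the elementary fact that the property of having a finite $p$-th moment is preserved under finite sums and under the addition of a finite constant.

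The bound on $u_m(y)$ is immediate. By definition, $u_m(y)$ is a sum over the $(2d)^2$ ordered pairs $(y',y'')$ of neighbours of $y$ of the quantities $\min_{1\le i\le 2d}\sum_{z\in\td\gamma^i_{y',y''}}(V(z)+\log(2d))$. Since $\|y''-y'\|_1\le 2$, Lemma~\ref{2dpaths} provides, for each such pair, $2d$ paths $\gamma^i_{y',y''}$, each of length at most $10$, whose interiors $\td\gamma^i_{y',y''}$ are pairwise disjoint; hence each of these quantities is at most $10\log(2d)+\min_{1\le i\le 2d}\sum_{z\in\td\gamma^i_{y',y''}}V(z)$, and this last minimum has finite $p$-th moment by Lemma~\ref{integrpath}. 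Summing over the finitely many pairs $(y',y'')$ gives the claim for $u_m(y)$.

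For $a_m(x,y)=a(\m(x),\m(y))$, I would first discard the trivial cases $x=y$ and $\m(x)=\m(y)$, for which $a_m(x,y)=0$; so assume $\m(x)\neq\m(y)$. Fix $i$ and constrain the simple random walk started at $\m(x)$ to follow the self-avoiding path $\gamma^i_{\m(x),\m(y)}$, whose length $l$ satisfies $l\le\|\m(y)-\m(x)\|_1+8\le\|y-x\|_1+10=:L_0$, during its first $l$ steps. This yields
\[
e(\m(x),\m(y))\ \ge\ (2d)^{-L_0}\exp\Bigl(-\sum_{z\in\gamma^i_{\m(x),\m(y)}}V(z)\Bigr),
\]
hence, taking logarithms, using $V(\m(x))=Z(x)$ and $V(\m(y))=Z(y)$, and then minimising over $i$,
\[
a_m(x,y)\ \le\ L_0\log(2d)+Z(x)+Z(y)+\min_{1\le i\le 2d}\sum_{z\in\td\gamma^i_{\m(x),\m(y)}}V(z).
\]
The first three terms on the right-hand side have finite $p$-th moment ($Z(x)$ and $Z(y)$ by assumption), so everything reduces to controlling the last one.

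This last term is the only genuine obstacle, because the paths $\td\gamma^i_{\m(x),\m(y)}$ depend on the \emph{random} points $\m(x),\m(y)$, and the potentials along them are correlated with that choice, so Lemma~\ref{integrpath} does not apply to them as they stand. The remedy is to replace the random choice by the worst deterministic one,
\[
\min_{1\le i\le 2d}\sum_{z\in\td\gamma^i_{\m(x),\m(y)}}V(z)\ \le\ \max_{x'\sim x,\ y'\sim y}\ \min_{1\le i\le 2d}\sum_{z\in\td\gamma^i_{x',y'}}V(z),
\]
and to note that each of the finitely many random variables $\min_{1\le i\le 2d}\sum_{z\in\td\gamma^i_{x',y'}}V(z)$ has finite $p$-th moment by Lemma~\ref{integrpath} (the interiors $\td\gamma^i_{x',y'}$, $1\le i\le 2d$, being pairwise disjoint), whence so does their maximum. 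This completes the argument. (If one prefers not to rely on the paths of Lemma~\ref{2dpaths} being self-avoiding, one observes that following a path of length at most $L_0$ can only inflate the relevant potential sum by the deterministic factor $L_0$, which is harmless for integrability.)
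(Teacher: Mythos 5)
Your proof is correct and follows essentially the same route as the paper: force the walk along the $2d$ pairwise disjoint paths of Lemma~\ref{2dpaths} to bound $a_m(x,y)$ (and each term of $u_m(y)$) by $Z$-terms plus a minimum of potential sums over disjoint path interiors, then invoke Lemma~\ref{integrpath}. The one place you go beyond the paper's argument is the majorization $\min_i\sum_{z\in\td\gamma^i_{\m(x),\m(y)}}V(z)\le\max_{x'\sim x,\,y'\sim y}\min_i\sum_{z\in\td\gamma^i_{x',y'}}V(z)$, which correctly handles the fact that $\m(x),\m(y)$ are random and $V$-dependent (so Lemma~\ref{integrpath} does not apply verbatim to the paths with random endpoints); the paper's proof applies the lemma directly at this point without comment, and your extra step is the right way to make that reduction rigorous.
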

\begin{proof}
Let $\gamma$ be a nearest-neighbour path from $\m(x)$ to $\m(y)$. By forcing the random walk to follow the path $\gamma$, we obtain that
\begin{eqnarray*}
a_m(x,y) & \le &  \sum_{z \in \gamma \setminus \{\m(y)\}} (V(z) + \log(2d))	\\
& \le & \log(2d) + V(\m(x)) + \sum_{z \in \td{\gamma}} (V(z) + \log(2d)),
\end{eqnarray*}
with $V(\m(x)) = Z(x)$. Lemma~\ref{2dpaths} gives us $2d$ nearest-neighbour paths $\gamma^1,\ldots,\gamma^{2d}$ linking $\m(x)$ to $\m(y)$, and such that the sets of points visited by $\td{\gamma}^1, \ldots, \td{\gamma}^{2d}$ are pairwise disjoint. We have
\begin{equation}
\label{amxyle}
a_m(x,y) \le \log(2d) + Z(x) + \min_{1 \le i \le 2d} \sum_{z \in \td{\gamma}^i} (V(z) + \log(2d)),	
\end{equation}
so the result follows from Lemma~\ref{integrpath}. The integrability properties of the random variable $u_m(y)$ are handled in the same way. 
\end{proof}
We can now conclude this section.
\begin{proof}[Proof of part (2) of Theorem~\ref{t:existpoint}]
We assume that $Z(0)$ is integrable, and let $x \in \Z^d \setminus \{0\}$. From Proposition~\ref{integram} and inequality~(\ref{triangleam}), we learn that, in the words of \cite{kingman}, the doubly indexed sequence $(a_m(nx,px))_{n < p}$ is a stationary and integrable subadditive process. It thus follows that
\begin{equation}
\label{limam}
\frac1n a_m(0,nx) \text{ converges a.s. and in } L^1 \text{ to } \inf_{n \in \N} \frac1n {\E[a_m(0,nx)]}.
\end{equation}
Let us write $\alpha(x)$ for this limit. Proposition~\ref{approxam} yields that
\begin{equation}
\label{limsupam}
\limsup_{n \to +\infty} \frac1n a(0,nx) \le \alpha(x) + \limsup_{n \to +\infty} \frac1n {Z(nx)}.	
\end{equation}
That the limsup appearing in the r.h.s.\ of (\ref{limsupam}) is equal to $0$ a.s.\ follows from the fact that $(Z(nx))$ are identically distributed and integrable random variables, through a Borel-Cantelli argument. The liminf is handled in the same way using the other inequality of Proposition~\ref{approxam}, together with the fact ensured by Proposition~\ref{integram} that $(u_m(nx))$ are identically distributed and integrable random variables. Let us see that $\alpha$ can be extended into a norm on $\R^d$, using the following two relations: for any $x,y \in \Z^d$
\begin{equation}
\label{trianglealpha}
\alpha(x+y) \le \alpha(x) + \alpha(y),
\end{equation}
(this being a consequence of (\ref{triangleam})), and for any $x \in \Z^d$ and any $q \in \N$,
\begin{equation}
\label{trianglealpha2}
\alpha(q x) = q \alpha(x).
\end{equation}
Indeed, the first identity implies that there exists $C > 0$ such that for any $y \in \Z^d$,
\begin{equation}
\label{continuitymodulus}
0 \le \alpha(y) \le C \|y\|_1.
\end{equation}
Moreover, invariance of the distribution of the potentials with respect to the spatial transformation $z \to -z$ ensures that $\alpha(x) = \alpha(-x)$, so we derive from (\ref{trianglealpha}) and (\ref{continuitymodulus}) that
\begin{equation}
\label{continuityalpha}
|\alpha(x+y) - \alpha(x)| \le C \|y \|_1.
\end{equation}
Identity (\ref{trianglealpha2}) gives us a way to extend $\alpha$ to all rationals. This extension preserves (\ref{continuityalpha}) by homogeneity, so one can finally extend $\alpha$ to $\R^d$ by continuity. The only missing element to establish that $\alpha$ is indeed a norm is to check that $\alpha(x) \neq 0$ if $x \neq 0$. Jensen's inequality ensures that 
\begin{eqnarray*}
\label{applyjensen}
\E[a_m(0,x)] & \ge & - \log \E \EE_{\m(0)}\left[ \exp\left( -\sum_{n = 0}^{H_{\m(x)} - 1}  V(S_n) \rr) \ \1_{\{H_{\m(x)} < +\infty\}} \right] \\
& \ge & -\log \EE_{\m(0)} \E \left[ \exp\left( -\sum_{y :  H_y < H_{\m(x)}}  V(y) \rr) \ \1_{\{H_{\m(x)} < +\infty\}} \right] \\
& \ge & -\log \EE_{\m(0)} \left[ \prod_{y :  H_y < H_{\m(x)}} \E\ll[ e^{-V(y)} \rr] \ \1_{\{H_{\m(x)} < +\infty\}} \right] .
\end{eqnarray*}
As any path leading from $\m(0)$ to $\m(x)$ visits at least $\|x\|_1 - 2$ vertices, we obtain 
$$
\E[a_m(0,x)] \ge -(\|x\|_1 - 2) \log \E \ll[ e^{-V(0)} \rr].
$$
Together with the $L^1$ convergence in (\ref{limam}) to $\alpha(x)$, this yields that 
$$
\alpha(x) \ge - \|x\|_1 \log \E \ll[ e^{-V(0)} \rr],
$$
which is strictly positive for any $x \neq 0$ since we assume the potentials to be non identically zero. 
\end{proof}

%
%
%
%
%
%
%
\section{Shape theorem under a moment condition}
\label{s:shapemoment}
\setcounter{equation}{0}
In this section, we will prove part (1) of Theorem~\ref{t:shape}. First, let us see that $\E[Z(0)^d] < + \infty$ is a necessary condition for (\ref{eq:shapemoment}) to hold. Recall that we write $D(x,r)$ for the $\|\cdot\|_1$-ball of centre $x$ and radius $r$.

As $\alpha$ is a norm, the set $K$ defined in (\ref{defK}), which is the unit ball for $\alpha$, contains a neighbourhood of the origin. In order for (\ref{eq:shapemoment}) to be true, it must be that, for some $\delta > 0$, one has
\begin{equation}
\label{interiorAt}
D(0,\delta) \subset t^{-1} A_t^\circ \text{ for all } t \text{ large enough}.
\end{equation}
If $Z(0)^d$ is not integrable, then for any $M > 0$, one has
$$
\sum_{x \in \Z^d} \P[Z(x) > M \|x\|_1] = + \infty.
$$
As $a(0,x) \ge Z(x)$ for $x \neq 0$, it follows from this observation that almost surely, there is an infinite number of $x$'s in $\Z^d$ such that $a(0,x) \ge M \|x\|_1$. This is in contradiction with (\ref{interiorAt}) as soon as $M > 1/\delta$.

We will now show that the condition $\E[Z(0)^d] < + \infty$ is sufficient for (\ref{eq:shapemoment}) to hold. To begin with, we extend the definition of $a$: for $x, y \in \R^d$, we set $a(x,y) = a(\lfloor x \rfloor,\lfloor y \rfloor)$, where $\lfloor x \rfloor$ is obtained from $x$ by applying the floor function on each coordinate. Note that
$$
\{ x \in \R^d : a(0,x) \le t \} = A_t^\circ,
$$
where $A_t^\circ$ was defined in (\ref{defAtcirc}). Similarly, we set $a_m(x,y) = a_m(\lfloor x \rfloor,\lfloor y \rfloor)$. The almost sure existence of radial limits, as stated in part (2) of Theorem~\ref{t:existpoint}, extends to points with rational coordinates.
\begin{prop}
\label{radiallimits}
Assume that $Z(0)$ is integrable. For any $x \in \Q^d$, one has
\begin{equation}
\label{radiala}
\frac1n a_m(0,nx) \xrightarrow[n \to +\infty]{\text{a.s.}} \alpha(x),
\end{equation}
and the same is true if $a_m$ is replaced by $a$.
\end{prop}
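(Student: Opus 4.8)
The plan is to reduce the statement about rational $x$ to the already-established radial limit for integer $x$ (part (2) of Theorem~\ref{t:existpoint}, namely $\frac1n a_m(0,nx) \to \alpha(x)$ a.s.\ for $x \in \Z^d$), together with the continuity estimate \eqref{continuityalpha} for $\alpha$. First I would write a rational point as $x = p/q$ with $p \in \Z^d$ and $q \in \N$. Along the subsequence $n = kq$, $k \in \N$, we have $a_m(0, kq x) = a_m(0, kp)$, and Theorem~\ref{t:existpoint}(2) gives $\frac{1}{kq} a_m(0,kp) \to \frac1q \alpha(p) = \alpha(x)$ almost surely, using homogeneity \eqref{trianglealpha2}. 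So the content is to fill the gaps between consecutive multiples of $q$: for general $n$, write $n = kq + r$ with $0 \le r < q$, and control the difference $a_m(0,nx) - a_m(0,kqx)$.

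The key tool for this is the triangle inequality \eqref{triangleam} together with the integrability of $a_m$ and of the increments. Concretely, $|a_m(0,nx) - a_m(0,kqx)| \le a_m(kqx, nx) + (\text{the other direction})$, and since $\|nx - kqx\|_1 = \|rx\|_1 \le (q-1)\|x\|_1$ is bounded, $a_m(kqx, nx)$ is a random variable whose law depends only on the bounded integer vector $\lfloor nx \rfloor - \lfloor kqx \rfloor$ (of which there are finitely many possibilities), and each such variable has finite first moment by Proposition~\ref{integram}. Hence $\frac1n a_m(kqx, nx) \to 0$ almost surely: indeed $\max_{0 \le r < q} a_m(\lfloor kqx\rfloor, \lfloor (kq+r)x\rfloor)$ is dominated by a sum of finitely many identically distributed integrable variables indexed by $k$, so a Borel–Cantelli argument (exactly as used for $Z(nx)$ and $u_m(nx)$ in the proof of part (2) of Theorem~\ref{t:existpoint}) shows it is $o(n)$ a.s. Combining, $\frac1n a_m(0,nx) \to \alpha(x)$ a.s.

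For the statement with $a$ in place of $a_m$, I would invoke Proposition~\ref{approxam}: inequalities \eqref{eq:approxam1} and \eqref{eq:approxam2} give
$$
a_m(0,nx') - V(0) - Z(nx') - 2\log(2d) \le a(0,nx') \le a_m(0,nx') + Z(0) + 2\log(2d) + u_m(nx'),
$$
where $nx'$ should be read as $\lfloor nx \rfloor$; dividing by $n$ and using that $Z(nx')$ and $u_m(nx')$ are identically distributed integrable random variables (Proposition~\ref{integram}), the Borel–Cantelli argument again kills these correction terms, so $\frac1n a(0,nx) \to \alpha(x)$ a.s.\ as well. The main obstacle — really the only nontrivial point — is making the interpolation between consecutive multiples of $q$ rigorous with the floor functions in place: one must check that $\lfloor (kq+r)x \rfloor - \lfloor kq x\rfloor$ ranges over a finite deterministic set as $k$ varies (which holds because $qx = p \in \Z^d$, so this difference equals $\lfloor rx + \{kqx\}\rfloor$ with a periodic fractional part, hence takes finitely many values), and that the relevant increment variables, though not independent across $k$, are identically distributed, which is all Borel–Cantelli needs. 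Everything else is a routine reprise of the arguments already carried out in Section~\ref{s:ptpmoment}.
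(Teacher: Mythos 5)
Your proof is correct and follows essentially the same route as the paper, which for the $a_m$ statement simply defers to the Borel--Cantelli interpolation argument of Cox--Durrett (p.~592) — i.e.\ exactly your decomposition $n=kq+r$ with the subadditive limit along $n=kq$ and integrable, identically distributed increments filling the gaps — and for the passage to $a$ invokes the comparison inequalities of Proposition~\ref{approxam} as you do. (One cosmetic slip: in your displayed sandwich the correction terms from \eqref{eq:approxam1} and \eqref{eq:approxam2} are attached to the wrong sides — the lower bound on $a(0,\lfloor nx\rfloor)$ should carry $-Z(0)-2\log(2d)-u_m(\lfloor nx\rfloor)$ and the upper bound $+V(0)+Z(\lfloor nx\rfloor)+2\log(2d)$ — but since all these terms are handled identically, the conclusion is unaffected.)
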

\begin{proof}
To see that the statement of the proposition is true for $a_m$, one can proceed as in \cite[p.~592]{cd} (that is, through a Borel-Cantelli argument relying on integrability of $a_m$). In order to obtain the result for $a$, one can use the inequalities in (\ref{triangleam}), and reason as in the proof of part (2) of Theorem~\ref{t:existpoint}. 
\end{proof}
Let 
$$
A_{m,t}^\circ = \{ x \in \R^d : a_m(0,x) \le t \}.
$$
The central ingredient of the proof of part (1) of Theorem~\ref{t:shape} is the following control of the tail probability of $a_m(0,x)$, which parallels \cite[Lemma~3.3]{cd} and \cite[Lemma~7]{zer}.
\begin{prop}
\label{controltail}
Assume that $\E[Z(0)^d] < + \infty$. There exist a constant $c_1 > 0$ and a random variable $X$ with finite $d$-th moment such that, for any $x \in \R^d$ and any $t > 0$,
$$
\P[a_m(0,x) > t] \le c_1 \frac{\|x\|_1^{2d}}{(t-c_1 \|x\|_1)_+^{4d}} +  \P[X > t],
$$
where for $s \in \R$, we write $s_+ = \max(0,s)$.
\end{prop}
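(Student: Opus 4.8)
The plan is to bound $a_m(0,x)$ by the cost of a concatenation of ``good'' path-segments, each of which contributes $O(1)$ on average, and to pay a polynomial price when such good segments cannot be found near the straight line from $0$ to $x$. First I would fix the integer point $\ov{x} = \lfloor x \rfloor$ and discretize the segment $[0,\ov{x}]$ into roughly $\|\ov{x}\|_1/L$ consecutive waypoints $x_0 = 0, x_1, \dots, x_N = \ov{x}$ at spacing of order $L$, where $L$ is a scale parameter to be optimized (eventually $L \asymp t/\|x\|_1$, which is why the bound is phrased in terms of $t - c_1\|x\|_1$). By subadditivity (\ref{triangleam}), $a_m(0,\ov{x}) \le \sum_{k=0}^{N-1} a_m(x_k, x_{k+1})$. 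Each $a_m(x_k,x_{k+1})$ depends only on the potential in a bounded neighbourhood of the box $B(x_k, cL)$, and by Proposition~\ref{integram} (applied to the $p=d$ case, which holds since $\E[Z(0)^d]<\infty$) it has finite $d$-th moment; moreover inequality (\ref{amxyle}) expresses it as $Z(x_k) + \log(2d) + \min_i \sum_{z \in \td\gamma^i} (V(z)+\log(2d))$, a sum over disjoint paths of length $O(L)$, so its mean is $O(L)$ and, crucially, we can split it into a truncated part bounded by $c L$ (on a high-probability event) plus an overshoot.

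The key probabilistic step is then a ``renormalization/greedy-lattice-animal'' type estimate: I would call a waypoint $x_k$ \emph{bad} if $a_m(x_k,x_{k+1}) > cL$ for the appropriate constant $c$, estimate $\P[x_k \text{ bad}]$ via (\ref{amxyle}), Lemma~\ref{integrpath}, and a union bound as small (it goes to $0$ as $L \to \infty$, since $\E[a_m]/L \to 0$ along a subsequence — or more simply since $\P[a_m(x_k,x_{k+1}) > cL] \le c'L^{2d}/(cL - c'L)^{4d} \cdot(\text{something})$, obtained by Markov applied to the $2d$-th moment; the $2d$ comes from the product structure in (\ref{eq:pmin}) with $p = 2d$). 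Because consecutive segments at spacing $L$ are built from disjoint blocks of potential, bad waypoints are ``sparse but possibly clustered'', and I would bound the total excess $\sum_{k : x_k \text{ bad}} a_m(x_k,x_{k+1})$. Here one either (a) uses the greedy-lattice-animal machinery of \cite{cd}/\cite{zer} — the contribution of bad sites along any lattice path is controlled by a random variable $X$ with finite $d$-th moment plus a term linear in $\|x\|_1$ — or (b) more crudely, sums $d$-th moments: $\P[\sum_{k : \text{bad}} a_m > t] \le \sum_k \P[a_m(x_k,x_{k+1}) > t/\text{(number of bad)}]$, but this is too lossy, so I expect the lattice-animal route (or the \cite[Lemma~3.3]{cd} argument it parallels) is the one to follow. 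The point is that along the straight line the number of waypoints is $N \asymp \|x\|_1/L$, so if \emph{all} waypoints are good we get $a_m(0,\ov{x}) \le c N L = c\|x\|_1$; the polynomial term $c_1 \|x\|_1^{2d}/(t - c_1\|x\|_1)_+^{4d}$ accounts for the probability that the excess from bad waypoints exceeds $t - c_1\|x\|_1$, and the additive $\P[X>t]$ absorbs the contribution of the single segment containing $0$ (whose potential is not translation-averaged) together with a site-localized defect near the origin.

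Concretely, the steps in order: (1) reduce to integer $x$ and set up the waypoint chain with scale $L = (t - c_1\|x\|_1)/(2cN)$ or similar, chosen so that $cNL \le (t+c_1\|x\|_1)/2 < t$ on the good event; (2) use (\ref{amxyle}) + Lemma~\ref{integrpath} with $p = 2d$ to get $\P[a_m(x_k,x_{k+1}) > s] \le C(L/s)^{4d} L^{2d}$-type tail for $s \ge CL$, hence control of high moments; (3) run the lattice-animal / Borel--Cantelli argument over the $N$ waypoints to show that $\P[\sum a_m(x_k,x_{k+1}) > t] \le c_1\|x\|_1^{2d}/(t-c_1\|x\|_1)_+^{4d} + \P[X > t]$, where $X$ collects the near-origin defect and has finite $d$-th moment by Proposition~\ref{integram}; (4) conclude via subadditivity. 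The main obstacle is step (3): making the clustering of bad waypoints harmless requires either importing the greedy-lattice-animal estimates (which in this setting are available since the per-site weights $a_m(x_k,x_{k+1})$ are $L$-dependent but have uniformly controlled $2d$-th moments after rescaling) or re-deriving the \cite[Lemma~3.3]{cd} bound in the present site-based, possibly-infinite-potential framework — in particular one must check that the exponents ($2d$ in the numerator, $4d$ in the denominator) come out exactly as stated, which forces the specific choice $p = 2d$ in the moment input and a careful accounting of the $\|x\|_1^{2d}$ from summing over $\asymp \|x\|_1^{?}$ possible animal shapes versus the $L^{-4d}$ decay per segment.
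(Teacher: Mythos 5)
Your overall architecture (waypoints along the line, subadditivity, endpoint terms absorbed into $X$, a Chebyshev-type tail for the bulk) is in the right spirit, but two of your core steps fail, and they are precisely the two points the paper's proof is designed to handle differently.

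First, your claim that ``each $a_m(x_k,x_{k+1})$ depends only on the potential in a bounded neighbourhood of $B(x_k,cL)$'' is false: $a_m$ averages over \emph{all} trajectories of the walk, which may visit arbitrarily distant sites, so consecutive segment costs are not independent (nor finitely dependent) and no variance/moment bound for their sum follows. The paper resolves this by introducing a localized surrogate $a_m^L(\td{L}y,\td{L}z)$, defined by killing the walk at the exit time of the rectangle $B(\td{L}y,L)\cup B(\td{L}z,L)$; this is an upper bound for $a_m(\td{L}y,\td{L}z)$, is a genuinely local function of the potential, and still has finite $d$-th moment once $L$ is large enough (in fact $L=5$ works, fixed once and for all — your idea of optimizing $L\asymp t/\|x\|_1$ is not needed; the $(t-c_1\|x\|_1)_+$ simply comes from centering $A^i_2$ at its mean, which is $O(\|x\|_1)$). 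Second, your proposed source of the exponents — taking $p=2d$ in Lemma~\ref{integrpath} — is not available: the hypothesis is only $\E[Z(0)^d]<+\infty$, so you control $d$-th moments, not $2d$-th moments, of a single segment. The exponents $\|x\|_1^{2d}/t^{4d}$ actually arise from a different mechanism: one runs the waypoint chain along each of the $2d$ \emph{disjoint} macroscopic paths of Lemma~\ref{2dpaths} (at the renormalized scale $\lfloor x/\td{L}\rfloor$), bounds $a_m(0,x)\le \sum_i A^i_1+\min_i A^i_2+\sum_i A^i_3$, applies plain Chebyshev (second moments only, which exist since $d\ge 2$) to each $A^i_2$ to get a tail $\le 3C\|x\|_1/(t-C\|x\|_1)^2$, and then uses independence of the $2d$ disjoint paths to multiply these tails, yielding $\|x\|_1^{2d}/(t-c_1\|x\|_1)^{4d}$. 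Your single-chain greedy-lattice-animal route would need moment inputs you do not have and would not naturally produce this product structure; the disjoint-paths trick is the missing idea. Your treatment of the endpoint terms as a uniformly dominated $X$ with finite $d$-th moment is correct and matches the paper.
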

\begin{proof}
Let $L$ be a positive integer, and $\td{L} = 2L+1$. Note that $(B(\td{L}z,L))_{z \in \Z^d}$ is a partition of $\Z^d$. For $y \sim z \in \Z^d$, let us write $T^L_{y,z}$ for the exit time from the ``rectangle'' $R^L(y,z) = B(\td{L}y,L) \cup B(\td{L}z,L)$. For such $y$ and $z$, we define 
$$
a_m^L(\td{L}y,\td{L}z) = - \log \EE_{\m(\td{L}y)}\left[ \exp\left( -\sum_{n = 0}^{H_{\m(\td{L}z)} - 1}  V(S_n) \rr) \ \1_{\{H_{\m(\td{L}z)} < T^L_{y,z}\}} \right].
$$	
This is clearly an upper bound for $a_m(\td{L}y,\td{L}z)$. We want to check that for $L$ large enough, $a_m^L$ has finite $d$-th moment, by adapting the proof of Proposition~\ref{integram}. In order that (\ref{amxyle}) still holds for $a_m^L(Ly,Lz)$ instead of $a_m(x,y)$, it suffices to have $L$ large enough so that the paths $\gamma^1,\ldots, \gamma^{2d}$ that appear in (\ref{amxyle}) are all contained in the rectangle $R^L(y,z)$. This is possible, since we learn from Lemma~\ref{2dpaths} that the paths can be chosen so that the length of the longest one does not exceed $\|\m(\td{L}z) - \m(\td{L}y)\|_1 + 8$ (in fact, choosing $L = 5$ is sufficient). For such $L$, we then see that $a_m^L(Ly,Lz)$ has finite $d$-th moment by applying Lemma~\ref{integrpath}. 

Let $\gamma^1,\ldots,\gamma^{2d}$ be the $2d$ paths obtained from Lemma~\ref{2dpaths} that connect $0$ to $\lfloor x/\td{L} \rfloor$. We write $\gamma^i = (\gamma^i_0,\ldots, \gamma^i_{l^i})$, and assume that $x$ is far enough from $0$ so that for any $i$, $l^i \ge 2$. Applying (\ref{triangleam}) iteratively, we have
\begin{equation}
\label{decompamamL}
a_m(0,x) \le a_m(0,\td{L}\gamma^i_1) + \sum_{k = 1}^{l_i-2} a_m^L(\td{L} \gamma^i_k,\td{L} \gamma^i_{k+1}) + a_m(\td{L} \gamma^i_{l_i-1},x).
\end{equation}
We rewrite the r.h.s.\ of (\ref{decompamamL}) as $A^i_1 + A^i_2 + A^i_3$, with obvious identifications. Note that in fact,
\begin{equation}
\label{decompamA'}
a_m(0,x) \le \min_{1 \le i \le 2d} \ll( A^i_1 + A^i_2 + A^i_3 \rr)  \le \sum_{i = 1}^{2d} A^i_1 + \min_{1 \le i \le 2d} A^i_2 + \sum_{i = 1}^{2d} A^i_3,
\end{equation}
and again we rewrite the r.h.s.\ of (\ref{decompamA'}) as $A'_1 + A'_2 + A'_3$. One has
$$
\P[a_m(0,x) > 2 t] \le \P[A'_2 > t] + \P[A'_1 + A'_3 > t].
$$
The random variable $A'_1 + A'_3$ has finite $d$-th moment, so it is tempting to take this as the $X$ appearing in the proposition, but we need instead to find a random variable that does not depend on $x$. Let us see for instance that $a_m(\td{L} \gamma^i_{l_i-1},x)$ is stochastically dominated by a random variable with finite $d$-th moment, uniformly over $x$. An examination of the proof of Lemma~\ref{integrpath} shows that for this to be true, it suffices to find $2d$ paths connecting $\m(\td{L}\gamma^i_{l_i-1})$ to $\m(\lfloor x \rfloor)$ whose lengths are bounded uniformly over $x$. But this is indeed possible, as we can always take paths of length smaller than $\|\m(\td{L}\gamma^i_{l_i-1})-\m(\lfloor x \rfloor)\|_1 +8 \le 2\td{L}d+8$.

Let us now turn to the estimation of $\P[A'_2 > t] = \prod_{i = 1}^{2d} \P[A^i_2 > t]$. The construction of $a_m^L$ was made in such a way that if $\{y,z\}$ and $\{y',z'\}$ are disjoint pairs of neighbouring sites, then $a_m^L(\td{L}y,\td{L}z)$ and $a_m^L(\td{L}y',\td{L}z')$ are independent random variables. Moreover, it is clear for the same reason as above that one can find a constant $C$ such that $\E[(a_m^L(\td{L}y,\td{L}z))^2] \le C$ uniformly over all $y,z \in \Z^d$ with $y \sim z$. Hence,
$$
\var\ll[A^i_2 \rr] \le 3 l_i C \le 3 (\|x\|_1 + 8) C,
$$
where $\var$ denotes the variance with respect to $\P$. Naturally, the first moment of $a_m^L(\td{L}y,\td{L}z)$ is also uniformly bounded over all $y,z \in \Z^d$ with $y \sim z$, so possibly enlarging $C$, we have
$$
\E[A^i_2] \le C l_i \le C (\|x\|_1 + 8).
$$
As a consequence,
$$
\P[A^i_2 > t + C(\|x\|_1 + 8)] \le \frac{3 (\|x\|_1 + 8) C}{t^2},
$$
which leads to the claim of the proposition.
\end{proof}
For $x \in \R^d$ and $\delta > 0$, we define 
$$
\mcl{D}_{\delta,x} = \Z^d \cap \bigcup_{t \ge 0} D(t x,\delta t),
$$
and if $y \in \mcl{D}_{\delta,x}$, we let
$$
\sigma_{\delta,y} = \inf \{t \ge 0 : y \in D(t x,\delta t) \}.
$$
Since the function $t \mapsto \|y-t x\|_1 - \delta t$ is continuous, either the infimum above is $0$, or 
\begin{equation}
\label{equalsigma}
\|y - \sigma_{\delta,y} x\|_1 =\delta  \sigma_{\delta,y}.
\end{equation}
If the infimum is $0$, then it must be that $y = 0$, and in this case \eqref{equalsigma} also holds.
\begin{prop}
\label{coverball}
Assume that $\E[Z(0)^d] < + \infty$. 
\begin{enumerate}
\item For any $\eps > 0$, there exists $\delta_1 > 0$ such that, for any $\delta \le \delta_1$, any $x \in \Q^d$ with $\alpha(x) \le 1 - 3 \eps$, for all but a finite number of $y \in \mcl{D}_{\delta,x}$, one has
\begin{equation}
\label{amepssigma}
a_m(0,y) \le (1- \eps) \sigma_{\delta,y}.
\end{equation}
\item For any $\eps > 0$, one has
\begin{equation}
\label{susbsetAmt}
\P[(1-\eps)K \subset t^{-1} A_{m,t}^\circ \text{ for all } t \text{ large enough}] = 1,
\end{equation}
where $K$ was defined in (\ref{defK}).
\item For any $\eps > 0$, (\ref{susbsetAmt}) holds with $A_{m,t}^\circ$ replaced by $A_{t}^\circ$.
\end{enumerate}
\end{prop}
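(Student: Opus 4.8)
The plan is to prove the three parts in order, the first being the crux and the other two following by soft arguments.

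\emph{Part (1).} Fix $\eps > 0$ and $x \in \Q^d$ with $\alpha(x) \le 1 - 3\eps$, and pick $q \in \N$ with $qx \in \Z^d$. For $y \in \mcl{D}_{\delta,x}$ write $\sigma = \sigma_{\delta,y}$ and $n = \lfloor \sigma/q \rfloor$, so that $nq \le \sigma$. By the subadditivity (\ref{triangleam}),
\[
a_m(0,y) \le a_m(0,nqx) + a_m(nqx,y).
\]
For the first term, Proposition~\ref{radiallimits} gives $\tfrac{1}{nq}a_m(0,nqx) \to \alpha(x) \le 1-3\eps$ almost surely, so on an event of probability one, $a_m(0,nqx) \le (1-2\eps)nq \le (1-2\eps)\sigma$ once $\sigma$ is large. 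For the second term, (\ref{equalsigma}) gives $\|y - nqx\|_1 \le \|y-\sigma x\|_1 + (\sigma - nq)\|x\|_1 \le \delta\sigma + q\|x\|_1 \le 2\delta\sigma$ for $\sigma$ large; then by translation invariance and Proposition~\ref{controltail} applied to $a_m(0,y-nqx)$ with $t = \eps\sigma \ge \eps nq$, provided $\delta = \delta_1$ is small enough (depending only on $\eps$ and the universal $c_1$, say $\delta_1 = \eps/(8c_1)$) that the denominator $(\eps\sigma - c_1\|y-nqx\|_1)_+$ stays of order $\eps nq$, one obtains
\[
\P[a_m(nqx,y) > \eps\sigma_{\delta,y}] \le C \frac{\delta^{2d}}{n^{2d}} + \P[X > \eps nq].
\]
Grouping the $y \in \mcl{D}_{\delta,x}$ with $\lfloor\sigma_{\delta,y}/q\rfloor = n$ — there are at most $C'n^{d-1}$ of them — and summing, the total is finite since $\sum_n n^{d-1} n^{-2d} < \infty$ and since $\E[X^d] < \infty$ gives $\sum_n n^{d-1}\P[X > \eps nq] < \infty$. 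Borel--Cantelli then shows $a_m(nqx,y) \le \eps\sigma_{\delta,y}$ for all but finitely many $y \in \mcl{D}_{\delta,x}$, almost surely; combined with the bound on the first term this is (\ref{amepssigma}). Note $\delta_1$ does not depend on $x$.

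\emph{Part (2).} The key reduction is that part (1) yields $\limsup_{\|y\|_1 \to \infty,\, y \in \Z^d} a_m(0,y)/\alpha(y) \le 1$ almost surely. Given $\eps > 0$, pick $\eps_1$ and $\delta \le \delta_1$ small; using density of $\Q^d$, continuity of the norm $\alpha$, and compactness of the Euclidean unit sphere, cover the sphere by finitely many open cones of directions $\mcl{C}_1,\ldots,\mcl{C}_N$ with each $\mcl{C}_j$ contained in the directions spanned by $\mcl{D}_{\delta,x_j}$ for some $x_j \in \Q^d$ satisfying $(1-\eps_1)(1-\eps) + C\delta \le \alpha(x_j) \le 1-3\eps_1$ (this interval is nonempty for $\eps_1$ small relative to $\eps$ and $\delta$ small). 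Then every $y \in \Z^d$ with $\|y\|_1$ large lies in some $\mcl{D}_{\delta,x_j}$, outside the corresponding finite exceptional set of part (1), so $a_m(0,y) \le (1-\eps_1)\sigma_{\delta,y}$; and $\|y - \sigma_{\delta,y}x_j\|_1 = \delta\sigma_{\delta,y}$ together with (\ref{continuitymodulus}) gives $\sigma_{\delta,y} \le \alpha(y)/(\alpha(x_j) - C\delta)$, hence $a_m(0,y) \le (1-\eps_1)\alpha(y)/(\alpha(x_j)-C\delta) \le \alpha(y)/(1-\eps)$ by the choice of $\alpha(x_j)$. Translating back: the finitely many lattice points of bounded norm have $a_m(0,\cdot)$ a.s.\ bounded, and for the rest, if $y = \lfloor tx\rfloor$ with $\alpha(x) \le 1-\eps$ then $\alpha(y) \le t(1-\eps) + Cd$, so $a_m(0,y) \le t + Cd/(1-\eps)$; choosing a large enough (random, a.s.\ finite) threshold $t_0$ and a slightly smaller $\eps$ in the final statement, one gets $a_m(0,\lfloor tx\rfloor) \le t$ for all $t \ge t_0$ and all $x$ with $\alpha(x) \le 1-\eps$, which is (\ref{susbsetAmt}).

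\emph{Part (3).} Here $\E[Z(0)^d] < \infty$ forces $\P[V(0)=\infty] = 0$ (since $\P[Z(0)=\infty] = \P[V(0)=\infty]^{2d}$). By (\ref{eq:approxam2}), $a(0,y) \le a_m(0,y) + V(0) + Z(y) + 2\log(2d)$, and since the number of $y$ with $\|y\|_1 = k$ is of order $k^{d-1}$ while $\sum_k k^{d-1}\P[Z(0) > \eps k] < \infty$, Borel--Cantelli gives $Z(y) \le \eps\|y\|_1$ for all but finitely many $y$. Combining with part (2) yields $\limsup_{\|y\|_1 \to \infty} a(0,y)/\alpha(y) \le 1$, and the same translation as in part (2) gives (\ref{susbsetAmt}) with $A_t^\circ$ in place of $A_{m,t}^\circ$.

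\emph{Main obstacle.} The difficulty is concentrated in part (1), specifically the interplay between Proposition~\ref{controltail} and the geometry of $\mcl{D}_{\delta,x}$: the tail bound is only effective when the $\|\cdot\|_1$-norm of the first argument is small compared to the level $t$, which is precisely why one must first ``anchor'' near $\sigma_{\delta,y}x$ using the radial limit and apply the tail bound only to the short residual segment $a_m(nqx,y)$, and why $\delta$ must be chosen small as a function of the universal constant $c_1$ so the denominator in Proposition~\ref{controltail} does not degenerate. The remaining care is bookkeeping: making the Borel--Cantelli sums converge with the correct powers of $n$, and handling uniformity over $x$ in part (2) by isolating lattice points of bounded norm.
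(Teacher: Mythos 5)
Your proof is correct and follows essentially the same route as the paper: anchor $y$ at a point on the ray through $x$, control the anchor via the radial limits of Proposition~\ref{radiallimits}, control the residual segment via the tail bound of Proposition~\ref{controltail} with $\delta$ chosen small relative to $\eps/c_1$, and sum for Borel--Cantelli; parts (2) and (3) then follow by the same compactness covering (your cones of directions and the paper's covering of $(1-3\eps)K$ by small $\|\cdot\|_1$-balls are equivalent bookkeeping) together with the comparison (\ref{eq:approxam2}). The only cosmetic differences are that you anchor at the integer multiple $nqx$ rather than at $\sigma_{\delta,y}x$, and that you phrase part (2) as the statement $\limsup_{\|y\|_1\to\infty} a_m(0,y)/\alpha(y) \le 1$ rather than running the paper's contradiction argument with sequences $y_n, t_n$.
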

\begin{proof}
We follow the line of argument given in \cite[p.~594-595]{cd}. Let $\eps > 0$, $x \in \Q^d$ such that $\alpha(x) \le 1 - 3 \eps$, and $\delta \le \eps/(2c_1)$, where $c_1$ is the constant appearing in Proposition~\ref{controltail}. By Proposition~\ref{radiallimits}, we already know that a.s.,
\begin{equation}
\label{allbutafew}
\begin{array}{l}
\text{for all but a finite number of } y \in \mcl{D}_{\delta,x}, \\
a_m(0,\sigma_{\delta,y} x) \le (\alpha(x) + \eps) \sigma_{\delta,y}.
\end{array}
\end{equation}
We need to control the size of $a_m(\sigma_{\delta,y} x,y)$. Note that $\|y - \sigma_{\delta,y} x \|_1 = \delta \sigma_{\delta,y}$. With our particular choice of $\delta$, equation~\eqref{equalsigma} and Proposition~\ref{controltail} imply that
\begin{equation}
\label{rhssums}
\P[a_m(\sigma_{\delta,y} x,y) > \eps \sigma_{\delta,y}] \le c_1 \frac{(\delta \sigma_{\delta,y})^{2d}}{(c_1 \delta \sigma_{\delta,y})^{4d}} + \P[X > \eps \sigma_{\delta,y}].
\end{equation}
Now, using a triangle inequality on \eqref{equalsigma}, we learn that
\begin{equation}
\label{sigmage}
\sigma_{\delta,y} \ge \frac{\|y\|_1}{\delta + \|x\|_1},	
\end{equation}
so up to a constant, the sum over all $y \in \Z^d$ of the r.h.s.\ of (\ref{rhssums}) is bounded by
$$
\sum_{y \in \Z^d} \ll( \frac{\|y\|_1}{\delta + \|x\|_1} \rr)^{-2d} + \P\ll[ X > \frac{\eps \|y\|_1}{\delta + \|x\|_1} \rr].
$$
This sum is finite, which thus implies that, for all but a finite number of $y \in \mcl{D}_{\delta,x}$, one has
$$
a_m(\sigma_{\delta,y} x,y) \le \eps \sigma_{\delta,y}.
$$
Together with (\ref{allbutafew}) and inequality (\ref{triangleam}), this implies that for all but a finite number of $y \in \mcl{D}_{\delta,x}$, one has
$$
a_m(0,y) \le (\alpha(x) + 2\eps) \sigma_{\delta,y} \le (1-\eps) \sigma_{\delta,y},
$$
and thus part (1) of the proposition is proved.

We now turn to part (2). First, let us see that if $\delta_2$ is small enough, then 
\begin{equation}
\label{localcover0}
\P[D(0,\delta_2) \subset t^{-1} A_{m,t}^\circ \text{ for all } t \text{ large enough}] = 1.	
\end{equation}
Were this not true, with non-zero probability, there would exist two sequences $y_n \in \R^d$, $t_n \to + \infty$ such that
$$
a_m(0,y_n) \ge t_n \quad \text{ and } \quad \|y_n\|_1 \le \delta_2 t_n.
$$
In particular, on this event, we would have $\|y_n\|_1 \to +\infty$ and
\begin{equation}
\label{eq:localcov0}
a_m(0,\lfloor y_n \rfloor) = a_m(0,y_n) \ge \frac{\|y_n\|_1}{\delta_2} \ge \frac{\|\lfloor y_n \rfloor\|_1 - d}{\delta_2}.
\end{equation}
But using the tail estimate on $a_m(0,y)$ from Proposition~\ref{controltail}, we can ascertain that with probability one, for all but a finite number of $y \in \Z^d$, one has
\begin{equation}
\label{2c1}
a_m(0,y) \le 2c_1 \|y\|_1.	
\end{equation}
This would contradict (\ref{eq:localcov0}) as soon as $\delta_2 < 1/(2 c_1)$, so identity (\ref{localcover0}) holds for such $\delta_2$. 

Let $\eps > 0$. We claim that, with this $\delta_2$ now fixed, we can find some $\delta > 0$ small enough such that
\begin{equation}
\label{localcoverrest}
\begin{array}{l}
\text{if } x \in \Q^d \text{ is such that } \|x\|_1 \ge \delta_2 \text{ and } \alpha(x) \le 1-3\eps, \text{ then} \\
\displaystyle{ \P[D(x,\delta) \subset t^{-1} A_{m,t}^\circ \text{ for all } t \text{ large enough}] = 1.	   }
\end{array}
\end{equation}
Indeed, if this is not true for some $x$, then with non-zero probability, one can construct sequences $y_n \in \R^d$, $t_n \to +\infty$ such that
\begin{equation}
\label{amxn}
a_m(0,y_n) \ge t_n \quad \text{ and } \quad \|y_n - t_n x \|_1 \le \delta t_n.
\end{equation}
A triangle inequality on the second part of (\ref{amxn}) yields that
\begin{equation}
\label{obs1}
t_n \ge \frac{\|y_n\|_1}{\delta + \|x\|_1},
\end{equation}
while the ``converse'' triangle inequality applied to \eqref{equalsigma} tells us that
$$
\sigma_{\delta,\lfloor y_n \rfloor} \le \frac{\|\lfloor y_n \rfloor\|_1}{\|x\|_1 - \delta}\le \frac{\| y_n \|_1+d}{\|x\|_1 - \delta},
$$
provided $\delta < \|x\|_1$. We can rewrite the latter inequality as
\begin{equation}
\label{obs2}
\| y_n \|_1 \ge (\|x\|_1 - \delta)\sigma_{\delta,\lfloor y_n \rfloor} - d.
\end{equation}
Observations \eqref{obs1} and \eqref{obs2} lead us to
$$
a_m(0, \lfloor y_n \rfloor) = a_m(0,y_n) \ge \frac{\|y_n\|_1}{\delta + \|x\|_1} \ge \sigma_{\delta,\lfloor y_n \rfloor}  \frac{\|x\|_1 - \delta}{\delta + \|x\|_1} - \frac{d}{\delta}.
$$
Note also that
\begin{equation}
\label{whichdelta}
\frac{\|x\|_1 - \delta}{\delta + \|x\|_1} = 1 - \frac{2\delta}{\|x\|_1 + \delta} \ge 1 - \frac{2\delta}{\delta_2 + \delta}.	
\end{equation}
Let $\delta_1$ be given by part (1) of the proposition. We choose $\delta \le \delta_1$ small enough such that the r.h.s.\ of (\ref{whichdelta}) is larger than $1-\eps/2$. With this particular choice of $\delta$, we have shown that, if (\ref{localcoverrest}) fails to hold, then for some $x \in \Q^d$ with $\alpha(x) \le 1 - 3 \eps$, with non-zero probability there exists a sequence $y_n \in \R^d$ with $\|y_n\|_1 \to \infty$ and such that
$$
a_m(0, \lfloor y_n \rfloor) \ge \ll(1-\frac{\eps}{2}\rr) \sigma_{\delta,\lfloor y_n \rfloor} - \frac{d}{\delta}.
$$
Since $\lim_{\|y\| \to \infty} \sigma_{\delta,y}  = + \infty$ (see \eqref{sigmage}), this contradicts part (1) of the proposition, and thus (\ref{localcoverrest}) holds. To conclude, we note that it is possible to cover $(1-3\eps)K$ by the union of $D(0,\delta_2)$ and a finite numbers of balls each having radius $\delta$ and some centre $x$ as in (\ref{localcoverrest}), so part (2) of the proposition follows from (\ref{localcover0}) and (\ref{localcoverrest}).

Let us now turn to part (3). From Proposition~\ref{approxam}, we learn that
\begin{equation*}
a(0,y) \le a_m(0,y) + V(0) + Z(y) + 2 \log(2d).	
\end{equation*}
Since $Z(y)$ are identically distributed random variables with finite $d$-th moment, using inequality (\ref{sigmage}), it comes that whatever the value of $\delta > 0$, for all but a finite number of $y \in \Z^d$, one has
$$
V(0) + Z(y) + 2 \log(2d) \le \min\ll( \frac{\eps}{4} \sigma_{\delta,y} , \|y\|_1 \rr).
$$
As a consequence, part (1) of the proposition also holds if (\ref{amepssigma}) is replaced by 
$$
a(0,y) \le \ll (1- \frac{3 \eps}{4} \rr) \sigma_{\delta,y},
$$
and similarly, we get from (\ref{2c1}) that, for al but a finite number of $y \in \Z^d$, one has
$$
a(0,y) \le (2 c_1 + 1) \|y\|_1.
$$
These are sufficient ingredients to make the proof of part (2) of the proposition carry over to $A_t^\circ$, so the proof is complete.
\end{proof}

\begin{prop}
\label{shape2borne}
Assume that $\E[Z(0)^d] < + \infty$. 
\begin{enumerate}
\item For any $\eps > 0$, there exists $\delta_3 > 0$ such that, for any $\delta \le \delta_3$, any $x \in \Q^d$ with $\alpha(x) \ge 1 + 3 \eps$, for all but a finite number of $y \in \mathcal{D}_{\delta,x}$, one has
$$
a_m(0,y) \ge (1+\eps) \sigma_{\delta,y}.
$$
\item For any $\eps > 0$, one has
\begin{equation}
\label{eq:shape2}
\P[t^{-1} A_{m,t}^\circ \subset (1+\eps)K \text{ for all } t \text{ large enough}] = 1.
\end{equation}
\item For any $\eps > 0$, (\ref{eq:shape2}) holds if $A_{m,t}^\circ$ is replaced by $A_t^\circ$.
\end{enumerate}
\end{prop}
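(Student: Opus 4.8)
The plan is to mimic the proof of Proposition~\ref{coverball}, interchanging the roles of its two ingredients: now the almost sure radial convergence of Proposition~\ref{radiallimits} provides a \emph{lower} bound, and the tail estimate of Proposition~\ref{controltail} an \emph{upper} bound, on a correction term. No new probabilistic input is required.

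For part~(1), fix $\eps>0$ and $x\in\Q^d$ with $\alpha(x)\ge1+3\eps$, set $\delta_3=\eps/(2c_1)$ and take $\delta\le\delta_3$. For $y\in\mcl D_{\delta,x}$, apply the triangle inequality (\ref{triangleam}) to the triple $0,\lfloor\sigma_{\delta,y}x\rfloor,y$, giving $a_m(0,y)\ge a_m(0,\lfloor\sigma_{\delta,y}x\rfloor)-a_m(y,\lfloor\sigma_{\delta,y}x\rfloor)$. Symmetrically to (\ref{allbutafew}) --- that is, from Proposition~\ref{radiallimits} together with a Borel-Cantelli argument handling the passage from integer dilates of $x$ to the real parameter $\sigma_{\delta,y}$, the correction being a bounded-$\|\cdot\|_1$ move whose cost has a summable tail by Lemma~\ref{integrpath} and Proposition~\ref{integram} --- one gets that almost surely, for all but finitely many $y\in\mcl D_{\delta,x}$, $a_m(0,\lfloor\sigma_{\delta,y}x\rfloor)\ge(\alpha(x)-\eps)\sigma_{\delta,y}\ge(1+2\eps)\sigma_{\delta,y}$. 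For the second term, $\|y-\sigma_{\delta,y}x\|_1=\delta\sigma_{\delta,y}$ by (\ref{equalsigma}), so Proposition~\ref{controltail} and the choice of $\delta$ bound $\P[a_m(y,\lfloor\sigma_{\delta,y}x\rfloor)>\eps\sigma_{\delta,y}]$ exactly as in (\ref{rhssums}) by a quantity which, by (\ref{sigmage}) and the finite $d$-th moment of $X$, is summable over $y\in\Z^d$; Borel-Cantelli then gives $a_m(y,\lfloor\sigma_{\delta,y}x\rfloor)\le\eps\sigma_{\delta,y}$ for all but finitely many $y\in\mcl D_{\delta,x}$. Subtracting yields $a_m(0,y)\ge(1+\eps)\sigma_{\delta,y}$.

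For part~(2), it is enough to show that almost surely $a_m(0,y)\ge(1-\eps)\alpha(y)$ for all but finitely many $y\in\Z^d$: if $a_m(0,y)\le t$ and $\|y\|_1$ is large, this forces $\alpha(y)\le(1-\eps)^{-1}t$; the finitely many $y$ of bounded norm are irrelevant once $t$ is large; and shrinking $\eps$ replaces $(1-\eps)^{-1}$ by $1+\eps$. To obtain this, run part~(1) with a small parameter $\eps_0$. Since $\alpha$ is a norm comparable to $\|\cdot\|_1$ and $\Q^d$ is dense, one can choose finitely many $x^{(1)},\dots,x^{(N)}\in\Q^d$ with $\alpha(x^{(j)})\in[1+3\eps_0,1+4\eps_0]$ and radii $\delta_j\le\delta_3(\eps_0)$ such that the cones $\mcl D_{\delta_j,x^{(j)}}$ cover $\Z^d$ outside a large ball --- this uses compactness of the unit sphere and the fact that $\mcl D_{\delta,x}$ contains a full angular neighbourhood of the direction of $x$ of width $\asymp\delta/\|x\|_1$. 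For $y\in\mcl D_{\delta_j,x^{(j)}}$ with $\|y\|_1$ large, part~(1) gives $a_m(0,y)\ge(1+\eps_0)\sigma_{\delta_j,y}$, while $\|y-\sigma_{\delta_j,y}x^{(j)}\|_1=\delta_j\sigma_{\delta_j,y}$ and the Lipschitz continuity of $\alpha$ give $\alpha(y)\le(\alpha(x^{(j)})+C\delta_j)\sigma_{\delta_j,y}$; hence $a_m(0,y)\ge\frac{1+\eps_0}{\alpha(x^{(j)})+C\delta_j}\,\alpha(y)\ge(1-\eps)\alpha(y)$ once $\eps_0$ is small enough, both $1+\eps_0$ and $\alpha(x^{(j)})+C\delta_j$ being then as close to $1$ as desired. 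This proves (\ref{eq:shape2}).

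Part~(3) follows exactly as the passage from part~(2) to part~(3) of Proposition~\ref{coverball}, now using the reverse inequality: by (\ref{eq:approxam1}) with $x=0$, $a(0,y)\ge a_m(0,y)-Z(0)-2\log(2d)-u_m(y)$, and since $Z(0)$ is almost surely finite while the $u_m(y)$ are identically distributed with finite $d$-th moment (Proposition~\ref{integram}), a Borel-Cantelli argument gives $Z(0)+2\log(2d)+u_m(y)\le\eta\|y\|_1$ for all but finitely many $y$, any fixed $\eta>0$ being admissible; feeding this into the part~(2) bound for $a_m(0,y)$, with $\eta$ small relative to $\eps$ and using $\|y\|_1\le C'\alpha(y)$, yields the same statement for $A_t^\circ$. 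The only step requiring genuine care is the covering argument in part~(2): keeping every cone below the threshold $\delta_3(\eps_0)$ inherited from part~(1) while still covering all directions, and then correctly composing the various $\eps$'s when converting the ``$\sigma_{\delta,y}$-scale'' estimate of part~(1) into the ``$\alpha(y)$-scale'' estimate needed for $t^{-1}A_{m,t}^\circ\subset(1+\eps)K$; this is where the thinness of the cones and the Lipschitz continuity of $\alpha$ enter.
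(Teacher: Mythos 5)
Your proposal is correct, and parts (1) and (3) follow the paper's proof essentially verbatim: the triangle inequality (\ref{triangleam}) applied at the intermediate point $\sigma_{\delta,y}x$, the radial limit of Proposition~\ref{radiallimits} for the main term, the tail bound of Proposition~\ref{controltail} for the lateral correction, and for part (3) the reverse inequality $a(0,y) \ge a_m(0,y) - Z(0) - 2\log(2d) - u_m(y)$ together with the finite $d$-th moment of $u_m(y)$. The only place you genuinely diverge is part (2). The paper shows that each ball $D(x,\delta)$ with $\alpha(x)\ge 1+3\eps$ eventually avoids $t^{-1}A_{m,t}^\circ$, covers the compact annulus $\{1+3\eps \le \alpha \le 2\}$ by finitely many such balls, and concludes; this leaves implicit a small scaling/monotonicity step to rule out points of $t^{-1}A_{m,t}^\circ$ with $\alpha > 2$. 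You instead cover all directions by finitely many cones $\mcl{D}_{\delta_j,x^{(j)}}$ with $\alpha(x^{(j)})\in[1+3\eps_0,1+4\eps_0]$ and convert the conclusion of part (1) into the pointwise bound $a_m(0,y)\ge(1-\eps)\alpha(y)$ for all but finitely many $y\in\Z^d$, using the Lipschitz continuity of $\alpha$ to pass from the $\sigma_{\delta,y}$ scale to the $\alpha(y)$ scale. The two arguments use identical ingredients; yours has the minor advantage of being uniform over all directions at once (so no separate treatment of the region $\alpha>2$ is needed, and it directly yields the $\liminf$ formulation of Remark~\ref{rem1}), at the cost of the slightly more delicate bookkeeping of the constants $\eps_0$, $\delta_j$ and $C$ that you correctly identify as the one point requiring care.
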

\begin{proof}
Let $\eps > 0$, $x \in \Q^d$ such that $\alpha(x) \ge 1 + 3 \eps$, and let $\delta \le \eps/(2c_1)$, where $c_1$ is the constant appearing in Proposition~\ref{controltail}. Using similar arguments as in part (1) of Proposition~\ref{coverball}, one can show that for all but a finite number of $y \in \mcl{D}_{\delta,x}$, 
\begin{equation}
\label{allbutafew2}
a_m(0,\sigma_{\delta,y} x) \ge (\alpha(x) - \eps) \sigma_{\delta,y} \quad \text{ and } \quad a_m(y,\sigma_{\delta,y} x) \le \eps \sigma_{\delta,y}.
\end{equation}
Inequality (\ref{triangleam}) ensures that
$$
a_m(0,y) \ge a_m(0,\sigma_{\delta,y} x) - a_m(y,\sigma_{\delta,y} x),
$$
so except for a finite number of $y \in \mcl{D}_{\delta,x}$, one has
$$
a_m(0,y) \ge (\alpha(x) - 2\eps) \sigma_{\delta,y} \ge (1+\eps)\sigma_{\delta,y},
$$
and part (1) is proved. 

Let us now turn to part (2), and see that one can find $\delta > 0$ small enough such that
\begin{equation}
\label{localcoverrest2}
\begin{array}{l}
\text{if } x \in \Q^d \text{ satisfies } \alpha(x) \ge 1+3\eps, \text{ then} \\
\displaystyle{ \P[D(x,\delta) \cap t^{-1} A^\circ_{m,t} = \emptyset \text{ for all } t \text{ large enough}] = 1.	   }
\end{array}
\end{equation}
The proof is very similar to the proof of (\ref{localcoverrest}), so we do not repeat it here (note that as $\alpha$ is a norm, the fact that $\alpha(x) \ge 1$ gives a uniform lower bound on $\|x\|_1$). 

We can construct a covering of the set 
$$
\{x \in \R^d : (1+3\eps) \le \alpha(x) \le 2 \}
$$
by a finite union of balls each having radius $\delta$ and centre some $x \in \Q^d$ with $\alpha(x) \ge 1+ 3\eps$. We thus obtain from (\ref{localcoverrest2}) that, for all sufficiently large $t$,
$$
t^{-1} A^\circ_{m,t} \cap \{x \in \R^d : (1+3\eps) \le \alpha(x) \le 2 \} = \emptyset,
$$
but this can only happen if (\ref{eq:shape2}) is true.

Part (3) is obtained in the same way as part (3) of Proposition~\ref{coverball}, using 
$$
 a(0,y) \ge a_m(0,y) - Z(0) - 2 \log(2d) - u_m(y)
$$
from Proposition~\ref{approxam}, together with the fact that $(u_m(y))_{y \in \Z^d}$ are identically distributed random variables with finite $d$-th moment (the fact that they are identically distributed comes from our construction of the paths $\gamma^i_{x,y}$ in (\ref{defgammaxy}), and finiteness of the $d$-th moment comes from Proposition~\ref{integram}).
\end{proof}

%
%
%
%
%
%
%
\section{Some discrete geometry}
\label{s:geom}
\setcounter{equation}{0}

In order to use duality arguments in high dimensional percolation, we need to study the relationship between sets and their boundaries, and consider when (and in which sense) the latter form connected sets. Given a set $A \subset \Z^d$, we define its \emph{outer} and \emph{inner boundaries} by, respectively,
$$
\ov{\partial} A = \{ x \in \Z^d \setminus A : \exists y \in A, x \sim y \},
$$
$$
\underline{\partial} A = \{ x \in A : \exists y \in \Z^d \setminus A, x \sim y \}.
$$
For any two points $x,y \in \Z^d$, we say that they are $*$-neighbours, and write $x \stackrel{*}{\sim} y$, if $\|y-x\|_\infty = 1$. We say that a set $A \subset \Z^d$ is $*$-connected if it is connected for this new adjacency relation. We define the $*$-\emph{outer} and $*$-\emph{inner boundaries}
of $A$ by, respectively,
$$
\ov{\partial}^* A = \{ x \in \Z^d \setminus A : \exists y \in A, x \stackrel{*}{\sim} y \},
$$
$$
\underline{\partial}^* A = \{ x \in A : \exists y \in \Z^d \setminus A, x \stackrel{*}{\sim} y \}.
$$
If we want to emphasize that we consider the usual nearest-neighbour relation $\sim$ on $\Z^d$, we may talk about $\Z^d$-neighbours, $\Z^d$-connected sets, and so on.

We say that a set $A \subset \Z^d$ \emph{has a} $\Z^d$-\emph{hole} if one of the $\Z^d$-connected components of $\Z^d \setminus A$ is finite.
\begin{prop}
\label{p:geom}
If $A \subset \Z^d$ is a finite $*$-connected set without $\Z^d$-hole, then $\underline{\partial} A$ is $*$-connected, and $\ov{\partial}^* A$ is $\Z^d$-connected.
\end{prop}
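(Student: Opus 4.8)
The plan is to prove the two assertions of Proposition~\ref{p:geom} somewhat in parallel, since they are dual to each other, and to reduce both to standard facts about connectedness of boundaries in $\Z^d$. The overarching strategy is the usual duality principle: $\Z^d$-connectivity of a set is related to $*$-connectivity of its complement's boundary, and vice versa. Concretely, I would first set up notation for the (unique, by hypothesis) infinite $\Z^d$-connected component of $\Z^d \setminus A$, call it $U$; since $A$ has no $\Z^d$-hole, in fact $\Z^d \setminus A = U$ is itself $\Z^d$-connected. The key classical input I would invoke (or reprove by a short contour argument, in the spirit of \cite{kes} adapted to $d \ge 2$) is: if $A$ is finite and $\Z^d \setminus A$ is $\Z^d$-connected, then $\ov{\partial} A$ is $*$-connected; dually, if $A$ is finite and $*$-connected, then $\Z^d \setminus A$ has $*$-connected complement-boundary, and the outer $*$-boundary $\ov{\partial}^* A$ is $\Z^d$-connected. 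These are the topological lemmas whose higher-dimensional versions are treated in \cite{kes,kes2} and \cite{deuschel}; I would cite them and then do the bookkeeping to pass between inner and outer boundaries.

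The main steps, in order, would be: (i) observe that $A$ finite and without $\Z^d$-hole means $\Z^d\setminus A$ is $\Z^d$-connected, and that $A$ being $*$-connected is the other standing hypothesis; (ii) apply the duality lemma to $B := \Z^d \setminus A$ (which is infinite but cofinite, so the relevant statements still apply to the bounded ``inner'' structure near $A$): the set $\ov{\partial} A = \underline{\partial}^* B$ sits between $A$ and $B$, and one shows $\ov{\partial}^* A$ is $\Z^d$-connected using that $A$ is $*$-connected and $B$ is $\Z^d$-connected; (iii) for the statement about $\underline{\partial} A$, note that each point of $\underline{\partial} A$ is $\Z^d$-adjacent to a point of $\ov{\partial} A$, and use the $*$-connectedness of $\ov{\partial} A$ together with the fact that a point of $A$ adjacent to $\ov{\partial}A$ can be ``pushed'' along $A$ while staying in $\underline{\partial} A$, because $A$ has no hole; chaining these gives $*$-connectedness of $\underline{\partial} A$. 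Throughout I would phrase the argument so that it only uses: finiteness of $A$, $*$-connectedness of $A$, and $\Z^d$-connectedness of $\Z^d \setminus A$ — all guaranteed by the hypotheses.

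The step I expect to be the genuine obstacle is (ii)–(iii), i.e.\ carefully transferring connectivity between inner and outer, and between $\sim$ and $\stackrel{*}{\sim}$, in dimension $d \ge 3$. In the plane one can run a Jordan-curve / contour argument directly, but for general $d$ the clean statements are precisely the duality lemmas of Kesten and of Deuschel–Pisztora; the work is in checking that their hypotheses are met by $A$ and by $\Z^d\setminus A$ here (finiteness, the no-hole condition translating into $\Z^d$-connectedness of the complement) and that the boundary sets named in the proposition coincide with the ones appearing in those references. I would also need a small lemma of the form: if $A$ is $*$-connected and finite with connected complement, then two adjacent boundary points of $A$ (one inner, one outer) can be connected through short $*$-paths living in $\ov{\partial}A$ resp.\ $\underline{\partial}A$; this is where a local case analysis on the $3^d$ configurations around a vertex enters, but it is routine once the global duality is in hand. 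Hence the proof is: cite/establish the duality lemma, verify hypotheses, and do the boundary bookkeeping.
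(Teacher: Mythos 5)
Your approach is essentially the paper's: the proof given there consists entirely of citing \cite[Lemma~2.1~(i)]{deupis} for the $*$-connectedness of $\underline{\partial}A$ and \cite[Lemma~2.23]{kes} for the $\Z^d$-connectedness of $\ov{\partial}^*A$, the hypotheses (finiteness, $*$-connectedness of $A$, and $\Z^d$-connectedness of $\Z^d\setminus A$ coming from the no-hole condition) being exactly the ones you identify. Two caveats: the Deuschel--Pisztora lemma addresses the inner boundary $\underline{\partial}A$ directly, so your detour through $\ov{\partial}A$ and the ``pushing'' argument of step (iii) is both unnecessary and, if actually carried out rather than cited, a genuine gap in its own right; and the paper explicitly warns that these facts, however obvious they look after a few drawings, have quite involved proofs, so the option of reproving them ``by a short contour argument'' for $d\ge 3$ should not be taken lightly.
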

The first part of the Proposition is taken from \cite[Lemma~2.1 (i)]{deupis}, and the second part from \cite[Lemma~2.23]{kes} (although one can convince oneself that the proposition should be true after a few drawings, as illustrated in figure~\ref{fig:neighbours}, it turns out that the proofs of these facts are quite involved). 

\begin{figure}
\centering
\includegraphics[scale=0.5]{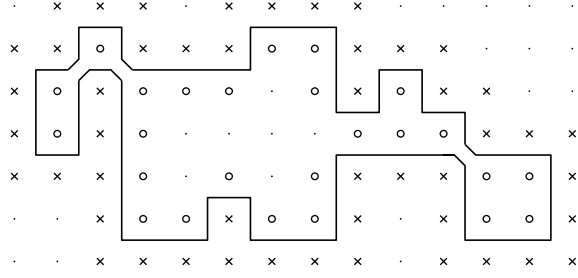}
\caption{
\small{
A $*$-connected set without $\Z^d$-hole, delimited by the closed contour. Crosses denote the ($\Z^d$-connected) $*$-outer boundary of this set, and circles its ($*$-connected) $\Z^d$-inner boundary.
}
}
\label{fig:neighbours}
\end{figure}

%
%
%
%
%
%
%
%
\section{The renormalization construction}
\label{s:renormalize}
\setcounter{equation}{0}

We fix once and for all some $M > 0$ such that 
\begin{equation}
\label{defM}
\P[V(0) \le M] > p_c .	
\end{equation}
A site $x \in \Z^d$ is said to be \emph{livable} if $V(x) < + \infty$, and to be \emph{healthy} if $V(x) \le M$. We call a connected component of livable sites (resp.\ healthy sites) a \emph{livable cluster} (resp.\ \emph{healthy cluster}). We define similarly \emph{livable paths} (resp.\ \emph{healthy paths}) as paths visiting only livable sites (resp.\ healthy sites).

Let $N \ge 2$ be an even integer. In order to lighten the notation and stress the fact that once suitably chosen, $N$ will be kept fixed throughout, we write $B_i = B((2N+1)i,N)$ and $B'_i = B((2N+1)i,3N/2)$, for any $i \in \Z^d$ (recall the definition of $B(\cdot,\cdot)$ given in (\ref{defBxr})).

The boxes $(B_i)_{i \in \Z^d}$ form a partition of $\Z^d$. For any $x \in \Z^d$, we let $i(x)$ be the index such that $x \in B_{i(x)}$. We may refer to the indices of the boxes as the \emph{macroscopic scale}, and to elements of the boxes as the \emph{microscopic scale}.

We say that $i \in \Z^d$ is \emph{good} if the following two conditions hold:
\begin{enumerate}
\item $B_i'$ contains a \emph{crossing cluster} $\texttt{CC}_i$, that is, a healthy cluster linking any two opposite faces of the box,
\item In $B_i' \setminus \texttt{CC}_i$, there is no livable cluster of diameter larger than $N/4$. 
\end{enumerate}
We say that $i$ is \emph{bad} otherwise.

\begin{prop}
\label{p:lss}
For any $p < 1$, there exists $N$ such that $(\1_{\{i \text{ is good}\}})_{i \in \Z^d}$ stochastically dominates an independent Bernoulli percolation of parameter $p$.
\end{prop}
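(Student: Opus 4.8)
The key point is that the event $\{i \text{ is good}\}$ is a \emph{local} event, depending only on the potential values $(V(x))_{x \in B_i'}$, so that the field $(\1_{\{i \text{ is good}\}})_{i}$ has a finite range of dependence (two indices $i,j$ with $\|i-j\|_\infty \ge 2$ have $B_i' \cap B_j' = \emptyset$, hence the corresponding indicators are independent). By the Liggett--Schonmann--Stacey theorem on domination of finite-range-dependent fields by Bernoulli percolation, it therefore suffices to show that $\P[i \text{ is good}] \to 1$ as $N \to \infty$; the desired $p$ is then achieved by taking $N$ large enough. So the entire argument reduces to estimating the probabilities of the two ways in which $i$ can fail to be good.

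For condition (1), I would use the standard fact that when $\P[V(0) \le M] > p_c$, supercritical site percolation in a box of side $cN$ contains a crossing cluster linking opposite faces with probability tending to $1$ as $N \to \infty$; this is a classical consequence of the theory of supercritical percolation (e.g.\ via renormalization / the work of Grimmett and Marstrand, or Deuschel--Pisztora-type statements), and one also needs uniqueness of the giant cluster inside the box so that $\CC_i$ is well defined up to the tie-breaking implicit in condition (2) (in fact one typically builds $\CC_i$ as the unique cluster touching all faces). Thus $\P[\text{(1) fails}] \le \eta_1(N)$ with $\eta_1(N) \to 0$.

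For condition (2), conditionally on $\CC_i$, a livable cluster of diameter larger than $N/4$ inside $B_i' \setminus \CC_i$ would in particular contain a self-avoiding livable path of length at least $N/4$ that avoids $\CC_i$. Since $\CC_i$ is (with high probability) the unique large healthy cluster, such a path must consist mostly of sites $x$ with $M < V(x) < +\infty$, \emph{or} it must be a livable path disjoint from the giant healthy cluster; in the supercritical regime the probability that there is a long ($\ge N/4$) livable path not meeting the infinite/giant healthy cluster decays in $N$ (this is again a supercritical percolation input: outside the giant cluster, clusters are small with stretched-exponential tails on their diameter). A union bound over the at most $|B_i'| = O(N^d)$ starting points of such a path, each contributing a term decaying in $N$, gives $\P[\text{(2) fails}] \le \eta_2(N) \to 0$. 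I would phrase the path-counting carefully so as to make sure the bound is uniform; the cleanest route is to use the exponential decay of the radius of the cluster of a fixed site in the subcritical ``dual'' sense, i.e.\ in $B_i' \setminus \CC_i$ every livable cluster is contained in a region where healthy percolation is effectively subcritical-like.

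The main obstacle is the percolation input for condition (2): one must be careful that ``no large livable cluster outside $\CC_i$'' really does hold with high probability, because livable sites are more numerous than healthy sites and in principle could percolate even when healthy sites barely do. The resolution is that the relevant comparison is not between livable and healthy percolation globally, but between a long livable path \emph{avoiding the crossing cluster} and the geometry of the complement of the giant healthy cluster; inside a box, the giant healthy cluster is unique with high probability, and everything outside it has small connected components, so a livable path of length $N/4$ staying outside $\CC_i$ is exponentially unlikely. Once this is set up, choosing $N$ so that $\eta_1(N) + \eta_2(N) < 1-p'$ for an appropriate $p' > p$ and invoking the Liggett--Schonmann--Stacey domination theorem completes the proof.
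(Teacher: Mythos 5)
Your overall architecture is the paper's: the event $\{i \text{ is good}\}$ depends only on $(V(x))_{x\in B_i'}$, so the field has finite range of dependence and the Liggett--Schonmann--Stacey theorem reduces everything to showing $\P[i \text{ is good}]\to 1$ as $N\to\infty$; and your treatment of condition (1) via crossing probabilities for supercritical site percolation is exactly what the paper does (citing \cite[Theorem~8.97]{grim}). The gap is in condition (2). Your percolation input --- ``a livable path of length $N/4$ staying outside $\CC_i$ is exponentially unlikely, because outside the giant healthy cluster everything has small connected components'' --- is not a correct statement. What is small in supercritical percolation is a \emph{finite cluster of the same percolation}, not the complement of the giant cluster: for $d\ge 3$ one can have $\P[V(0)\le M]>p_c$ and $\P[V(0)>M]>p_c$ simultaneously, so the non-healthy (but possibly livable) sites can themselves form crossing structures of $B_i'$ disjoint from $\CC_i$. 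Since livable percolation is strictly denser than healthy percolation, no ``effectively subcritical'' comparison is available for livable sets avoiding $\CC_i$. You correctly flag this as the main obstacle, but the resolution you offer simply restates the claim; as written, the step would fail.

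The paper's argument avoids this entirely and is the missing ingredient. If condition (2) fails, then $B_0'$ contains two \emph{disjoint} livable clusters each of diameter larger than $N/4$: the one containing the (livable, since healthy) crossing cluster $\CC_0$, and the offending one. By uniqueness of the infinite livable cluster in $\Z^d$, at least one of these two must be a livable cluster that is simultaneously finite and of diameter larger than $N/4$; the probability of that is bounded by
$$
\sum_{x\in B_0'} \P\bigl[N/4 \le \mathrm{diam}(\text{livable cluster containing } x) < +\infty\bigr],
$$
which decays exponentially in $N$ by the standard supercritical truncated-diameter estimate \cite[Theorem~8.21]{grim}. Note that this argument makes no claim that a long livable path must \emph{meet} $\CC_i$; it only rules out a second large maximal livable component disjoint from the one containing $\CC_i$, which is the form of condition (2) that the rest of the paper actually uses. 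To repair your proof you should replace the path-counting heuristic by this uniqueness-plus-truncated-tail argument.
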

\begin{proof}
Since the family of random variables $(\1_{\{i \text{ is good}\}})_{i \in \Z^d}$ have only finite-range dependence, and considering \cite[Theorem~B26 p.\ 14]{lig} (which is a special case of results in \cite{lss}), it suffices to verify that the probability for $0$ to be good can be made as close to $1$ as desired.

The probability for a site of the microscopic scale to be healthy is supercritical. Hence, the probability that there exists a connected component of healthy sites linking the two opposite faces of $B_0'$
in the direction of the first coordinate axis tends to $1$ as $N$ tends to infinity (see \cite[Theorem~8.97]{grim}). The same is true in the other coordinate directions, so the probability of existence of a crossing cluster in $B_0'$ tends to $1$ as $N$ tends to infinity. 

We now argue that the probability that there exist two disjoint livable clusters in $B_0'$ with diameters larger than $N/4$ tends to $0$. Indeed, on this event, due to the uniqueness of the infinite livable cluster, there exists at least one livable cluster which is both finite and of diameter larger than $N/4$ inside $B_0'$. The probability of this event is bounded by
$$
\sum_{x \in B_0'} \P[N/4 \le \mathrm{diam}(\text{livable cluster containing } x) < + \infty],
$$
which decays to $0$ exponentially fast due to \cite[Theorem~8.21]{grim}.
\end{proof}
\begin{equation}
\label{t:lss}
\begin{array}{l}
\text{From now on, we fix } N \text{ such that the conclusion of} \\
\text{Proposition~\ref{p:lss} holds with } p > \max(1-3^{-d},p_c). 
\end{array}
\end{equation}

\begin{prop}
\label{uniqueinfinitecomp}
With probability one, there exists a unique infinite connected component of good macroscopic sites, that we denote by $\mclC_\infty$.
\end{prop}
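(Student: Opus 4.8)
The plan is to deduce the statement from Proposition~\ref{p:lss} together with standard facts about supercritical Bernoulli percolation. By \eqref{t:lss}, the family $(\1_{\{i \text{ is good}\}})_{i \in \Z^d}$ stochastically dominates an i.i.d.\ Bernoulli field of parameter $p > p_c$, and dominates a field of parameter $p > 1 - 3^{-d}$ as well. The first bound gives us a supercritical percolation of good sites, hence the almost sure existence of at least one infinite connected component of good macroscopic sites.

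For uniqueness, I would invoke the Burton--Keane theorem, which asserts that a translation-invariant percolation measure on $\Z^d$ with the finite-energy property (or, more simply, an i.i.d.\ Bernoulli percolation) has at most one infinite cluster almost surely. The only subtlety is that the field $(\1_{\{i \text{ is good}\}})$ is not itself independent: it has finite-range dependence (each indicator depends only on the potential inside $B_i'$, and the boxes $B_i'$ corresponding to macroscopic sites at $\|\cdot\|_\infty$-distance $\ge 2$ are disjoint, so the field is $1$-dependent). Finite-range dependent translation-invariant fields still satisfy the Burton--Keane argument, since the proof only uses translation invariance, ergodicity, and an insertion-tolerance-type input that holds here because flipping a single macroscopic site to ``good'' (or conditioning on it) has positive conditional probability given any configuration on the rest. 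Alternatively, and perhaps more cleanly, one can bypass Burton--Keane entirely: since $p > 1 - 3^{-d}$, the \emph{complement} field of bad sites is dominated by a subcritical Bernoulli percolation, so by a Peierls-type argument the bad sites form only finite clusters, which forces the good sites to have a unique infinite component (a finite ``separating surface'' of bad sites would be needed to disconnect two infinite good clusters, and no such surface exists). This second route is the one I would actually write, since it uses only the domination already established and a routine contour estimate.

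So the structure is: (i) existence via $p > p_c$ and the standard fact that supercritical Bernoulli percolation has an infinite cluster a.s., combined with stochastic domination (a decreasing-event or increasing-event comparison); (ii) uniqueness via $p > 1 - 3^{-d}$, domination of the bad field by subcritical percolation, a contour/Peierls argument showing every bad cluster is finite a.s., and the elementary topological observation that $\Z^d$ minus a union of uniformly locally finite finite sets of bad sites cannot have two distinct infinite good components separated from each other. I would phrase (ii) using Proposition~\ref{p:geom}: if two infinite good clusters were disjoint, one could extract a finite $*$-connected set of bad sites forming a ``contour'' separating a neighbourhood of a point from infinity, contradicting subcriticality.

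\textbf{Main obstacle.} The only real point requiring care is the uniqueness argument, specifically making rigorous the topological claim that finitely-sized bad clusters cannot separate the good region into more than one infinite piece. The natural tool is exactly the duality machinery of section~\ref{s:geom}: a separation between two infinite good components would be realized by a $*$-connected set of bad sites, and controlling its size via the subcritical contour bound (using $p > 1 - 3^{-d}$, i.e.\ comparing with a percolation whose parameter lies below the $*$-percolation threshold $3^{-d}$-type bound via a crude counting estimate on the number of $*$-connected sets of given size through a point) is where the chosen exponent $1 - 3^{-d}$ in \eqref{t:lss} is used. Everything else is bookkeeping: translation invariance and ergodicity of the good field are immediate from those of the i.i.d.\ potential.
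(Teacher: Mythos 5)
Your existence argument (stochastic domination of the good field by a supercritical i.i.d.\ Bernoulli field, via Proposition~\ref{p:lss}) is exactly the paper's. The uniqueness argument, however, contains a genuine gap in the route you say you would actually write. Your reduction is: all $*$-clusters of bad sites are finite (true, by the Peierls count with $1-p<3^{-d}$), hence ``a finite $*$-connected set of bad sites forming a contour separating a neighbourhood of a point from infinity'' would be needed to disconnect two infinite good clusters. This is not the right topological statement: if two \emph{infinite} good clusters are disjoint, no point is separated from infinity, so there is no finite separating contour to contradict. Any set of bad sites separating two infinite components is itself infinite, and the claim you actually need --- that such a separating set must contain an \emph{infinite} $*$-connected set of bad sites --- is not covered by Proposition~\ref{p:geom}, which applies only to \emph{finite} $*$-connected sets without $\Z^d$-hole. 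That claim (``all bad $*$-clusters finite $\Rightarrow$ at most one infinite good cluster'') is true in $d=2$ by planar duality but is a genuinely non-elementary assertion in $d\ge 3$; it cannot be dismissed as ``bookkeeping''. Your alternative route via Burton--Keane has its own gap: the derived field $(\1_{\{i \text{ is good}\}})$ is $1$-dependent but insertion tolerance for it is not obvious, since goodness of neighbouring macroscopic sites depends on overlapping regions of the potential, and Burton--Keane fails for general translation-invariant dependent fields without a finite-energy input.

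The paper's proof circumvents all of this by reducing everything to statements about \emph{finite} sets. It fixes the (a.s.\ unique) infinite cluster $\mfk{C}_\infty$ of the dominated i.i.d.\ field; any infinite good cluster either contains $\mfk{C}_\infty$ (hence all such clusters coincide) or lies entirely in $\Z^d\setminus\mfk{C}_\infty$. It then shows that the component $\mfk{C}_0$ of $\Z^d\setminus\mfk{C}_\infty$ containing the origin is a.s.\ finite: the crossing property of the i.i.d.\ field in $B(0,n)$ forces, on an event of probability $\ge 1-\eps$, the inner boundary of $\mfk{C}_{0,n}$ to contain at least $n$ sites with $\mfk{B}_i=0$; this inner boundary is $*$-connected by Proposition~\ref{p:geom} (legitimately, since $\mfk{C}_{0,n}$ is finite and has no $\Z^d$-hole because $\mfk{C}_\infty$ is connected); and a $*$-connected set of $n$ such sites has probability at most $(1-p)^n(3^d)^n$. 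If you want to keep your outline, you should replace the ``separation of two infinite clusters'' step by this reference-cluster argument, or supply an actual proof of the infinite-cutset connectivity statement.
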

\begin{proof}
Let $(\mfk{B}_i)_{i \in \Z^d}$ be independent Bernoulli random variables of parameter $p$ as in (\ref{t:lss}) that are dominated by $(\1_{\{i \text{ is good}\}})_{i \in \Z^d}$. The set $\{ i \in \Z^d : \mfk{B}_i = 1 \}$ contains a unique infinite connected component, that we denote by $\mfk{C}_\infty$. By domination, this set is contained in the set of good sites, hence there exists at least one infinite connected component of good sites.

If an infinite connected component of good sites does not contain $\mfk{C}_\infty$, then it belongs to its complement. In order to check uniqueness, it thus suffices to show that there exists no infinite connected component in the complement of $\mfk{C}_\infty$. By translation invariance, it suffices to show that the connected component of $\Z^d \setminus \mfk{C}_\infty$ containing $0$ is finite almost surely. Let us write $\mfk{C}_0$ for this set. It is the set of points that can be connected to $0$ via a nearest-neighbour path visiting only sites of $\Z^d \setminus \mfk{C}_\infty$ (and it is empty if $0 \in \mfk{C}_\infty$). Note that, since $\mfk{C}_\infty$ is connected, the set $\mfk{C}_0$ has no $\Z^d$-hole.

Let $\eps > 0$ and $n \in \N$. We let $\mfk{C}_{0,n}$ be the connected component of $\mfk{C}_0 \cap B(0,n)$ containing $0$. If $|\mfk{C}_0| = +\infty$, then there exists a path in $\mfk{C}_{0,n}$ linking $0$ to one face of the box $B(0,n)$. For definiteness, let us assume that it is the face $\{n\}\times \{-n,\ldots,n\}^{d-1}$. By \cite[Theorem~8.97]{grim}, we know that for all $n$ large enough, the probability that $B(0,n) \cap \mfk{C}_\infty$ contains an open path linking any two opposite faces of $B(0,n)$ becomes larger than $1-\eps$. On this event, any slice of the form $\{k\}\times \{-n,\ldots,n\}^{d-1}$ with $0 \le k \le n$ contains points of both $\mfk{C}_{0,n}$ and its complement, and therefore contains at least one point in $\underline{\partial}\mfk{C}_{0,n}$. We have thus proved that, outside of an event of probability less than $\eps$, the fact that $|\mfk{C}_0| = +\infty$ implies that there are at least $n$ points in $\underline{\partial}\mfk{C}_{0,n} \cap B(0,n-1)$.

Say that a site $i$ is \emph{red} if $\mfk{B}_i = 0$. Any point in $\underline{\partial}\mfk{C}_{0,n} \cap B(0,n-1)$ is red, since such a point must be outside of $\mfk{C}_\infty$, while neighbouring this set. We learn from part (i) of Proposition~\ref{p:geom} that $\underline{\partial}\mfk{C}_{0,n}$ must be $*$-connected. We thus obtain
\begin{eqnarray*}
\P[|\mfk{C}_0| = +\infty] & \le & \eps + \sum_{x \in B(0,n)} \P
\left[
\begin{array}{l}
\exists *\text{-n.n.\ simple path of red sites} \\
\text{starting from } x \text{ and of length } n
\end{array}
\right] \\
& \le & \eps + \sum_{x \in B(0,n)} (1-p)^n (3^d)^n,
\end{eqnarray*}
with $p > 1-3^{-d}$. Taking $n$ large enough thus ensures that $\P[|\mfk{C}_0| = +\infty] \le 2\eps$, for any $\eps > 0$, and the result follows.
\end{proof}

\begin{figure}
\centering
\psfrag{aa}{\small{$i$}}
\includegraphics[scale=0.8]{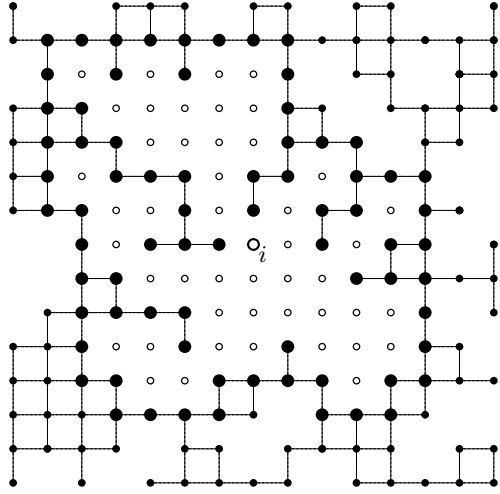}
\caption{
\small{
Macroscopic sites around $i \in \Z^d$. Black dots are the points of $\mcl{C}_\infty$, white dots the points of $\mcl{C}_i$, and dashed links are drawn between neighbouring black dots for clarity. Note that $\mcl{C}_i$ has no $\Z^d$-hole, while $\ov{\partial}^* \mathcal{C}_i$ is a $\Z^d$-connected subset of $\mcl{C}_\infty$, denoted by larger black dots.
}
}
\label{fig:mclC}
\end{figure}

For $i \notin \mclC_\infty$, let $\mathcal{C}_i$ be the smallest $*$-connected set containing $i$ such that any point of its $*$-outer boundary is in $\mclC_\infty$ (see figure~\ref{fig:mclC}). It is the set of points that can be connected to $i$ through a $*$-nearest-neighbour path visiting only points of $\Z^d \setminus \mclC_\infty$. Since $\mclC_\infty$ is $\Z^d$-connected, $\mclC_i$ has no $\Z^d$-hole. We let $\ovmclC_i = \mathcal{C}_i \cup \ov{\partial}^* \mathcal{C}_i$. For $i \in \mclC_\infty$, we let $\mclC_i = \emptyset$, and with some abuse of notation, $\ovmclC_i = \ov{\partial}^* \mathcal{C}_i = \{i\}$.  The next proposition guarantees that the set $\ovmclC_i$ cannot be very large.
\begin{prop}
\label{tailradius}
There exists $c > 0$ such that for any $i \in \Z^d$ and any $t \ge 0$,
$$
\P[|\ovmclC_i| \ge t] \le e^{-ct^{1-1/d}}.
$$
\end{prop}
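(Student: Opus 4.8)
The plan is to reduce the event $\{|\ovmclC_i|\ge t\}$ to the existence, near $i$, of a large $*$-connected set of \emph{bad} macroscopic sites, and then to estimate the probability of the latter by a Peierls-type bound based on the stochastic domination of Proposition~\ref{p:lss}. By translation invariance of the law of the field $(\1_{\{j\text{ is good}\}})_{j\in\Z^d}$, and hence of $\mclC_\infty$, it suffices to treat $i=0$; when $0\in\mclC_\infty$ there is nothing to prove since then $\ovmclC_0=\{0\}$, so assume $0\notin\mclC_\infty$. As every site has at most $3^d-1$ $*$-neighbours, $|\ov{\partial}^*\mclC_0|\le(3^d-1)|\mclC_0|$, whence $|\ovmclC_0|\le 3^d|\mclC_0|$; it is therefore enough to prove that $\P[|\mclC_0|\ge s]\le e^{-c\,s^{1-1/d}}$ for $s$ large, and then take $s=3^{-d}t$.

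The geometric heart of the argument rests on the observation that the $\Z^d$-inner boundary $\underline{\partial}\mclC_0$ consists of bad sites: if $x\in\underline{\partial}\mclC_0$ then $x$ has a $\Z^d$-neighbour $y\notin\mclC_0$, and $y$ cannot lie in $\Z^d\setminus\mclC_\infty$ (it would then belong to the $*$-connected component of $0$ in $\Z^d\setminus\mclC_\infty$, namely $\mclC_0$), so $y\in\mclC_\infty$; since a good site that is $\Z^d$-adjacent to $\mclC_\infty$ must itself lie in $\mclC_\infty$, the site $x$ is bad. We then distinguish whether or not $\mclC_0\subset B(0,s)$. If $\mclC_0\subset B(0,s)$, then $\mclC_0$ is a finite $*$-connected set with no $\Z^d$-hole (as noted in the text), so $\underline{\partial}\mclC_0$ is $*$-connected by Proposition~\ref{p:geom}; being moreover made of bad sites and of cardinality at least $c\,|\mclC_0|^{1-1/d}\ge c\,s^{1-1/d}$ by the isoperimetric inequality on $\Z^d$, it is the $*$-connected bad set we want, and it lies inside $B(0,s)$. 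If $\mclC_0\not\subset B(0,s)$, let $A$ be the $*$-connected component of $0$ in $(\Z^d\setminus\mclC_\infty)\cap B(0,s)$: it is a finite $*$-connected set without $\Z^d$-hole joining $0$ to $\partial B(0,s)$, and on the event that $\mclC_\infty$ contains a crossing cluster of $B(0,s)$ in each coordinate direction --- an event of probability at least $1-e^{-cs}$ by the exponential convergence of crossing probabilities in supercritical percolation --- the slice-by-slice argument from the proof of Proposition~\ref{uniqueinfinitecomp} produces at least $cs$ bad sites of $\underline{\partial}A$ lying in $cs$ consecutive hyperplanes; these form a $*$-connected set containing a $*$-path of length at least $cs$, again inside $B(0,s)$.

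It remains to bound the probability of such a configuration. The event that a prescribed finite set is entirely bad is increasing in the field of bad sites, so by Proposition~\ref{p:lss} its probability is at most $(1-p)^{m}$ where $m$ is the cardinality of the set; the number of $*$-connected subsets of $B(0,s)$ of cardinality $m$ is at most $|B(0,s)|\,C_d^{\,m}$ for a constant $C_d=C_d(d)$, and in the crossing case it even suffices to count $*$-paths, of which there are at most $(3^d)^m$. A union bound hence gives, for $s$ large,
$$
\P[|\mclC_0|\ge s]\ \le\ e^{-c s}\ +\ |B(0,s)|\sum_{m\ge c\, s^{1-1/d}}\big(C_d(1-p)\big)^m\ \le\ e^{-c'\,s^{1-1/d}},
$$
the last step because $|B(0,s)|$ is polynomial in $s$ while the series decays geometrically, provided $p$ is close enough to $1$ that $C_d(1-p)<1$ (and $3^d(1-p)<1$, which holds by \eqref{t:lss}); the smallness of the probability of a bad site guaranteed by the renormalization is precisely what is used here. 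Combining with $|\ovmclC_0|\le 3^d|\mclC_0|$ gives the stated bound.

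The step I expect to be the main obstacle is the geometric one: obtaining, in \emph{all} cases, a genuinely \emph{$*$-connected} set of \emph{bad} sites of cardinality of the correct order $s^{1-1/d}$. This is what forces the choice of truncation scale $s$, the use of the isoperimetric inequality on the inner boundary when $\mclC_0$ stays bounded, and of the crossing-cluster argument of Proposition~\ref{uniqueinfinitecomp} --- relying on the connectedness of inner boundaries from Proposition~\ref{p:geom} --- when it does not. A secondary technical point is to arrange that the combinatorial count of candidate boundary sets is beaten by the probabilistic cost $(1-p)^m$, which is where one uses that the renormalization parameter can be taken close enough to $1$.
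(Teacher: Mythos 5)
Your proof is correct and follows essentially the same route as the paper: reduce to $\mclC_0$, observe that $\underline{\partial}\mclC_0$ is a $*$-connected set of bad sites, get its cardinality from the isoperimetric inequality (resp.\ from the crossing-cluster argument of Proposition~\ref{uniqueinfinitecomp} when $\mclC_0$ escapes the box), and conclude by a Peierls count against the domination of Proposition~\ref{p:lss}. The only cosmetic difference is that you count $*$-connected subsets (constant $C_d$, requiring $p$ close enough to $1$, which the renormalization allows) where the paper counts $*$-paths with $(3^d)^m$.
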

\begin{proof}
By translation invariance, we can restrict our attention to $i=0$, and as $|\ovmclC_0| \le 3^d |\mclC_0| + 1$, it suffices to prove the proposition with $\ovmclC_0$ replaced by $\mclC_0$. 

To see that $\P[|\mclC_0| = +\infty] = 0$, one can reproduce the proof of the ``uniqueness'' part of Proposition~\ref{uniqueinfinitecomp}. Indeed, let $\eps > 0$, $n \in \N$ and let $\mclC_{0,n}$ be the connected component of $\mclC_0 \cap (0,n)$ containing $0$. The same argument (together with the stochastic domination of Proposition~\ref{p:lss} for the existence of crossings) ensures that outside of an event of probability less than $\eps$, there exists a $*$-connected set of $n$ bad sites in $\underline{\partial}\mclC_{0,n}$ whenever $|\mclC_0| = +\infty$. As a consequence,
\begin{eqnarray*}
\P[|\mclC_0| = +\infty] & \le & \eps + \sum_{x \in B(0,n)} \P
\left[
\begin{array}{l}
\exists *\text{-n.n.\ simple path of bad sites} \\
\text{starting from } x \text{ and of length } n
\end{array}
\right] \\
& \le & \eps + \sum_{x \in B(0,n)} (1-p)^n (3^d)^n,
\end{eqnarray*}
with $p > 1-3^{-d}$, and we thus obtain that indeed $\P[|\mclC_0| = +\infty] = 0$.

Let us now recall the isoperimetric inequality on $\Z^d$ (see for instance \cite[Section~I.4.B]{woess}): there exists a constant $\mcl{I}$ such that for any finite $A \subset \Z^d$, 
\begin{equation}
\label{isop}
|\underline{\partial}{A}|^{d/(d-1)} \ge \mcl{I} |A|.
\end{equation}
As a consequence,
\begin{equation}
\label{probisop}
\P\left[|\mclC_0| = n\right] \le \P\left[|\underline{\partial}\mclC_0| \ge (\mcl{I}n)^{1-1/d}\right].
\end{equation}
As before, the set $\underline{\partial}\mclC_0$ contains only bad sites and is $*$-connected. Moreover, on the event $|\mclC_0| = n$, the inner boundary of $\mclC_0$ must be contained in $B(0,n)$. The probability in (\ref{probisop}) is thus bounded by the probability to see a $*$-connected set of bad sites of size at least $(\mcl{I}n)^{1-1/d}$ in the box $B(0,n)$. Using Proposition~\ref{p:lss}, we arrive at
\begin{eqnarray*}
&& \P\left[|\mclC_0| = n\right] \\
&& \qquad \le \sum_{x \in B(0,n)} \P[\exists *\text{-n.n.\  simple path starting from } x \text{ and of length } (\mcl{I}n)^{1-1/d}] \\
&& \qquad \le \sum_{x \in B(0,n)} (1-p)^{(\mcl{I}n)^{1-1/d}} (3^d)^{(\mcl{I}n)^{1-1/d}},
\end{eqnarray*}
with $1 - p < 3^{-d}$, and this proves the proposition.
\end{proof}
Let 
\begin{equation}
\label{defDelta}
\Delta_i' = \bigcup_{j \in \ovmclC_i} B_j', \qquad \Delta^g_i = \bigcup_{j \in \ov{\partial}^* \mathcal{C}_i} \CC_j
\end{equation}
(the superscript ``$g$'' standing for ``good''). For a microscopic site $x \in \Z^d$, we let 
$\Delta'(x) = \Delta'_{i(x)}$ and $\Delta^g(x) = \Delta^g_{i(x)}$. We first make a few simple observations on the geometry of these sets.
\begin{prop}
\label{observ} 
\begin{enumerate}
\item If $\mclC_i \cap \mclC_j \neq \emptyset$, then $\mclC_i = \mclC_j$.
\item If $i,j \in \Z^d$ are good sites and $i \sim j$, then $\CC_i \cap \CC_j \neq \emptyset$.
\item If $x,y \in \Z^d$ are neighbours, then either $i(x) = i(y)$, or $i(x) \sim i(y)$.
\item If $\gamma = (\gamma_0,\ldots,\gamma_l)$ is a nearest-neighbour path visiting only livable sites, such that $\gamma_0 = x$ and $\gamma_l \notin \Delta'(x)$, then $\gamma$ visits a point of $\Delta^g(x)$.
\item For any $x \in \Z^d$, $\Delta^g(x)$ is a $\Z^d$-connected set.
\end{enumerate}
\end{prop}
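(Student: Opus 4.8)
The plan is to establish the five assertions separately: parts~(1) and~(3) are immediate from the constructions, part~(2) is where the substantive argument lies --- and where the absence-of-large-livable-clusters condition in the definition of a good site is used essentially --- and parts~(4) and~(5) will then follow by combining part~(2), part~(3) and Propositions~\ref{p:geom} and~\ref{tailradius}. For part~(1), I would recall that $\mclC_i$ is, by its very definition, the $*$-connected component of $\Z^d \setminus \mclC_\infty$ containing $i$ (and is empty when $i \in \mclC_\infty$); hence if $\mclC_i \cap \mclC_j \neq \emptyset$ then both are nonempty, $i,j \notin \mclC_\infty$, and $\mclC_i$, $\mclC_j$ are $*$-connected components of $\Z^d \setminus \mclC_\infty$ sharing a point, so they coincide. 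For part~(3), the point is just that the boxes $B_k$ are products over the $d$ coordinates of the intervals $\{(2N+1)k_l - N, \ldots, (2N+1)k_l + N\}$, and these tile $\Z$; passing from $x$ to $y = x \pm e_l$ modifies only the $l$-th coordinate of $x$, and by one unit, so it modifies $i(\cdot)$ in at most the $l$-th coordinate and by at most one.

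The main work is part~(2). Using the translation, reflection and coordinate-permutation symmetries of the construction I may assume $j = i + e_1$, and I write $c_l = (2N+1)i_l$. One computes that $B_i'$ and $B_j'$ share the common range $\{c_l - 3N/2, \ldots, c_l + 3N/2\}$ in each coordinate $l \neq 1$, while in the first coordinate their ranges are $\{c_1 - 3N/2, \ldots, c_1 + 3N/2\}$ and $\{c_1 + N/2 + 1, \ldots, c_1 + 7N/2 + 1\}$; hence $S := B_i' \cap B_j'$ is a slab, of width $N - 1$ in the first coordinate and equal to the common range above in all other coordinates. Since $j$ is good, there is a path $P \subseteq \CC_j$ joining the faces $\{\text{first coordinate} = c_1 + N/2 + 1\}$ and $\{\text{first coordinate} = c_1 + 7N/2 + 1\}$ of $B_j'$; its starting face is precisely one face of $S$, and if I follow $P$ up to the first time its first coordinate equals $c_1 + 3N/2$ I obtain a sub-path $P'$ that never leaves $S$ (its first coordinate stays in $\{c_1 + N/2+1, \ldots, c_1 + 3N/2\}$ and its other coordinates in the common range) and that joins the two faces of $S$ in the first direction, so the first coordinates of its endpoints differ by $N - 1 > N/4$. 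This $P'$ lies in $B_i'$ and consists of healthy --- hence livable --- sites. If $\CC_i$ and $\CC_j$ were disjoint, $P'$ would avoid $\CC_i$ and would thus be contained in a single livable cluster of $B_i' \setminus \CC_i$, which would then have diameter larger than $N/4$, contradicting that $i$ is good. Hence $\CC_i \cap \CC_j \neq \emptyset$. I expect the one delicate point to be the verification that the truncated crossing $P'$ genuinely never leaves $S$; the rest is bookkeeping with the box coordinates.

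For part~(4), I would follow the macroscopic indices $i(\gamma_0), \ldots, i(\gamma_l)$. By part~(3) consecutive ones are equal or $\Z^d$-adjacent, $i(\gamma_0) = i(x) =: i$, and since $\gamma_l \notin \Delta'(x) \supseteq \bigcup_{k \in \ovmclC_i} B_k'$ one has $i(\gamma_l) \notin \ovmclC_i$. If $i \notin \mclC_\infty$ then $i \in \mclC_i$ while $i(\gamma_l) \notin \mclC_i$, so there is a first index $n \geq 1$ with $i(\gamma_n) \notin \mclC_i$; the step from $\gamma_{n-1}$ to $\gamma_n$ is then a genuine $\Z^d$-step, so $k := i(\gamma_n)$ is a $*$-neighbour of $i(\gamma_{n-1}) \in \mclC_i$ lying outside $\mclC_i$, that is, $k \in \ov{\partial}^* \mclC_i$ (when $i \in \mclC_\infty$ I take instead $n = 0$ and $k = i \in \ov{\partial}^*\mclC_i$). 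In all cases $\gamma_n \in B_k$ with $k$ a good site, and $\gamma$ eventually exits $B_k' \subseteq \Delta'(x)$; so the portion of $\gamma$ that starts at $\gamma_n$ and runs until its first exit from $B_k'$ is a connected livable subset of $B_k'$ joining $B_k$ to $\underline{\partial} B_k'$, hence of diameter at least $3N/2 - N = N/2 > N/4$. By the absence-of-large-livable-clusters condition for the good site $k$, this portion must meet $\CC_k$, and $\CC_k \subseteq \Delta^g_i = \Delta^g(x)$.

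Finally, for part~(5): if $i(x) \in \mclC_\infty$ then $\Delta^g(x) = \CC_{i(x)}$ is a single healthy cluster, hence $\Z^d$-connected. Otherwise $\mclC_{i(x)}$ is $*$-connected by construction, almost surely finite by Proposition~\ref{tailradius}, and has no $\Z^d$-hole because $\mclC_\infty$ is $\Z^d$-connected; so Proposition~\ref{p:geom} gives that $\ov{\partial}^* \mclC_{i(x)}$ is $\Z^d$-connected. For $\Z^d$-adjacent $j, j' \in \ov{\partial}^*\mclC_{i(x)}$ --- both good, since they lie in $\mclC_\infty$ --- part~(2) gives $\CC_j \cap \CC_{j'} \neq \emptyset$, and each $\CC_j$ is itself $\Z^d$-connected; a routine concatenation argument (follow a $\Z^d$-path in the index set $\ov{\partial}^*\mclC_{i(x)}$, passing from $\CC_j$ to $\CC_{j'}$ at a shared vertex whenever $j \sim j'$) then shows that $\Delta^g(x) = \bigcup_{j \in \ov{\partial}^*\mclC_{i(x)}} \CC_j$ is $\Z^d$-connected.
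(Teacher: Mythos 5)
Your proof is correct and follows essentially the same route as the paper's: part (2) rests on the same crossing-cluster versus no-large-livable-cluster dichotomy (you truncate a crossing path of $B_j'$ inside $B_i'$ and invoke the goodness of $i$, where the paper considers $\CC_i \cap B_j'$ and invokes the goodness of $j$ --- a symmetric variant), part (4) tracks the macroscopic indices until the path first leaves $\mclC_{i(x)}$ and then forces a long livable stretch inside $B_k'$ for a good $k \in \ov{\partial}^*\mclC_{i(x)}$, and part (5) concatenates the clusters $\CC_j$ along a path in the $\Z^d$-connected set $\ov{\partial}^*\mclC_{i(x)}$ supplied by Proposition~\ref{p:geom}. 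No gaps.
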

\begin{proof}
To prove part (1), let $k \in \mclC_i \cap \mclC_j$ and $l \in \mclC_i$. There exists a $*$-nearest-neighbour path visiting only sites of $\Z^d \setminus \mathcal{C}_\infty$ between any two sites chosen in one of the pairs $(i,k)$, $(j,k)$ and $(i,l)$. By a suitable concatenation, one can construct a similar path linking $j$ to $l$ through $k$ and $i$, and thus $l \in \mclC_j$. We proved that $\mclC_i \subset \mclC_j$, and by symmetry $\mclC_i = \mclC_j$.

For part (2), let us say for simplicity that $e = j -i$ is in the direction of the first coordinate axis, that is, $j-i = (1,0,\ldots,0)$. The crossing property implies that $\CC_i \cap B'_j$ is a healthy cluster linking the face 
$$
(2N+1)j + \{-3N/2\} \times \{-3N/2,\ldots, 3N/2\}^{d-1}
$$ 
to the face 
\begin{multline*}
(2N+1)i + \{3N/2\} \times \{-3N/2,\ldots, 3N/2\}^{d-1} \\
= (2N+1)j + \{-N/2-1\} \times \{-3N/2,\ldots, 3N/2\}^{d-1}.
\end{multline*}
In particular, $\CC_i \cap B'_j$ is a livable cluster contained in $B'_j$ of diameter at least $N-1$, and as $j$ is good, it must be that $\CC_i \cap B'_j$ is contained in $\CC_j$.

Part (3) being easily checked, we now turn to part (4). After removing repetitions, the sequence $(i(\gamma_k))_{k \le l}$ forms a nearest-neighbour path starting in $\mclC_{i(x)}$ and ending out of it. Hence, there exists an index $k \le l$ such that $i(\gamma_k)$ is in the outer boundary of $\mclC_{i(x)}$, and in particular such that $i(\gamma_k)$ is good. As $\gamma_l \notin \Delta'(x)$, the path, which is in the box $B_{i(\gamma_k)}$ at time $k$, must then exit $B'_{i(\gamma_k)}$ through a sequence of at least $N/2$ livable sites. As $i(\gamma_k)$ is a good site, this is possible only if the path intersects $\CC_{i(\gamma_k)}$, which is a subset of $\Delta^g(x)$.

As concerns part (5), note first that if $j,k \in \ov{\partial}^* \mathcal{C}_i$ are neighbours, part (2) of the proposition ensures that $\CC_j \cup \CC_k$ is a connected set. By induction, one obtains that if there exists a $\Z^d$-nearest-neighbour path $\gamma = (\gamma_0,\ldots,\gamma_l)$ staying inside $\ov{\partial}^* \mathcal{C}_i$, then $\cup_k \CC_{\gamma_k}$ forms a connected set. The claim then follows from Proposition~\ref{p:geom}, which ensures that $\ov{\partial}^* \mathcal{C}_i$ is a $\Z^d$-connected set, so one can find a $\Z^d$-nearest-neighbour path covering $\ov{\partial}^* \mathcal{C}_i$. 
\end{proof}

%
%
%
%
%
%
%
%
\section{Point-to-point exponent without moment condition}
\label{s:ptpnomoment}
\setcounter{equation}{0}
The aim of this section is to prove part (3) of Theorem~\ref{t:existpoint}.
Let 
$$
\tau_g(y) = \inf \{ n \in \N : S_n \in \Delta^g(y) \}.
$$
We define
\begin{equation}
\label{deftda}
\td{a}(x,y) = - \log \EE_{x} \left[ \exp\left( -\sum_{n = 0}^{\tau_g(y) - 1} V(S_n) \right), \tau_g(y) < + \infty \right],
\end{equation}
and 
\begin{equation}
\label{defhata}
\hat{a}(x,y) = \min_{x' \in \Delta^g(x)} \td{a}(x',y).
\end{equation}
The quantity $\hata(x,y)$ is an approximation of $a(x,y)$ in the following sense.
\begin{prop}
\label{compahata}
Let $x \in \Z^d$. If there exists a nearest-neighbour path of livable sites connecting $x$ to $\Delta^g(x)$, let $\gamma_x$ be one of minimal length, chosen according to some deterministic and translation-invariant rule in case of ties. In this case, we define
\begin{equation}
\label{defux}
u(x) = \sum_{z \in \gamma_x \cup \Delta^g(x)} (V(z)+\log(2d)),
\end{equation}
and set $u(x) = +\infty$ otherwise. For any $x,y \in \Z^d$, one has
\begin{equation}
\label{triangleineq}
\hata(x,y) \le a(x,y) \le \hata(x,y) + u(x) + u(y).	
\end{equation}
\end{prop}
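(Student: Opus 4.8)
The plan is to establish the two inequalities of \eqref{triangleineq} separately, using the probabilistic interpretation of $a(x,y)$ as a weighted sum over random walk trajectories, exactly in the spirit of Remark~\ref{presquepreuve}. The left inequality $\hata(x,y) \le a(x,y)$ is the easy one: starting from $x$, a walk that reaches $y$ must in particular reach $y$ at all, and we want to relate this to the cost of reaching $\Delta^g(y)$. In fact we argue that any trajectory contributing to $e(x,y)$ visits only livable sites before time $H_y$ (since $V$ is $+\infty$ off the livable sites, such a trajectory contributes $0$ otherwise), hence by part (4) of Proposition~\ref{observ} it visits $\Delta^g(y)$ before (or at) time $H_y$ — unless $y \in \Delta'(y)$, but one checks $\Delta^g(y) \subset \Delta'(y)$ and the relevant geometry still forces a visit, or one treats that case directly. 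Truncating the trajectory at the first visit $\tau_g(y)$ to $\Delta^g(y)$ only drops a nonnegative sum $\sum_{n=\tau_g(y)}^{H_y-1} V(S_n) \ge 0$ from the exponent, so $e(x,y) \le e^{-\td a(x,y)}$; taking $x'=x \in \Delta^g(x)$ in \eqref{defhata} is not quite what we want, so instead we note $a(x,y) \ge \td a(x,y) \ge \hata(x,y)$ directly, since minimizing over the starting point in $\Delta^g(x)$ only decreases $\td a$ — wait, the minimum is over $x'$, so $\hata(x,y) = \min_{x'\in\Delta^g(x)} \td a(x',y) \le \td a(x,y)$ requires $x \in \Delta^g(x)$, which fails in general; so the clean route is: $a(x,y) \ge \td a(x,y)$ (truncation), and separately one shows $\td a(x,y) \ge \hata(x,y)$ is false in general too. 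The correct reading: one shows $a(x,y) \ge \hata(x,y)$ by first forcing the walk from $x$ to a minimizing $x' \in \Delta^g(x)$, but that costs extra; so in fact the left inequality must come from a different truncation — I would instead prove $\hata(x,y) \le a(x,y)$ by observing that a walk from $x$ reaching $y$ passes through some $x' \in \Delta^g(x)$ near the start is \emph{not} guaranteed. Let me restructure: the clean statement is that $e(x,y) \le e^{-\td a(x',y)}$ for a suitable $x'$, obtained by applying the strong Markov property at the last visit to $\Delta^g(x)$ is again awkward. The safe approach: since $x$ itself may not be in $\Delta^g(x)$, note $\Delta^g(x)$ is nonempty and $\Z^d$-connected; a trajectory from $x$ that ever reaches $y$ and stays livable must, if it leaves $\Delta'(x)$, pass through $\Delta^g(x)$; if it never leaves $\Delta'(x)$ then $y \in \Delta'(x)$ and one handles this finite-range case by brute force. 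Granting a first visit to $\Delta^g(x)$ at some time $\sigma$ with $S_\sigma = x'$, the Markov property at $\sigma$ gives $e(x,y) \le \sum_{x'} \EE_x[\exp(-\sum_0^{\sigma-1}V)\,;\,S_\sigma = x'] \cdot e(x',y) \le e(x',y)$ for the summand-maximizing $x'$, because the prefactor is $\le 1$; and then $e(x',y) \le e^{-\td a(x',y)} \le e^{-\hata(x,y)}$, yielding the left inequality.

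For the right inequality $a(x,y) \le \hata(x,y) + u(x) + u(y)$, I mimic the proof of \eqref{eq:approxam2} and \eqref{triangleam} from Proposition~\ref{approxam}. Pick the minimizing $x^\star \in \Delta^g(x)$ in \eqref{defhata}, so $\hata(x,y) = \td a(x^\star, y)$. We construct a cheap trajectory from $x$ to $y$ as a concatenation of three legs: (i) walk deterministically from $x$ along the path $\gamma_x$ of livable sites to $\Delta^g(x)$, then through $\Delta^g(x)$ (which is $\Z^d$-connected by Proposition~\ref{observ}(5)) to $x^\star$; (ii) run the optimal $\td a(x^\star,y)$-strategy, i.e.\ the walk realizing $\td a(x^\star,y)$ up to time $\tau_g(y)$, landing at some point of $\Delta^g(y)$; (iii) from that point of $\Delta^g(y)$, walk deterministically through $\Delta^g(y)$ (again $\Z^d$-connected) and then along $\gamma_y$ reversed to reach $y$. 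Forcing the walk to follow the prescribed deterministic legs costs a factor $(2d)^{-(\text{length})}$, i.e.\ an additive $\log(2d)$ per step, and the potential accumulated along legs (i) and (iii) is at most $\sum_{z \in \gamma_x \cup \Delta^g(x)}V(z) + \sum_{z \in \gamma_y \cup \Delta^g(y)}V(z)$; combined with the $\log(2d)$ terms this is exactly bounded by $u(x) + u(y)$ as defined in \eqref{defux}. Using the strong Markov property at the junction times to glue the legs, one gets $e^{-a(x,y)} \ge e^{-u(x)} e^{-\td a(x^\star,y)} e^{-u(y)}$, which is the desired inequality. When $u(x) = +\infty$ or $u(y) = +\infty$ the right inequality is trivial; one should also note that $\hata(x,y)$ could be $+\infty$ (if $\Delta^g(y)$ is unreachable in the relevant sense), in which case the middle term makes the bound vacuous, consistent with $a(x,y)$ possibly being $+\infty$ as well.

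The main obstacle is the bookkeeping in leg (ii): the strategy realizing $\td a(x^\star, y)$ stops at the \emph{first} entrance time $\tau_g(y)$ into $\Delta^g(y)$, and I must verify that the landing point is genuinely in $\Delta^g(y)$ and that concatenating with leg (iii) does not create a conflict where the walk has already consumed potential at sites that get revisited — but since we only need a \emph{lower} bound on $e^{-a(x,y)}$ we are free to further restrict the event (e.g.\ demand that leg (ii) land at a specific point $z_0 \in \Delta^g(y)$, and then leg (iii) is a fixed deterministic path from $z_0$), so revisits only help or are neutral because each extra visit to a site contributes another copy of $V$ which, since we already control the total over $\gamma_y \cup \Delta^g(y)$, is absorbed; the one genuinely delicate point is ensuring that the deterministic paths in legs (i) and (iii) exist and lie in livable territory, which is precisely why $u(x)$ is set to $+\infty$ exactly when no livable path from $x$ to $\Delta^g(x)$ exists, making the inequality automatic in that case. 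Everything else is a routine application of the Markov property and the "cost $= V + \log(2d)$ per forced step" principle already used for Proposition~\ref{approxam}.
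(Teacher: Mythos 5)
Your strategy coincides with the paper's: for $\hata(x,y)\le a(x,y)$, intercept the trajectory at its first visits to $\Delta^g(x)$ and $\Delta^g(y)$ using part (4) of Proposition~\ref{observ} and the Markov property; for the reverse inequality, force the walk along $\gamma_x$ and through the ($\Z^d$-connected) set $\Delta^g(x)$ to a minimizer $x^\star$, run the $\td{a}(x^\star,y)$-expectation up to $\tau_g(y)$, and exit through $\Delta^g(y)$ and $\gamma_y$. The second half is essentially correct, but note that the "delicate point" about the landing site of leg (ii) is \emph{not} resolved by demanding the walk land at a specific $z_0\in\Delta^g(y)$ (that would replace $e^{-\td{a}(x^\star,y)}$ by a possibly much smaller quantity); it is resolved by the uniform bound $\EE_{z}\big[\exp\big(-\sum_{n=0}^{H_y-1}V(S_n)\big),H_y<+\infty\big]\ge e^{-u(y)}$ valid for \emph{every} $z\in\Delta^g(y)$, each $z$ getting its own deterministic exit path of cost at most $u(y)$. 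This is exactly what the paper does.

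The genuine gap is the case you dismiss as "brute force": when $y\in\Delta'(x)$, part (4) of Proposition~\ref{observ} gives nothing, and there is no a priori reason why $\hata(x,y)\le a(x,y)$ should hold; moreover this is not a bounded "finite-range" situation, since $\mclC_{i(x)}$ can be arbitrarily large. The paper's resolution is to show that whenever $\ovmclC_{i(x)}$ and $\ovmclC_{i(y)}$ intersect or are adjacent (which covers $y\in\Delta'(x)$), one has $\Delta^g(x)\cap\Delta^g(y)\neq\emptyset$ --- via parts (1) and (2) of Proposition~\ref{observ}, either a common $i\in\ov{\partial}^*\mclC_{i(x)}\cap\ov{\partial}^*\mclC_{i(y)}$, or two neighbouring good sites $i_0,i_1$ with $\CC_{i_0}\cap\CC_{i_1}\neq\emptyset$ --- so that $\tau_g(y)=0$ $\PP_{x'}$-a.s.\ for a suitable $x'\in\Delta^g(x)$ and hence $\hata(x,y)=0$, making the left inequality trivial there. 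You need this observation anyway: your truncation $e(x',y)\le e^{-\td{a}(x',y)}$ relies on the reversed application of part (4) at $y$, which is legitimate only when $x'\notin\Delta'(y)$, i.e.\ precisely in the complementary case where the two cluster neighbourhoods are at macroscopic distance at least $2$. With that case supplied, your argument matches the paper's.
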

\begin{proof}
To prove the first inequality in (\ref{triangleineq}), we distinguish three cases.

\noindent \emph{Case 1.} If $\ovmclC_{i(x)} \cap \ovmclC_{i(y)} \neq \emptyset$, then we claim that $\ov{\partial}^* \mclC_{i(x)} \cap \ov{\partial}^* \mclC_{i(y)} \neq \emptyset$. Indeed, it is a consequence of part (1) of Proposition~\ref{observ} if $\mclC_{i(x)} \cap \mclC_{i(y)} \neq \emptyset$. Otherwise, let $i \in \ov{\partial}^* \mclC_{i(x)} \cap \ovmclC_{i(y)}$. As $i \in \ov{\partial}^* \mclC_{i(x)}$, it must be that $i$ belongs to $\mclC_\infty$, and this implies that $i \in \ov{\partial}^* \mclC_{i(y)}$, so the claim is justified.

Let $i \in \ov{\partial}^* \mclC_{i(x)} \cap \ov{\partial}^* \mclC_{i(y)}$. Then both $\Delta^g(x)$ and $\Delta^g(y)$ contain $\CC_{i}$, and thus $\hata(x,y) = 0 \le a(x,y)$.

\noindent \emph{Case 2.} Assume now that $d(\ovmclC_{i(x)}, \ovmclC_{i(y)}) = 1$, where for $A,B \subset \Z^d$, $d(A,B)$ is the minimal $L^1$-distance between any two points of $A$ and $B$. One can find $i_0 \in \ov{\partial}^* \mclC_{i(x)}$ and $i_1 \in \ov{\partial}^* \mclC_{i(y)}$ such that $i_0 \sim i_1$. Both $i_0$ and $i_1$ are good, and by part (2) of Proposition~\ref{observ}, the intersection $\CC_{i_0} \cap \CC_{i_1}$ is not empty, and thus we have again $\hata(x,y) = 0$.

\noindent \emph{Case 3.} There remains to consider the case when $d(\ovmclC_{i(x)}, \ovmclC_{i(y)}) \ge 2$. Let 
$$
\tau_\infty = \inf \{ n \in \N : V(S_n) = +\infty \} .
$$
From part (4) of Proposition~\ref{observ}, we know that conditionally on the event
\begin{equation}
\label{defmclE}
\mcl{E} = \{ H_y < +\infty\} \cap \{ H_y \le \tau_\infty \},	
\end{equation}
we have $\tau_g(x) < H_y$ $\PP_x$-a.s. This means that, in order to go from $x$ to $y$ through livable sites only, one must visit a site of $\Delta^g(x)$. By symmetry, one must also visit a site of $\Delta^g(y)$ prior to reaching $y$, and this must happen after the first visit to $\Delta^g(x)$. Recalling that we write $\Theta_t$ for the time shift $(\Theta_t S)_n = S_{n+t}$, we have
\begin{eqnarray*}
e^{-a(x,y)} & = & \EE_x\left[ \exp\left( -\sum_{n = 0}^{H_y - 1} V(S_n) \right), H_y < + \infty \right] \\
& = & \EE_x\left[ \exp\left( -\sum_{n = 0}^{H_y - 1} V(S_n) \right), \mcl{E} \right] \\
& \le & \EE_x\left[ \exp\left( -\sum_{n = \tau_g(x)}^{H_y - 1} V(S_n) \right) \ \1_\mcl{E} \circ \Theta_{\tau_g(x)} \right] \\
& \stackrel{(\text{Markov})}{\le} & \EE_x \EE_{S_{\tau_g(x)}} \left[ \exp\left( -\sum_{n = 0}^{H_y - 1} V(S_n) \right), \mcl{E} \right] \\
& \le & \max_{x' \in \Delta^g(x)} \EE_{x'} \left[ \exp\left( -\sum_{n = 0}^{H_y - 1} V(S_n) \right), \mcl{E} \right] \\
& \le & \max_{x' \in \Delta^g(x)} \EE_{x'} \left[ \exp\left( -\sum_{n = 0}^{\tau_g(y) - 1} V(S_n) \right), \tau_g(y) < + \infty \right],
\end{eqnarray*}
the last term being precisely $e^{-\hata(x,y)}$.

We now consider the second inequality in (\ref{triangleineq}). For a path $\gamma = (\gamma_0,\ldots,\gamma_l)$, recall that we write ``$x \in \gamma$'' if there exists $k \le l$ such that $x = \gamma_k$. We write ``$S = \gamma$'' if for every $k \le l$, one has $S_k = \gamma_k$. 

If it is not possible to connect $x$ to $\Delta^g(x)$, or $y$ to $\Delta^g(y)$, through a sequence of livable sites, then $u(x)$ or $u(y)$ is infinite, and there is nothing to prove. So we now assume that such paths $\gamma_x$, $\gamma_y$ do exist.

\noindent \emph{Cases 1-2.} If $d(\mclC_{i(x)},\mclC_{i(y)}) \le 1$, then we have seen in the beginning of this proof that $\hata(x,y) = 0$ and that $\Delta^g(x) \cap \Delta^g(y)$ is not empty. Let $z$ be a point in this intersection. By part (5) of Proposition~\ref{observ}, we know that $\Delta^g(x)$ and $\Delta^g(y)$ are $\Z^d$-connected sets, so there exists a path $\gamma^1$ connecting the end-point of $\gamma_x$ to $z$ that stays inside $\Delta^g(x)$, and a path $\gamma^2$ connecting $z$ to the end-point of $\gamma_y$ that stays inside $\Delta^g(y)$. By concatenating $\gamma_x$, $\gamma^1$, $\gamma^2$ and (reversed $\gamma_y$), we get a path that connects $x$ to $y$ and visits only points of $\gamma_x \cup \Delta^g(x) \cup \Delta^g(y) \cup \gamma_y$ (where in the last expression, we identify $\gamma_x$ with the set of points visited by $\gamma_x$). We suppress possible loops to get a simple path $\gamma$ that conserves this property. Note that
\begin{eqnarray*}
e^{-a(x,y)} & = & \EE_x\left[ \exp\left( -\sum_{n = 0}^{H_y - 1} V(S_n) \right), H_y < + \infty \right] \\
& \ge & \EE_x\left[ \exp\left( -\sum_{n = 0}^{H_y - 1} V(S_n) \right), S = \gamma \right] \\
& \ge & e^{-u(x) + u(y)},
\end{eqnarray*}
and we thus obtain that $a(x,y) \le u(x) + u(y) = \hata(x,y) + u(x) + u(y)$.

\noindent \emph{Case 3.} If $d(\mclC_{i(x)},\mclC_{i(y)}) \ge 2$, let $x' \in \Delta^g(x)$. We can construct a path that is the concatenation of $\gamma_x$ and a path that connects the endpoint of $\gamma_x$ to $x'$ while staying inside $\Delta^g(x)$, and make a simple path $\gamma_{x,x'}$ out of it. Similarly, for any $y' \in \Delta^g(y)$, one can construct a simple path $\gamma_{y',y}$ that connects $y'$ to $y$ and visits only points in $\gamma_y \cup \Delta^g(y)$.

Observe that
\begin{eqnarray*}
e^{-a(x,y)} & = & \EE_x\left[ \exp\left( -\sum_{n = 0}^{H_y - 1} V(S_n) \right), H_y < + \infty \right] \\
& \ge & \EE_x\left[ \exp\left( -\sum_{n = 0}^{H_y - 1} V(S_n) \right), S = \gamma_{x,x'}, H_y < + \infty \right].
\end{eqnarray*}
Using the Markov property at the entrance time of the walk at $x'$, we obtain that the latter is equal to 
$$
\exp\left( -\sum_{z \in \gamma_{x,x'} \setminus \{x'\}} (V(z) + \log(2d)) \right) \EE_{x'}\left[ \exp\left( -\sum_{n = 0}^{H_y - 1} V(S_n) \right), H_y < + \infty \right].
$$
The first exponential term is larger than $\exp(-u(x))$. Concerning the second term, we have
$$
\EE_{x'}\left[ \exp\left( -\sum_{n = 0}^{H_y - 1} V(S_n) \right), H_y < + \infty \right] 
 = \EE_{x'}\left[ \exp\left( -\sum_{n = 0}^{H_y - 1} V(S_n) \right), \mcl{E} \right],
$$
where $\mcl{E}$ is the event defined in (\ref{defmclE}). Similarly, the Markov property enables us to rewrite the latter as
\begin{equation}
\label{compmarkov}
\EE_{x'}\left[ \exp\left( -\sum_{n = 0}^{\tau_g(y) - 1} V(S_n) \right)
\underbrace{\EE_{S_{\tau_g(y)}}\left[ \exp\left( -\sum_{n = 0}^{H_y - 1} V(S_n) \right), \mcl{E} \right]}
, \tau_g(y) < \infty \right].
\end{equation}
The expectation that is underbraced above is larger than $\exp(-u(y))$. Indeed, for any $y' \in \Delta^g(y)$, one has
\begin{eqnarray*}
\EE_{y'}\left[ \exp\left( -\sum_{n = 0}^{H_y - 1} V(S_n) \right), \mcl{E} \right] & = & \EE_{y'}\left[ \exp\left( -\sum_{n = 0}^{H_y - 1} V(S_n) \right), H_y < + \infty \right] \\
& \ge & \EE_{y'}\left[ \exp\left( -\sum_{n = 0}^{H_y - 1} V(S_n) \right), S = \gamma_{y',y} \right] \\
& \ge & \exp(-u(y)).
\end{eqnarray*}
The quantity appearing in (\ref{compmarkov}) is thus larger than $\exp(-\td{a}(x',y) - u(y))$. We have shown that, for any $x' \in \Delta_g(x)$, one has
$$
a(x,y) \le \td{a}(x',y) + u(x) + u(y).
$$
Taking the minimum over $x'$ finishes the proof.
\end{proof}
The aim of the next two propositions is to verify that the family $(\hata(mx,nx))_{m<n}$ satisfies the conditions required to apply the subadditive ergodic theorem. 
\begin{prop}
\label{p:triangle}
Let 
\begin{equation}
\label{defv}
v(x) = \sum_{z \in \Delta^g(x)} (V(x) + \log(2d)).
\end{equation}
For any $x,y,z \in \Z^d$, one has the approximate subadditivity property
$$
\hata(x,z) \le \hata(x,y) + v(y) + \hata(y,z) .
$$
\end{prop}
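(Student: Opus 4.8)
The plan is to establish the equivalent bound $e^{-\td{a}(x^\star,z)}\ge e^{-\hata(x,y)}\,e^{-v(y)}\,e^{-\hata(y,z)}$ for a suitable point $x^\star$. Since $\hata(x,z)=\min_{x'\in\Delta^g(x)}\td{a}(x',z)$, I would fix $x^\star\in\Delta^g(x)$ with $\td{a}(x^\star,y)=\hata(x,y)$ and $y^\star\in\Delta^g(y)$ with $\td{a}(y^\star,z)=\hata(y,z)$; if $\hata(x,y)$ or $\hata(y,z)$ is infinite the claimed inequality is trivial, so both are taken finite. Then $\hata(x,z)\le\td{a}(x^\star,z)$, and it suffices to bound this last quantity by $\td{a}(x^\star,y)+v(y)+\hata(y,z)$. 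Recall that $\Delta^g(y)$ is almost surely a finite, $\Z^d$-connected set (by Proposition~\ref{tailradius} and part~(5) of Proposition~\ref{observ}) all of whose sites are healthy, so that $v(y)<+\infty$ and, for every $w\in\Delta^g(y)$, there is a simple nearest-neighbour path $\pi_w$ from $w$ to $y^\star$ with all vertices in $\Delta^g(y)$; fix one such $\pi_w$ for each $w$ by a deterministic rule.

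Next I would lower-bound $e^{-\td{a}(x^\star,z)}$ by restricting the defining expectation to a favourable family of trajectories. Write $\tau=\tau_g(y)$, and on $\{\tau<+\infty\}$ set $w=S_\tau\in\Delta^g(y)$. Let $\mcl{G}$ be the event that $\tau<+\infty$, that $S_{\tau+j}=(\pi_w)_j$ for $0\le j\le|\pi_w|$ (so that the walk sits at $y^\star$ at time $\tau+|\pi_w|$), and that the walk afterwards reaches $\Delta^g(z)$; on $\mcl{G}$ denote by $\sigma=\tau+|\pi_w|+\tau_g(z)\circ\Theta_{\tau+|\pi_w|}$ that last passage time. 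The crucial observation is that $\mcl{G}\subset\{\tau_g(z)<+\infty\}$ and that on $\mcl{G}$ one has $\tau_g(z)\le\sigma$: the walk may well enter $\Delta^g(z)$ already during the first or the second leg, but that only makes $\tau_g(z)$ smaller. Since $V\ge 0$, this yields
\[
e^{-\td{a}(x^\star,z)}=\EE_{x^\star}\!\Big[\exp\Big(-\!\sum_{n=0}^{\tau_g(z)-1}\!V(S_n)\Big),\ \tau_g(z)<+\infty\Big]\ \ge\ \EE_{x^\star}\!\Big[\exp\Big(-\!\sum_{n=0}^{\sigma-1}\!V(S_n)\Big),\ \mcl{G}\Big].
\]

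It then remains to evaluate the right-hand side by the strong Markov property, applied first at $\tau$ and then at $\tau+|\pi_w|$. Conditionally on reaching $\Delta^g(y)$ at $w$, the middle (forced) leg contributes the deterministic weight $(2d)^{-|\pi_w|}\exp(-\sum_{j<|\pi_w|}V((\pi_w)_j))$; since $\pi_w$ is a simple path inside $\Delta^g(y)$ its vertices are distinct and lie in $\Delta^g(y)$, so $|\pi_w|\le|\Delta^g(y)|$ and this weight is at least $(2d)^{-|\Delta^g(y)|}\prod_{z'\in\Delta^g(y)}e^{-V(z')}=e^{-v(y)}$. The last leg, restarted at $y^\star$, contributes exactly $\EE_{y^\star}[\exp(-\sum_{n<\tau_g(z)}V(S_n)),\ \tau_g(z)<+\infty]=e^{-\td{a}(y^\star,z)}=e^{-\hata(y,z)}$. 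As these two factors are bounded below by $e^{-v(y)}e^{-\hata(y,z)}$ independently of $w$, pulling this constant out leaves the first-leg factor $\EE_{x^\star}[\exp(-\sum_{n<\tau_g(y)}V(S_n)),\ \tau_g(y)<+\infty]=e^{-\td{a}(x^\star,y)}$. Altogether $e^{-\td{a}(x^\star,z)}\ge e^{-\td{a}(x^\star,y)}e^{-v(y)}e^{-\hata(y,z)}$, which is what we wanted.

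The step I expect to be the genuine obstacle — and the reason for the device in the displayed inequality — is that there is no honest triangle inequality of the form $\td{a}(x^\star,z)\le\td{a}(x^\star,y)+(\cdots)+\hata(y,z)$ obtained by naively restarting the walk at $S_{\tau_g(y)}$ and gluing on the weight $e^{-\td{a}(\cdot,z)}$: the concatenated trajectory may revisit $\Delta^g(z)$, so that this procedure over-counts and can produce a quantity strictly larger than $e^{-\td{a}(x^\star,z)}$. Replacing the genuine hitting time $\tau_g(z)$ by the possibly larger time $\sigma$ inside the exponential — legitimate precisely because the potentials are non-negative — is exactly what keeps the inequality pointing the right way while splitting the Markov decomposition into three clean, independent legs; everything else is routine bookkeeping of the kind already carried out in the proof of Proposition~\ref{compahata}.
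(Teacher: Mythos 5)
Your proof is correct and follows essentially the same route as the paper: restrict the expectation defining $e^{-\td{a}(x',z)}$ to trajectories that first hit $\Delta^g(y)$, then follow a forced simple path inside $\Delta^g(y)$ to the minimizer of $\td{a}(\cdot,z)$, then reach $\Delta^g(z)$, apply the Markov property at the two junction times, and use $V\ge 0$ to absorb the fact that the true $\tau_g(z)$ may be earlier than the concatenated time. The only cosmetic difference is that the paper proves the bound for every $x'\in\Delta^g(x)$ and minimizes at the end, whereas you pick the minimizing $x^\star$ up front; your explicit remark about $\tau_g(z)\le\sigma$ makes precise a point the paper leaves implicit.
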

\begin{proof}
To lighten the notation, we write $\1_g(x)$ for $\1_{\{\tau_g(x) < +\infty  \}}$. Let $x'$ be any element of $\Delta^g(x)$, and observe that
\begin{equation*}
\begin{array}{l}
e^{-\td{a}(x',z)} \\
\displaystyle{ \quad =  \EE_{x'} \left[ \exp\left( -\sum_{n = 0}^{\tau_g(z) - 1} V(S_n) \right)\1_g(z) \right]} \\
\displaystyle{ \quad  \ge \EE_{x'} \left[ \exp\left( -\sum_{n = 0}^{\tau_g(z) - 1} V(S_n) \right) \1_g(y) \1_g(z) \circ \Theta_{\tau_g(y)} \right] }\\
\displaystyle{ \quad  \ge  \EE_{x'} \left[ \exp\left( -\sum_{n = 0}^{\tau_g(y) - 1} V(S_n) \right) \underbrace{\EE_{S_{\tau_g(y)}}\left[ \exp\left( -\sum_{n = 0}^{\tau_g(z) - 1} V(S_n) \right) \1_g(z)   \right]} \1_g(y) \right]},
\end{array}
\end{equation*}
where we used the Markov property on the last step. Let us see that the quantity underbraced above is almost surely larger than $e^{ - v(y) - \hat{a}(y,z)}$. Let $y'$ be any element of $\Delta^g(y)$ (recall that this set contains $S_{\tau_g(y)}$ a.s. if $\tau_g(y) < \infty$), and let $\ov{y} \in \Delta^g(y)$ be such that $\td{a}(\ov{y},z)$ is minimal, that is, $\ov{y}$ satisfying $\td{a}(\ov{y},z) = \hat{a}(y,z)$. Part (5) of Proposition~\ref{observ} ensures that $\Delta^g(y)$ is a $\Z^d$-connected set, so let $\gamma$ be a simple nearest-neighbour path contained in $\Delta^g(y)$ and linking $y'$ to $\ov{y}$. We have
\begin{multline*}
\EE_{y'}\left[ \exp\left( -\sum_{n = 0}^{\tau_g(z) - 1} V(S_n) \right) \1_g(z)   \right] \\
\ge \EE_{y'}\left[ \exp\left( -\sum_{n = 0}^{\tau_g(z) - 1} V(S_n) \right) \1_g(z), S = \gamma   \right].
\end{multline*}
Applying the Markov property at the time when $S$ hits the point $\ov{y}$, we obtain that the latter is equal to
$$
\exp\left( -\sum_{y'' \in \gamma \setminus \{\ov{y}\}} (V(y'') + \log(2d)) \right) e^{-\td{a}(\ov{y},z)} \ge e^{ - v(y) - \hat{a}(y,z)},
$$
so we have shown that, for any $x' \in \Delta^g(x)$,
$$
e^{-\td{a}(x',z)} \ge e^{-\td{a}(x',y) - v(y) - \hat{a}(y,z)},
$$
or equivalently, that
\begin{equation}
\label{firststepsubad}
\td{a}(x',z) \le \td{a}(x',y) + v(y) + \hat{a}(y,z).	
\end{equation}
We arrive at the conclusion of the proposition after a minimization over $x'$.
\end{proof}
\begin{prop}
\label{integrability}	
There exists $c > 0$ such that for any $x \in \Z^d$ and any $t \ge 0$, one has
$$
\P[v(x) \ge t] \le e^{-ct^{1-1/d}}.
$$
Moreover, for any $x \in \Z^d$ and any nearest-neighbour path $\gamma$ connecting $0$ to $x$, one has
$$
\hata(0,x) \le \sum_{z \in \gamma} v(z).
$$
\end{prop}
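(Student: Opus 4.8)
The plan is to treat the two assertions separately: the first is an immediate consequence of Proposition~\ref{tailradius}, and the second is a matter of chaining the approximate subadditivity of Proposition~\ref{p:triangle} along $\gamma$.

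For the tail bound on $v(x)$, I would first note that $\Delta^g(x) = \bigcup_{j \in \ov{\partial}^* \mclC_{i(x)}} \CC_j$ is a union of at most $|\ovmclC_{i(x)}|$ crossing clusters (since $\ov{\partial}^* \mclC_{i(x)} \subseteq \ovmclC_{i(x)}$), each contained in some box $B'_j$ and hence of cardinality at most $(3N+1)^d$, and each consisting only of healthy sites, i.e.\ of sites with potential $\le M$. Consequently
\begin{equation*}
v(x) \le (M + \log(2d))\,|\Delta^g(x)| \le (M + \log(2d))\,(3N+1)^d\,|\ovmclC_{i(x)}| =: C_0\,|\ovmclC_{i(x)}|,
\end{equation*}
so that $\P[v(x) \ge t] \le \P[|\ovmclC_{i(x)}| \ge t/C_0]$, and Proposition~\ref{tailradius} gives the stretched-exponential bound with a suitably modified constant $c$ (the small-$t$ range being trivial since probabilities are at most $1$).

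For the second assertion, the crucial point --- and the only one that is not bookkeeping --- is that moving by one microscopic step costs nothing in the $\hata$ metric: if $x \sim y$ then $\hata(x,y) = 0$. To see this, I would observe that part (3) of Proposition~\ref{observ} gives $i(x) = i(y)$ or $i(x) \sim i(y)$, and since $i(z) \in \ovmclC_{i(z)}$ for every $z$ (this set equals $\{i(z)\}$ if $i(z) \in \mclC_\infty$ and contains $\mclC_{i(z)} \ni i(z)$ otherwise), it follows that $d(\ovmclC_{i(x)}, \ovmclC_{i(y)}) \le 1$. Arguing exactly as in Cases~1 and~2 of the proof of Proposition~\ref{compahata}, this forces $\Delta^g(x) \cap \Delta^g(y) \neq \emptyset$: either $\ov{\partial}^* \mclC_{i(x)}$ and $\ov{\partial}^* \mclC_{i(y)}$ share an index $j$, and then $\CC_j$ lies in both $\Delta^g(x)$ and $\Delta^g(y)$, or one finds neighbouring good indices $j_0 \in \ov{\partial}^* \mclC_{i(x)}$ and $j_1 \in \ov{\partial}^* \mclC_{i(y)}$, and then $\CC_{j_0} \cap \CC_{j_1} \neq \emptyset$ by part (2) of Proposition~\ref{observ}. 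Choosing $x'$ in this intersection, the walk started at $x'$ has $S_0 \in \Delta^g(y)$, hence $\tau_g(y) = 0$ and $\td{a}(x',y) = 0$, so $\hata(x,y) = \min_{x'' \in \Delta^g(x)} \td{a}(x'',y) = 0$ (recall $\hata \ge 0$).

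Finally, I would chain these facts along $\gamma = (\gamma_0, \ldots, \gamma_l)$, which we may take to be a simple path (extracting a subpath if needed, which only decreases $\sum_{z \in \gamma} v(z)$ since $v \ge 0$). Iterating Proposition~\ref{p:triangle}, peeling off one vertex at a time, yields
\begin{equation*}
\hata(0,x) \le \sum_{k=0}^{l-1} \hata(\gamma_k, \gamma_{k+1}) + \sum_{k=1}^{l-1} v(\gamma_k),
\end{equation*}
and by the previous paragraph every term in the first sum vanishes, so the right-hand side is at most $\sum_{k=0}^{l} v(\gamma_k) = \sum_{z \in \gamma} v(z)$. I expect no real obstacle beyond carefully justifying the vanishing of $\hata$ across a single microscopic step, which is a pure consequence of the renormalized geometry set up in Proposition~\ref{observ}; everything else follows from Propositions~\ref{tailradius} and~\ref{p:triangle}.
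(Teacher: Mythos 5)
Your proof is correct and follows essentially the same route as the paper's: bound $v(x)$ by a constant times $|\ovmclC_{i(x)}|$ using that every site of $\Delta^g(x)$ is healthy and invoke Proposition~\ref{tailradius}, then observe that $\hata$ vanishes across a single microscopic step (via $d(\ovmclC_{i(x)},\ovmclC_{i(y)})\le 1$ and Cases 1--2 of the proof of Proposition~\ref{compahata}) and iterate Proposition~\ref{p:triangle} along $\gamma$. The paper states these two observations more tersely, but the content is identical.
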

\begin{proof}
Recalling that any site in $\Delta^g(x)$ is healthy, we have
\begin{equation}
\label{upperboundv}
v(x) \le (M+\log(2d))|\Delta^g(x)| \le (M+\log(2d)) (2N+1)^d | \ovmclC_{i(x)} |,	
\end{equation}
so the first claim follows from Proposition~\ref{tailradius}. 

For the second claim, observe first that if $x$ and $y$ are neighbours, then clearly $d(\ovmclC_{i(x)},\ovmclC_{i(y)}) \le 1$, and we have seen in the proof of Proposition~\ref{compahata} that in this case, $\hata(x,y) = 0$. The second claim thus follows by iterating the inequality obtained in Proposition~\ref{p:triangle} along the path $\gamma$.
\end{proof}
\begin{proof}[Proof of part (3) of Theorem~\ref{t:existpoint}]
Let $x \in \Z^d \setminus \{ 0 \}$. Proposition~\ref{p:triangle} ensures that, for any $m,n,p \in \N$, 
$$
\hata(mx,nx) + v(nx) \le \hata(mx,px) + v(px) + \hata(px,nx) + v(nx),
$$
and $\hata(mx,nx) + v(nx)$ is integrable by Proposition~\ref{integrability}. The doubly indexed sequence $(\hata(mx,nx) + v(nx))_{m < n}$ is thus a stationary and integrable subadditive process. As a consequence, under assumption (H) only, 
$$
\frac{1}{n} \ll( \hata(0,nx) + v(nx) \rr)
$$
converges almost surely. We let $\hat{\alpha}(x)$ be the limit. By the same reasoning as in the proof of part (2) of Theorem~\ref{t:existpoint} (in section~\ref{s:ptpmoment}), we get that $\hat{\alpha}$ can be extended into a norm on $\R^d$ which satisfies
\begin{equation}
\label{lowerboundonalpha}
\hat{\alpha}(x) \ge -\|x\|_1 \log \E\ll[e^{-V(0)}\rr].	
\end{equation}

The random variables $(v(nx))_{n \in \N}$ being identically distributed and integrable, a Borel-Cantelli argument gives us that
$$
\frac{1}{n} \hata(0,nx) \xrightarrow[n \to + \infty]{\text{a.s.}} \hat{\alpha}(x).
$$
From Proposition~\ref{compahata}, we know that
$$
\ll| a(0,nx) - \hata(0,nx) \rr| \le u(0) + u(nx).
$$
The random variables $(u(nx))_{n \in \N}$ are identically distributed, and are almost surely finite when $V(0) < + \infty$ a.s., and thus
\begin{equation}
\label{aprob}
\frac{1}{n} a(0,nx) \xrightarrow[n \to + \infty]{\text{prob.}} \hat{\alpha}(x)	
\end{equation}
under this assumption. 
\end{proof}

\begin{rem}
Note that whenever the norm $\alpha$ introduced in section~\ref{s:ptpmoment} is well defined (that is, when $\E[Z(0)] < + \infty$), it follows from (\ref{aprob}) and part (2) of Theorem~\ref{t:existpoint} that $\hat{\alpha} = \alpha$. In other words, $\hat{\alpha}$ extends $\alpha$ to more general distributions of the potential, and we may as well put $\alpha = \hat{\alpha}$ whenever $\hat{\alpha}$ is well defined. With this definition, we have thus proved that under assumption (H) only, 
\begin{equation}
\label{hataasconv}
\frac{1}{n} \hata(0,nx) \xrightarrow[n \to + \infty]{\text{a.s.}} \alpha(x).
\end{equation}
\end{rem}

%
%
%
%
%
%
%
%
\section{Shape theorem without moment condition}
\label{s:shapenomoment}
\setcounter{equation}{0}
We now want to justify parts (2) and (3) of Theorem~\ref{t:shape}. Our first step is the derivation of a shape theorem for the set
$$
\hat{A}_t = \{x \in \Z^d : \hata(0,x) \le t \}.
$$
\begin{prop}
\label{p:shapehata}
Let $\hatA_t^\circ = \hatA_t + [0,1)^d$. Under assumption (H) only, for any $\eps > 0$, one has
$$
\forall \eps > 0, \quad \P[(1-\eps)K \subset t^{-1} \hatA_t^\circ \subset (1+\eps)K \text{ for all } t \text{ large enough} ]=  1	
$$
\end{prop}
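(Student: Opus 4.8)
\medskip

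\noindent\textbf{Strategy.} The plan is to transcribe the cone arguments used for $a_m$ in Propositions~\ref{coverball} and~\ref{shape2borne}, with the tail estimate of Proposition~\ref{controltail} replaced by the path bound and stretched-exponential tail of Proposition~\ref{integrability}, and Proposition~\ref{radiallimits} replaced by (\ref{hataasconv}). Two preparatory facts are needed. First, (\ref{hataasconv}), which is stated for $x\in\Z^d$, should be extended to $x\in\Q^d$: writing $x=p/q$ and comparing $\hata(0,\lfloor nx\rfloor)$ with $\hata(0,\lfloor n/q\rfloor p)$ through a bounded-length application of Proposition~\ref{p:triangle}, the discrepancy is dominated by a bounded number of terms $v(\cdot)$, each identically distributed with finite mean, hence is $o(n)$ almost surely --- this is exactly the reasoning behind Proposition~\ref{radiallimits}. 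Second, iterating Proposition~\ref{p:triangle} along a nearest-neighbour path $\gamma$ from $z$ to $w$, and using that $\hata(\zeta,\zeta')=0$ when $\zeta\sim\zeta'$ (seen in the proof of Proposition~\ref{compahata}), gives the deterministic bound $\hata(z,w)\le\sum_{\zeta\in\gamma}v(\zeta)$ for every such $\gamma$.

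\medskip

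\noindent\textbf{Two Borel--Cantelli inputs.} Since $\P[v(0)\ge t]\le e^{-ct^{1-1/d}}$ and $(v(x))_{x\in\Z^d}$ has finite-range dependence --- with a random, stretched-exponentially small range, controlled via Proposition~\ref{tailradius} --- a concentration estimate for sums of subexponential variables should yield a constant $K$ such that $\P[\sum_{\zeta\in\gamma}v(\zeta)>K\ell]\le e^{-c'\ell^{1-1/d}}$ for every nearest-neighbour path $\gamma$ of length $\ell$. Summing over the polynomially many endpoints at $\|\cdot\|_1$-distance $\ell$ from the origin and over $\ell$, Borel--Cantelli gives that, almost surely, $\hata(0,x)\le K\|x\|_1$ for all but finitely many $x\in\Z^d$ --- the analogue of (\ref{2c1}); together with (\ref{lowerboundonalpha}) this confines $t^{-1}\hatA_t^\circ$ to a fixed multiple of $K$ for all $t$ large. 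Applying the same estimate to a path joining $\lfloor\sigma_{\delta,y}x\rfloor$ to $y$, of length at most $\delta\sigma_{\delta,y}+2d$ by (\ref{equalsigma}), and using $\sigma_{\delta,y}\ge\|y\|_1/(\delta+\|x\|_1)$ as in (\ref{sigmage}), a second Borel--Cantelli argument gives, for each fixed $x\in\Q^d$ and each $\delta$ small enough (depending on $\eps$ and $\E[v(0)]$): for all but finitely many $y\in\mcl{D}_{\delta,x}$, one has $\hata(\sigma_{\delta,y}x,y)\le\eps\,\sigma_{\delta,y}$, and --- directly from the tail of $v$ --- also $v(y)\le\eps\,\sigma_{\delta,y}$ and $v(\sigma_{\delta,y}x)\le\eps\,\sigma_{\delta,y}$.

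\medskip

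\noindent\textbf{The cone arguments.} With these inputs the proof runs as in Propositions~\ref{coverball} and~\ref{shape2borne}. For the lower inclusion, I fix $\eps>0$ and $x\in\Q^d$ with $\alpha(x)\le1-4\eps$, and combine the radial estimate $\hata(0,\sigma_{\delta,y}x)\le(\alpha(x)+\eps)\sigma_{\delta,y}$ with $\hata(0,y)\le\hata(0,\sigma_{\delta,y}x)+v(\sigma_{\delta,y}x)+\hata(\sigma_{\delta,y}x,y)$ (Proposition~\ref{p:triangle}) to get $\hata(0,y)\le(1-\eps)\sigma_{\delta,y}$ off a finite set of $y\in\mcl{D}_{\delta,x}$; covering $(1-4\eps)K$ by a ball $D(0,\delta_2)$ with $\delta_2<1/K$ (supplied by the $K\|\cdot\|_1$ bound near the origin) together with finitely many such cones then yields $(1-\eps)K\subset t^{-1}\hatA_t^\circ$ for all $t$ large, exactly as in Proposition~\ref{coverball}. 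For the upper inclusion, I fix $x\in\Q^d$ with $\alpha(x)\ge1+4\eps$, and use $\hata(0,\sigma_{\delta,y}x)\ge(\alpha(x)-\eps)\sigma_{\delta,y}$ together with $\hata(0,\sigma_{\delta,y}x)\le\hata(0,y)+v(y)+\hata(y,\sigma_{\delta,y}x)$ to obtain $\hata(0,y)\ge(1+\eps)\sigma_{\delta,y}$ off a finite set of $y$; since $t^{-1}\hatA_t^\circ$ lies in a fixed multiple of $K$ for $t$ large, a finite covering of the corresponding compact region of points with $\alpha\ge 1+3\eps$ by such cones forces $t^{-1}\hatA_t^\circ\subset(1+\eps)K$ for all $t$ large, as in Proposition~\ref{shape2borne}. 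Intersecting the finitely many almost-sure events produced at each stage gives the statement for the fixed $\eps$, hence for all.

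\medskip

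\noindent\textbf{Main difficulty.} The only genuinely new ingredient is the concentration estimate $\P[\sum_{\zeta\in\gamma}v(\zeta)>K\ell]\le e^{-c'\ell^{1-1/d}}$: $v$ has merely stretched-exponential tails, not exponential, and $v(z)$ depends on the potential in a region of random (though stretched-exponentially small) diameter. I expect to have to decompose $\gamma$ into well-separated sub-blocks, so that the corresponding partial sums become independent up to a controlled error, and then exploit the fact that a sum of $\ell$ subexponential summands exceeds $K\ell$ essentially only through one large summand, of probability at most $\ell\,e^{-c\ell^{1-1/d}}$ after the blocking. Everything downstream is a routine transcription of the arguments already carried out for $a_m$ in Sections~\ref{s:ptpmoment} and~\ref{s:shapemoment}.
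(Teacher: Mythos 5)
Your overall architecture is sound and matches the paper's at the top level: both proofs rerun the cone/covering arguments of Propositions~\ref{coverball} and~\ref{shape2borne}, and both reduce everything to a single summable linear tail bound of the form $\sum_{x}\P[\hata(0,x)\ge\mcl{K}\|x\|_1]<+\infty$ (the paper packages the error terms by working with the exactly subadditive $\ov{a}(x,y)=\hata(x,y)+v(y)$, which is cosmetically cleaner than carrying the $v(\cdot)$ corrections explicitly, but equivalent). Where you genuinely diverge is on how that tail bound is obtained, and this is precisely the step you flag as your ``main difficulty''. The paper does \emph{not} go through concentration of $\sum_{\zeta\in\gamma}v(\zeta)$: instead it proves Proposition~\ref{controltailH}, an \emph{exponential} bound $\P[\hata(0,x)\ge\mcl{K}\|x\|_1]\le e^{-C\|x\|_1}$, by invoking the Antal--Pisztora chemical-distance estimate \eqref{e:anpi} --- points of $\Delta^g(0)$ and $\Delta^g(x)$ lie in the infinite \emph{healthy} cluster, hence (off an exponentially unlikely event) are joined by a healthy path of length $O(\|x\|_1)$, along which the cost per step is at most $M+\log(2d)$. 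This sidesteps both the dependence structure of the $v$'s and their heavy tails. Your route can be made to work, but be aware that your claimed rate $e^{-c'\ell^{1-1/d}}$ is almost certainly too strong as stated: a single ``fat'' bad macroscopic cluster of cardinality $S$ straddling the path contributes to roughly $\mathrm{diam}$-many consecutive summands, each of order $S$, so its total contribution is superlinear in $S$; the one-large-cluster mechanism then exceeds $K\ell$ already for $S$ of order a fractional power of $\ell$, which degrades the stretched exponent (to something like $\ell^{(d-1)/(d+1)}$ rather than $\ell^{1-1/d}$). Any rate that beats the polynomial entropy $\ell^{d-1}$ still suffices for your Borel--Cantelli, so this is a quantitative correction rather than a fatal gap, but the blocking argument (with truncated, locally determined versions of $|\ovmclC_i|$ to restore independence) is appreciably more delicate than your sketch suggests --- which is presumably why the paper prefers the chemical-distance shortcut. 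The remaining ingredients you list (extension of \eqref{hataasconv} to $\Q^d$ via Proposition~\ref{p:triangle} and integrability of $v$, the path bound from Proposition~\ref{integrability}, and the transcription of the two cone arguments) coincide with the paper's.
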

The structure of the proof is very similar to what was done in section~\ref{s:shapemoment} for $A_{m,t}$. In fact, the only thing that we need is some equivalent of Proposition~\ref{controltail}.
\begin{prop}
\label{controltailH}
Under assumption (H) only, there exists $\mcl{K}, C > 0$ such that
$$
\P[\hata(0,x) \ge \mcl{K} \|x\|_1] \le e^{-C \|x\|_1}.
$$	
\end{prop}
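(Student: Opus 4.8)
The plan is to show that $\hata(0,x)$ can be controlled by a sum of the $v(\cdot)$-costs along a suitably chosen path in $\Z^d$ from $0$ to $x$, and then to exploit the stretched-exponential tail of $v$ from Proposition~\ref{integrability} together with the stochastic domination of Proposition~\ref{p:lss} to get the exponential bound. The second claim of Proposition~\ref{integrability} already gives, for \emph{any} nearest-neighbour path $\gamma$ from $0$ to $x$,
$$
\hata(0,x) \le \sum_{z \in \gamma} v(z),
$$
so the task reduces to producing \emph{one} path $\gamma$ of length $O(\|x\|_1)$ along which $\sum_{z \in \gamma} v(z) \le \mcl{K}\|x\|_1$ with probability at least $1 - e^{-C\|x\|_1}$.

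\textbf{Step 1: reduce to the macroscopic scale.} The variable $v(z)$ depends only on the renormalized configuration through $\ovmclC_{i(z)}$, via the bound $v(z) \le (M + \log(2d))(2N+1)^d |\ovmclC_{i(z)}|$ established in~(\ref{upperboundv}). So it suffices to bound $\sum_{i \in \Gamma} |\ovmclC_i|$ for a path $\Gamma$ of macroscopic sites from $i(0)$ to $i(x)$ of length comparable to $\|i(x) - i(0)\|_1 = O(\|x\|_1/N)$; then lifting $\Gamma$ back to a microscopic path (each macroscopic step corresponds to $O(N)$ microscopic steps, and along each step the relevant $v$-values are all dominated by the same $|\ovmclC_i|$'s) gives the claimed microscopic bound, after adjusting $\mcl{K}$.

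\textbf{Step 2: a large-deviation estimate for sums of cluster sizes along a path.} By Proposition~\ref{p:lss}, the good-site indicators dominate a Bernoulli$(p)$ field with $p$ as close to $1$ as we like; and by Proposition~\ref{tailradius}, $\P[|\ovmclC_i| \ge t] \le e^{-ct^{1-1/d}}$. The key point is that the variables $(|\ovmclC_i|)_i$ have \emph{range-dependent} correlations: $|\ovmclC_i|$ and $|\ovmclC_j|$ are not independent, but $|\ovmclC_i|$ is a measurable function of the bad sites within $L^1$-distance $|\ovmclC_i|$ of $i$, so conditioning on $|\ovmclC_i| \le r$ makes the dependence $r$-local. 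One then uses a standard argument (e.g. a union bound over the ways the ``large'' clusters can be arranged along $\Gamma$, combined with the BK-type / finite-energy structure) to show that for a fixed path $\Gamma$ of macroscopic length $\ell$,
$$
\P\Bigl[\, \sum_{i \in \Gamma} |\ovmclC_i| \ge A\ell \,\Bigr] \le e^{-C' \ell}
$$
for $A$ chosen large enough (depending on $N$, $p$, $c$). Since $\ell \asymp \|x\|_1 / N$, and we may even fix $\Gamma$ to be a straight-line-ish path from $i(0)$ to $i(x)$ (so no union over paths is needed), this already yields the desired $e^{-C\|x\|_1}$.

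\textbf{Main obstacle.} The genuinely delicate point is Step~2: the $|\ovmclC_i|$ are dependent, and the dependence range is itself the random quantity being summed, so one cannot simply invoke a Chernoff bound for independent variables. The cleanest route is probably to fix a large threshold $R$, split each $|\ovmclC_i|$ as (contribution when $\le R$) plus (contribution when $> R$); the first part is a sum of bounded $R$-dependent variables, handled by a block/Bernstein argument, while for the second part one observes that $\{|\ovmclC_i| > R\}$ forces a $*$-connected set of at least $(\mcl{I}R)^{1-1/d}$ bad sites near $i$, and these events for well-separated $i$ are controlled by the domination of Proposition~\ref{p:lss}; a union bound over the locations and sizes of such exceptional clusters intersecting $\Gamma$, using $p$ close enough to $1$, shows their total contribution is $\le \eta \ell$ except with probability $e^{-C'\ell}$. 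Choosing $R$ large, then $\eta$ small, then $A$ accordingly, closes the estimate. This is exactly the renormalization-with-holes bookkeeping that the rest of the paper is built around, so no new idea is needed beyond assembling these pieces carefully.
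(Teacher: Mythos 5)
There is a genuine gap in Step 2, and it is fatal for the stated exponential bound. The summands $|\ovmclC_i|$ have only \emph{stretched}-exponential tails: Proposition~\ref{tailradius} gives $\P[|\ovmclC_i|\ge t]\le e^{-ct^{1-1/d}}$, and this exponent is essentially sharp, since the event that the entire $*$-inner boundary of a ball of radius $r$ around $i$ consists of bad sites has probability of order $e^{-c r^{d-1}}$ and forces $|\ovmclC_i|\gtrsim r^d$. Consequently, for a fixed macroscopic path $\Gamma$ of length $\ell$,
$$
\P\Bigl[\,\sum_{i\in\Gamma}|\ovmclC_i|\ge A\ell\Bigr]\ \ge\ \P\bigl[|\ovmclC_{i_0}|\ge A\ell\bigr]\ \ge\ e^{-c\,(A\ell)^{1-1/d}}
$$
for some $i_0\in\Gamma$, which is much larger than $e^{-C'\ell}$. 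The one-big-cluster scenario defeats your claimed estimate, and your truncation scheme does not repair it: the ``exceptional'' contribution from clusters of size $>R$ can be concentrated in a single cluster of size $\eta\ell$ at stretched-exponential (not exponential) cost. The best your route can yield is a bound of order $e^{-C\|x\|_1^{1-1/d}}$. (That weaker bound would in fact still be summable and hence sufficient for the application in Proposition~\ref{p:shapehata}, but it does not prove the proposition as stated.)

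The paper's proof avoids summing $v$ along a path altogether. If $\ovmclC_{i(0)}\cap\ovmclC_{i(x)}\neq\emptyset$ then $\hata(0,x)=0$; otherwise one picks $y\in\Delta^g(0)$ and $x'\in\Delta^g(x)$ inside $B(0,\|x\|_1)$, observes that both lie in the infinite \emph{healthy} cluster $\mathbf{C}'_\infty$ (since $\bigcup_{i\in\mclC_\infty}\CC_i\subset\mathbf{C}'_\infty$), and invokes the Antal--Pisztora chemical-distance estimate \eqref{e:anpi}: outside an event of probability $e^{-C\|x\|_1}$, $y$ and $x'$ are joined by a healthy path $\gamma$ of length at most $\varrho'\|x\|_1$. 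Forcing the walk along $\gamma$ gives $\hata(0,x)\le\td a(y,x)\le|\gamma|\,(M+\log(2d))$. The crucial point is that on a \emph{healthy} path the per-site cost is deterministically bounded by $M+\log(2d)$, so the only random quantity is the path length, for which a genuinely exponential error bound is available; this is exactly the feature your argument lacks. If you want to salvage your approach, you would need to replace the arbitrary lift of a straight macroscopic path by a path that stays on healthy sites, which is precisely what the chemical-distance input provides.
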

\begin{proof}
Let us write $\mathbf{C}_\infty'$ for the (unique) infinite healthy cluster. It is proved in \cite[Theorem~1.1]{anpi} that there exists $C,\varrho > 0$ such that for any $z \in \Z^d$,
\begin{multline}
\label{e:anpi}
\P[0,z \in \mathbf{C}'_\infty \text{but no healthy path of length less than } \varrho \|z\|_1 \text{ connects } 0 \text{ to } z] \\
\le e^{-C\|z\|_1}.
\end{multline}

In particular, outside of an event of exponentially small probability (that is, of probability less than $e^{-C' \|x\|_1}$ for some $C' > 0$), any two points in $\mathbf{C}_\infty' \cap B(0,\|x\|_1)$ can be connected by a healthy path of length smaller than $\varrho' \|x\|_1$ for some fixed $\varrho'$.

If $\ov{\mcl{C}}_{i(0)} \cap \ov{\mcl{C}}_{i(x)} \neq \emptyset$, then we have seen in the beginning of the proof of Proposition~\ref{compahata} that $\hata(0,x) = 0$. Otherwise, one can find $y \in \Delta^g(0) \cap B(0,\|x\|_1)$ and $x' \in \Delta^g(x) \cap B(0,\|x\|_1)$. To see this, one can consider a nearest-neighbour path staying inside $B(0,\|x\|_1)$ that links $0$ to $x$, and observe that this path must pass through a point of $\Delta^g(0)$ and a point of $\Delta^g(x)$.

Note also that 
\begin{equation}
\label{subsetcinfty}
\bigcup_{i \in \mcl{C}_\infty} \CC_i \subset \mathbf{C}_\infty'.
\end{equation}
Indeed, it is clear that the union on the l.h.s.\ is an infinite connected component of healthy sites, by virtue of part (2) of Proposition~\ref{observ}, and uniqueness of the infinite healthy cluster thus ensures that \eqref{subsetcinfty} holds. Moreover, considering \eqref{defDelta}, any point in $\Delta^g(x)$ is in the set on the l.h.s.\ of \eqref{subsetcinfty}. In particular, $x'$ is in $\mathbf{C}_\infty'$, and by the same argument, we also have $y \in \mathbf{C}_\infty'$.

Outside of an event of exponentially small probability, $y$ and $x'$ can thus be connected by a healthy path $\gamma$ of length smaller than $\varrho' \|x\|_1$. By forcing the walk to follow this path, we obtain the inequality
$$
\hata(0,x) \le \td{a}(y,x) \le \sum_{z \in \gamma} (V(z) + \log(2d)) \le |\gamma|(M+\log(2d)),
$$
where we used the fact that $\gamma$ is a healthy path in the last step. This proves the proposition, since $|\gamma| \le \varrho' \|x\|_1$.
\end{proof}
\begin{proof}[Proof of Proposition~\ref{p:shapehata}]
It is more convenient to work with 
$$
\ov{a}(x,y) = \hata(x,y) + v(y),
$$
since we learn from Proposition~\ref{p:triangle} that it satisfies, for any $x,y,z \in \Z^d$,
$$
\ov{a}(x,z) \le \ov{a}(x,y) + \ov{a}(y,z).
$$
We let 
$$
\ov{A}_t = \{x \in \Z^d : \ov{a}(0,x) \le t\},
$$
and $\ov{A}_t^\circ = \ov{A}_t + [0,1)^d$. In order to reproduce the arguments of Propositions~\ref{coverball} and~\ref{shape2borne} for $\ov{a}$ instead of $a_m$, the only necessary information we need is that there exists $\mcl{K}$ such that
$$
\sum_{x \in \Z^d} \P[\ov{a}(0,x) \ge \mcl{K} \|x\|_1] < + \infty.
$$
This is ensured by Propositions~\ref{integrability} and \ref{controltailH}.
We thus obtain that, for any $\eps > 0$, one has
$$
\P[(1-\eps)K \subset t^{-1} \ov{A}_t^\circ \subset (1+\eps)K \text{ for all } t \text{ large enough} ]=  1.
$$
One can then proceed as in parts (3) of Propositions~\ref{coverball} and~\ref{shape2borne} to finish the proof of Proposition~\ref{p:shapehata}.
\end{proof}
\begin{proof}[Proof of part (2) of Theorem~\ref{t:shape}]
First, as $\hata(0,x) \le a(0,x)$ (see Proposition~\ref{compahata}), one has $A_t \subset \hatA_t$, so it follows from Proposition~\ref{p:shapehata} that for any $\eps > 0$, with probability one,
\begin{equation}
\label{infas}
t^{-1} A_t^\circ \subset (1+\eps) K \quad \text{ for all } t \text{ large enough}.	
\end{equation}
For any $\eps > 0$, we decompose 
\begin{multline}
\label{decomplebesgue}
| (t^{-1} A_t^\circ) \ \triangle \ K | \le \\
 | (t^{-1} A_t^\circ) \setminus (1+\eps)K| +  |(1+\eps)K \setminus (1-\eps)K| + |(1 - \eps) K \setminus (t^{-1} A_t^\circ)|.
\end{multline}
The first term in the r.h.s.\ of (\ref{decomplebesgue}) tends to $0$ a.s.\ as $t$ tends to infinity due to (\ref{infas}), while the second term can be made as small as desired by taking $\eps$ small enough. It thus suffices to show that, for any $\eps > 0$, one has
\begin{equation}
\label{sufficelebesgue}
|(1 - \eps) K \setminus (t^{-1} A_t^\circ)| \xrightarrow[t \to +\infty]{\text{a.s.}} 0.	
\end{equation}
We have shown in Proposition~\ref{compahata} that $a(0,y) \le \hata(0,y) + u(0)+u(y)$, so
$$
(1 - \eps) K \setminus (t^{-1} A_t^\circ) \subset \{x \in \R^d : \alpha(x) \le 1-\eps \text{ and } \hata(0,tx) + u(0) + u(tx) > t \},
$$
where we understand $\hata(0,tx)$ and $u(tx)$ to be respectively $\hata(0,\lfloor tx \rfloor)$ and $u(\lfloor tx \rfloor)$ if $tx \notin \Z^d$.
Since $u(0) < +\infty$ a.s.,\ it follows from Proposition~\ref{p:shapehata}  that almost surely, for all $t$ large enough, one has
$$
\ll\{x \in \R^d : \alpha(x) \le 1-\eps, u(0) + \hata(0,tx) > \ll(1-\frac{\eps}{2}\rr) t \rr\} = \emptyset.
$$
In order to prove (\ref{sufficelebesgue}), it is thus sufficient to show that
\begin{equation}
\label{presquebon}
|\{x \in \R^d : \alpha(x) \le 1-\eps \text{ and } u(tx) \ge \eps t/2 \}|\xrightarrow[t \to +\infty]{\text{a.s.}} 0. 
\end{equation}
If $C$ is large enough so that $K \subset [-C,C]^d$, then we can bound the l.h.s.\ of (\ref{presquebon}) by
$$
t^{-d} \sum_{z \in [-Ct,Ct]^d \cap \Z^d} \1_{\{u(z) \ge \eps t/2\}}.
$$
The multi-dimensional ergodic theorem (see \cite[Chapter~6]{krengel}) ensures that, for any constant $\mfk{m} > 0$, one has
$$
t^{-d} \sum_{z \in [-Ct,Ct]^d \cap \Z^d} \1_{\{u(z) \ge \mfk{m}\}} \xrightarrow[t \to +\infty]{\text{a.s.}} (2C)^d \ \P[u(0) \ge \mfk{m}].
$$
As a consequence,
$$
\limsup_{t \to +\infty} \ t^{-d} \sum_{z \in [-Ct,Ct]^d \cap \Z^d} \1_{\{u(z) \ge \eps t/2\}}
$$
should be smaller than $(2C)^d \ \P[u(0) \ge \mfk{m}]$ for any $\mfk{m}$, so it is in fact $0$ a.s.,\ and this finishes the proof.
\end{proof}
\begin{proof}[Proof of part (3) of Theorem~\ref{t:shape}]
It is clear that the observation (\ref{infas}) obtained above still holds under assumption (H). Let $(\e_t)_{t \ge 0}$ be such that $\lim_{t \to \infty} \e_t = + \infty$ and $\lim_{t \to \infty} \e_t/t = 0$. Reasoning as in the proof of part (2), one can see that it suffices to show that on the event $0 \in \mathbf{C}_\infty$, one has
\begin{equation}
\label{sufficelebesgue2}
|(1 - \eps) K \setminus (t^{-1} A_t^{\e_t})| \xrightarrow[t \to +\infty]{\text{a.s.}} 0.	
\end{equation}
On the event $0 \in \mathbf{C}_\infty$, it is still true that $u(0) < +\infty$ almost surely, so it is in fact sufficient to prove that
\begin{equation}
\label{presquebon2}
|\{x \in \R^d : \alpha(x) \le 1-\eps \text{ and } \forall y \text{ s.t. } \|y-tx\|_2 \le \e_t, \ u(y) \ge \eps t/2 \}|\xrightarrow[t \to +\infty]{\text{a.s.}} 0. 
\end{equation}
Taking $C$ large enough so that $K \subset [-C,C]^d$, we can bound the l.h.s.\ of (\ref{presquebon2}) by
\begin{equation}
\label{presquepresque}
t^{-d} \ \sum_{z \in [-Ct,Ct]^d \cap \Z^d} \1_{\{ \forall y \text{ s.t. } \|y-z\|_2 \le \e_t, \ u(y) \ge \eps t/2\}} \xrightarrow[t \to +\infty]{\text{a.s.}} 0. 
\end{equation}
For any two constants $\mfk{m}_1, \mfk{m}_2 < +\infty$, the multi-dimensional ergodic theorem ensures that
\begin{multline*}
t^{-d} \ \sum_{z \in [-Ct,Ct]^d \cap \Z^d} \1_{\{ \forall y \text{ s.t. } \|y-z\|_2 \le \mfk{m}_1, \ u(y) \ge \mfk{m}_2\}} \\
\xrightarrow[t \to +\infty]{\text{a.s.}} (2C)^d \ \P[\forall y \text{ s.t. } \|y\|_2 \le \mfk{m}_1, \ u(y) \ge \mfk{m}_2].
\end{multline*}
The limsup as $t \to +\infty$ of the r.h.s.\ of (\ref{presquepresque}) must thus be smaller than 
\begin{equation}
\label{tsol}
(2C)^d \ \P[\forall y \text{ s.t. } \|y\|_2 \le \mfk{m}_1, \ u(y) \ge \mfk{m}_2]	
\end{equation}
for any $\mfk{m}_1, \mfk{m}_2$. The proposition will thus be proved if we can show that the probability in (\ref{tsol}) can be made arbitrarily small by a suitable choice of $\mfk{m}_1$ and $\mfk{m}_2$. Noting that $u(y)$ is not a.s.\ equal to $+\infty$, we can let $\mfk{m}_2$ be such that $\P[u(y) < \mfk{m}_2] > 0$. By the ergodic theorem, one has
\begin{equation}
\label{cvas}
(2n+1)^{-d} \ \sum_{y \in B(0,n)} \1_{\{u(y) < \mfk{m}_2\}} \xrightarrow[n \to + \infty]{\text{a.s.}} \P[u(y) < \mfk{m}_2] > 0.
\end{equation}
The conclusion now follows observing that
\begin{equation*}
\P[\exists y : \|y\|_\infty \le n \text{ and } u(y) < \mfk{m}_2] = \P\ll[ \sum_{y \in B(0,n)} \1_{\{u(y)  < \mfk{m}_2\}} \ge 1 \rr]  \xrightarrow[n \to + \infty]{}  1.
\end{equation*}
\end{proof}
%
%
%
%
%
%
%
%
\section{Point-to-hyperplane exponent}
\label{s:pth}
\setcounter{equation}{0}
\begin{proof}[Proof of Theorem~\ref{t:existplane}]
Let $x \in \R^d \setminus \{0\}$. We start by showing that, on the event  $0 \in \mathbf{C}_\infty$,
\begin{equation}
\label{limsupa*}
\limsup_{t \to + \infty} t^{-1} \ a^*(x,t) \le \frac{1}{\alpha^*(x)}.	
\end{equation}
Note first that
\begin{equation}
\label{dualdual}
\frac{1}{\alpha^*(x)} = \inf \{\alpha(y) \ | \ y \in \R^d, x \cdot y = 1 \}.	
\end{equation}
Let $y_0 \in \R^d$ be such that $x \cdot y_0 > 1$. We know from part (3) of Theorem~\ref{t:shape} that, on the event $0 \in \mathbf{C}_\infty$,
\begin{equation}
\label{conerge}
| \big(t^{-1} A_{\alpha(y_0)t}^{\sqrt{t}}\big) \ \triangle \ \big(\alpha(y_0) K\big) | \xrightarrow[t \to +\infty]{\text{a.s.}} 0.
\end{equation}
Since $K$ is the unit ball of some norm, $\alpha(y_0) \neq 0$, and $y_0 \in \alpha(y_0) K$, one can see that the set
$$
\{y \in \R^d : \|y-y_0\|_2 \le \eps \} \cap \big(\alpha(y_0) K\big)
$$
has non-zero Lebesgue measure. In particular, it follows from \eqref{conerge} that on the event $0 \in \mathbf{C}_\infty$, for all $t$ large enough and any $\eps > 0$, one has
$$
\big(t^{-1} A_{\alpha(y_0)t}^{\sqrt{t}}\big) \cap \{y : \|y-y_0\|_2 \le \eps \} \neq \emptyset.
$$
As a consequence, for all $t$ large enough, there exists $y_t, y_t'$ such that
$$
y_t \in A_{\alpha(y_0)t}, \quad \|y_t - y_t'\|_2 \le \sqrt{t}, \quad \|y_t' - t y_0\|_2 \le \eps t.
$$
For $t$ large enough, one must have $\|y_t - t y_0\|_2 \le 2 \eps t$. For such $t$, one has
$$
x \cdot y_t  = t  x \cdot y_0 + x \cdot (y_t - t y_0) \ge t (x \cdot y_0 - 2 \eps \|x\|_2).
$$
Choosing $\eps$ such that $x \cdot y_0 - 2 \eps \|x\|_2 \ge 1$ ensures that for $t$ large enough, $y_t$ lies in the half-space
$$
\{z \in \R^d :  x \cdot z \ge t\},
$$
and it follows that $a^*(x,t) \le a(0,y_t)$. Besides, $y_t$ belongs to $A_{\alpha(y_0) t}$, so in fact $a^*(x,t) \le \alpha(y_0) t$. We have proved that, on the event $0 \in \mathbf{C}_\infty$, for any $y_0$ such that $x \cdot y_0 > 1$, one has
$$
\limsup_{t \to +\infty} t^{-1} \ a^*(x,t) \le \alpha(y_0).
$$
In view of (\ref{dualdual}), this implies (\ref{limsupa*}).

Let us now prove that, with probability one,
\begin{equation}
\label{liminfa*}
\liminf_{t \to + \infty} t^{-1} \ a^*(x,t) \ge \frac{1}{\alpha^*(x)}.	
\end{equation}
In order to do so, we adapt the argument of \cite[Corollary~5.2.11]{sznitman} to our discrete setting. Let $x_2,\ldots, x_d$ be such that $x,x_2,\ldots, x_d$ is an orthogonal basis of $\R^d$. For $L > 0$, let $P_L$ be the parallelepiped defined by
$$
P_L = \big\{y \in \R^d : -L \le x \cdot y \le 1 \text{ and } \forall i \in \{2 , \ldots, d\}, |x_i \cdot y| \le L \big\}.
$$
Since $\alpha$ is a norm and using (\ref{dualdual}), one can check that for $L$ large enough, one has
\begin{equation}
\label{infPL}
\frac{1}{\alpha^*(x)} = \inf_{\partial P_L} \alpha,
\end{equation}
where $\partial P_L$ denotes the boundary of $P_L$, in the continuous sense. We fix $L$ satisfying this property. Clearly, the random walk starting from $0$ must reach a point of $\underline{\partial} (t P_L)$ (the inner boundary of $(t P_L) \cap \Z^d$) before reaching the half space
$$
\{z \in \R^d :  x \cdot z \ge t\},
$$
so in particular, if we let $T_t$ be the entrance time in $\underline{\partial} (t P_L)$, we have
\begin{eqnarray*}
e^{-a^*(x,t)} & \le & \EE_0\left[ \exp\left( -\sum_{n = 0}^{T_t - 1}  V(S_n) \rr) \ \1_{\{T_t < +\infty\}} \right] \\
& \le & \ll| \underline{\partial} (t P_L) \rr| \exp\ll(-\min_{z \in \underline{\partial} (t P_L)} a(0,z)\rr).
\end{eqnarray*}
As $\ll| \underline{\partial} (t P_L) \rr|$ grows polynomially with $t$, inequality (\ref{liminfa*}) will be proved if we can show that
\begin{equation}
\label{liminfsurlebord}
\liminf_{t \to + \infty} \ t^{-1}  \min_{z \in \underline{\partial} (t P_L)} a(0,z) \ge \inf_{\partial P_L} \alpha. 	
\end{equation}
To see that this is true, consider sequences $t_n \to + \infty$ and $z_n \in \underline{\partial} (t_n P_L)$. It suffices to show that for any such sequence,
$$
\liminf_{n \to + \infty} \ t_n^{-1}  a(0,z_n) \ge \inf_{\partial P_L} \alpha.
$$
Note that $z_n/t_n$ is bounded, so possibly extracting a subsequence, we may as well assume that $z_n/t_n$ converges to some $z^* \in \partial P_L$. From part (3) of Theorem~\ref{t:shape} and Remark~\ref{rem1}, we know that
$$
\liminf_{n \to + \infty} \frac{a(0,z_n)}{\alpha(z_n)} \ge 1.
$$
But
$$
\frac{a(0,z_n)}{\alpha(z_n)} = \frac{a(0,z_n)}{t_n} \frac{1}{\alpha(z_n/t_n)},
$$
so we get that
$$
\liminf_{n \to + \infty} t_n^{-1} {a(0,z_n)} \ge \alpha(z^*),
$$
and, in view of (\ref{infPL}), this finishes the proof.
\end{proof}

%
%
%
%
%
%
%
%
\section{Large deviations: the lower bound}
\label{s:ldplow}
\setcounter{equation}{0}
Without loss of generality, we assume from now on that $\mathbf{0}$ \textbf{is in the support of the distribution of} $\mathbf{V(0)}$.

In order to derive a large deviation principle for the random walk under the weighted measure $\PP_{n,V}$, we need to consider properties of the motion in the potential $V_\lambda = \lambda + V$. Under this potential, a healthy site $x$ defined with reference to the potential $V$ is such that $V_\lambda(x) \le \lambda + M$. Since the specific value of $M$ plays no significant role, we keep the notion of healthy site attached to the potential $V$, and similarly, we keep the notion of a good macroscopic site, the constructions of $\mclC_\infty$, $\mclC_i$, $\Delta^g(x)$, $\tau_g(x)$, independent of~$\lambda$. On the other hand, extending the definitions in (\ref{defa}), (\ref{defalpha}), (\ref{defAt}), (\ref{defK}), (\ref{deftda}), (\ref{defhata}), (\ref{defv}), we let $a_\lambda$, $\alpha_\lambda$, $A_{\lambda,t}$, $K_\lambda$, $\td{a}_\lambda$, $\hata_\lambda$, $v_\lambda$, be, respectively, the quantities $a$, $\alpha$, $A_{t}$, $K$, $\td{a}$, $\hata$, $v$, obtained when the potential $V$ is replaced by $V_\lambda$. For instance, one has
\begin{equation}
\label{deftdalambda}
\td{a}_\lambda(x,y) = - \log \EE_{x} \left[ \exp\left( -\sum_{n = 0}^{\tau_g(y) - 1} (\lambda + V)(S_n) \right), \tau_g(y) < + \infty \right],	
\end{equation}
where we stress that $\tau_g(y)$ does not depend on $\lambda$.

The following proposition and its proof are similar to \cite[Proposition~17]{zer}.
\begin{prop}
\label{partitionfunction}
On the event $0 \in \mathbf{C}_\infty$, the partition function defined in (\ref{defZVn}) satisfies
$$
\frac{1}{n} \log Z_{n,V} \xrightarrow[n \to + \infty]{\text{a.s.}} 0.
$$
\end{prop}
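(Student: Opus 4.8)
The plan is to treat the two bounds separately. Since $V \ge 0$ we have $\exp\big(-\sum_{k=0}^{n-1}V(S_k)\big) \le 1$, hence $Z_{n,V} \le 1$ and $\limsup_n \frac1n \log Z_{n,V} \le 0$ with no further hypothesis. The content is the matching lower bound $\liminf_n \frac1n\log Z_{n,V} \ge 0$, and this is exactly where the assumption that $0$ lies in the support of $V(0)$ is used: it guarantees $p_\eta := \P[V(0) \le \eta] > 0$ for every $\eta > 0$ (had $\inf\mathrm{supp}(V(0))$ been positive, the limit would be strictly negative). Following \cite[Proposition~17]{zer}, the idea is to force the walk to reach a large region of very small potential and then stay confined there for the remaining time.

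The first step I would carry out is a geometric fact: almost surely on $\{0 \in \mathbf{C}_\infty\}$, for every $\eta > 0$ and every large $R$ there is a random $z \in \Z^d$ such that $B(z,R) \subseteq \{V \le \eta\}$, the box $B(z,R)$ sits in the infinite healthy cluster $\mathbf{C}'_\infty$, and $0$ is joined to $B(z,R)$ by a finite nearest-neighbour livable path. To produce such a box, observe that (assuming $\eta \le M$, so that $B(0,R)\subseteq\{V\le\eta\}$ makes $B(0,R)$ healthy) the event $\{B(0,R)\subseteq\{V\le\eta\}\}$ depends only on the sites of $B(0,R)$, while, conditionally on $B(0,R)$ healthy, the event that the healthy cluster of $B(0,R)$ is infinite depends only on the sites outside $B(0,R)$; since healthy percolation is supercritical, the latter conditional probability tends to $1$ as $R\to\infty$, so for $R$ large the event $\{B(0,R)\subseteq\{V\le\eta\},\ B(0,R)\subseteq\mathbf{C}'_\infty\}$ has positive probability, hence by the multidimensional ergodic theorem occurs for a positive density of translates, a.s. For the connection to $0$: on $\{0\in\mathbf{C}_\infty\}$ an infinite livable path issued from $0$ must leave the finite set $\Delta'(0)$ and therefore, by part (4) of Proposition~\ref{observ} together with \eqref{subsetcinfty}, meets $\Delta^g(0)\subseteq\mathbf{C}'_\infty$; concatenating with a path inside the connected set $\mathbf{C}'_\infty$ that reaches $B(z,R)$ gives a finite livable path $\gamma = (\gamma_0,\dots,\gamma_\ell)$ from $0$ to some $w\in B(z,R)$, with $c_\gamma := \sum_j V(\gamma_j) < +\infty$. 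Here $z,w,\gamma,\ell,c_\gamma$ depend on $\eta,R$ but not on $n$.

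Next, for $n>\ell$ I would force the walk to follow $\gamma$ during its first $\ell$ steps and then to remain in $B(z,R)$, and apply the Markov property at time $\ell$ together with $V\le\eta$ on $B(z,R)$:
\begin{align*}
Z_{n,V} &\ge (2d)^{-\ell} e^{-c_\gamma}\, \EE_w\Big[\exp\Big(-\sum_{k=0}^{n-1-\ell}V(S_k)\Big);\, S_k \in B(z,R)\ \forall\, 0\le k\le n-1-\ell\Big] \\
&\ge (2d)^{-\ell} e^{-c_\gamma} e^{-(n-\ell)\eta}\, \PP_w\big[S_k \in B(z,R)\ \forall\, 0\le k\le n-1-\ell\big].
\end{align*}
A standard Dirichlet-eigenvalue estimate for simple random walk killed upon exiting a box of side $2R+1$ — whose principal eigenvalue $\lambda_R$ satisfies $\lambda_R \ge 1 - C_d R^{-2}$ and whose principal eigenfunction is strictly positive on the connected set $B(z,R)$ — yields a constant $c_R>0$ with $\PP_w[S_k\in B(z,R)\ \forall\, 0\le k\le m] \ge c_R\,\lambda_R^{\,m} \ge c_R\, e^{-2C_d m/R^2}$ for $R$ large. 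Inserting this and letting $n\to\infty$ with $\eta,R$ fixed gives $\liminf_n \frac1n\log Z_{n,V} \ge -\eta - 2C_dR^{-2}$ a.s.\ on $\{0\in\mathbf{C}_\infty\}$; intersecting these almost sure events over $\eta\in\Q_{>0}$ and over $R\in\N$ large, then sending $R\to\infty$ and $\eta\to0$, gives $\liminf_n \frac1n\log Z_{n,V} \ge 0$, which combined with the upper bound proves the proposition.

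The main obstacle is the geometric step: producing, almost surely on $\{0\in\mathbf{C}_\infty\}$ and for all $\eta,R$, an $\eta$-healthy box that genuinely belongs to the infinite healthy cluster and is thus reachable from $0$ at finite cost; this forces one to exploit the conditional independence of the two defining events together with ergodicity, just as in \cite{zer}. The confinement inequality and its spectral-gap input are routine.
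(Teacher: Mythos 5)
Your proposal is correct and follows essentially the same strategy as the paper: force the walk along a finite livable path to an $(\eta,R)$-clearing and then confine it there, paying $e^{-\eta n}$ plus the confinement probability. The only cosmetic differences are that you bound the confinement probability by a Dirichlet-eigenvalue estimate where the paper uses the elementary device of returning to the centre every $R$ steps (so that the cost is $-\frac{1}{R}\log \PP_0[S_R=0]$, which vanishes as $R\to\infty$ since $\PP_0[S_R=0]$ decays polynomially), and that you spell out the existence of a reachable clearing, which the paper asserts and justifies quantitatively later in Proposition~\ref{clearings}.
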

A notion that will be useful to prove the proposition, and also later on, is that of ``clearings''. For $\eps > 0$ and $R \in \N$, we say that a site $x \in \Z^d$ is an $(\eps,R)$-\emph{clearing} if 
\begin{equation}
\label{defclearing}
\forall y \in B(x,R),  \ V(y) \le \eps.	
\end{equation}
\begin{proof}[Proof of Proposition~\ref{partitionfunction}]
Let $\eps > 0$ and $R \in \N$. On the event $0 \in \mathbf{C}_\infty$, there exists a path of livable sites going from $0$ to an $(\eps,R)$-clearing (we will in fact give a control on the distance from the origin to a clearing in Proposition~\ref{clearings}). We write $x$ and $\gamma$ for, respectively, the clearing and the path connecting $0$ to $x$. We obtain a lower bound on $Z_{n,V}$ by forcing the walk to travel along the path $\gamma$ and then stay within distance $R$ from $x$:
$$
Z_{n,V} \ge \exp\ll( -\sum_{z \in \gamma \setminus \{x\}} (V(z) + \log(2d)) \rr) e^{-\eps n} \PP_x\ll[ \forall k \le n, \ S_k \in B(x,R) \rr].
$$
The first exponential term does not depend on $n$, so
\begin{equation}
\label{proofofpf}	
\limsup_{n \to + \infty} \ - \frac1n \log Z_{n,V} \le \eps + \limsup_{n \to +\infty} \ - \frac1n \log \PP_0\ll[ \forall k \le n, \ S_k \in B(0,R) \rr].
\end{equation}
The limsup in the r.h.s.\ of (\ref{proofofpf}) can be bounded by noting that
$$
\forall k \le n, \ S_k \in B(0,R)
$$
is implied by the event that $S_{kR} = 0$ for any $k < n/R$, and thus
\begin{equation}
\label{restela}
\limsup_{n \to +\infty} \ - \frac1n \log \PP_0\ll[ \forall k \le n, \ S_k \in B(0,R) \rr] \le -\frac{1}{R} \log \PP_0[S_R = 0].	
\end{equation}
The probability $\PP_0[S_R = 0]$ decays only polynomially with $R$ as $R$ tends to infinity, so the limsup in the l.h.s.\ of (\ref{proofofpf}) can be made arbitrarily small by taking $\eps$ small enough and $R$ large enough, thus completing the proof.
\end{proof}
\begin{prop}
\label{p:jointconti}
The function $(\lambda,x) \mapsto \alpha_\lambda(x)$ is concave increasing in $\lambda$ for fixed~$x$, and jointly continuous.
\end{prop}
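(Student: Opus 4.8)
The plan is to establish the three assertions — monotonicity in $\lambda$, concavity in $\lambda$, and joint continuity — by exploiting the representation of $\alpha_\lambda(x)$ as an almost sure (and $L^1$) limit of $\frac1n \bigl( \hata_\lambda(0,nx) + v_\lambda(nx) \bigr)$, combined with the variational/subadditive formula $\alpha_\lambda(x) = \inf_n \frac1n \E\bigl[ \hata_\lambda(0,nx) + v_\lambda(nx) \bigr]$ coming from Kingman's theorem as in the proof of part (3) of Theorem~\ref{t:existpoint}. The key observation is that for a \emph{fixed} realization of the potential and a fixed nearest-neighbour trajectory of the walk from $x$ up to the (potential-independent) stopping time $\tau_g(y)$, the exponential weight $\exp\bigl( -\sum_{n=0}^{\tau_g(y)-1}(\lambda+V)(S_n)\bigr)$ equals $\e^{-\lambda \tau_g(y)}$ times the $\lambda=0$ weight. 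Since $\tau_g(y) \ge 0$, increasing $\lambda$ can only decrease $\EE_x[\cdots]$ inside the definition of $\td a_\lambda$, hence increase $\td a_\lambda(x,y)$, hence increase $\hata_\lambda(x,y) = \min_{x'\in\Delta^g(x)} \td a_\lambda(x',y)$; note $v_\lambda$ is also nondecreasing in $\lambda$ since $v_\lambda(x) = \sum_{z\in\Delta^g(x)}((\lambda+V)(x)+\log 2d)$ (and $\Delta^g$ does not depend on $\lambda$). Passing to the limit gives that $\lambda \mapsto \alpha_\lambda(x)$ is nondecreasing; strict increase is not asserted, so ``increasing'' here should be read as nondecreasing (or one uses $v_\lambda(x) \ge v_0(x) + \lambda$ to get genuine increase). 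For concavity, I would use that $\lambda \mapsto \log \EE_x\bigl[\exp(-\sum_{n=0}^{\tau_g(y)-1}(\lambda+V)(S_n))\1_{\{\tau_g(y)<\infty\}}\bigr]$ is a log-moment generating function in $\lambda$ of the (defective) random variable $-\tau_g(y)$ under the tilted measure, hence convex in $\lambda$; therefore $\td a_\lambda(x,y)$ is concave in $\lambda$, and a minimum of concave functions ($\hata_\lambda$) is concave, and adding the affine-in-$\lambda$ term $v_\lambda$ preserves concavity. Then $\alpha_\lambda(x) = \inf_n \frac1n \E[\cdots]$ is an infimum of concave functions of $\lambda$, hence concave.

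For joint continuity, the cleanest route is to combine (i) continuity in $x$, which is uniform in $\lambda$, with (ii) continuity in $\lambda$ for fixed $x$. For (i): the estimate $\alpha_\lambda(x) \le \hata_\lambda(0,x)+v_\lambda(x)$ together with $\hata_\lambda(0,x) \le \sum_{z\in\gamma} v_\lambda(z)$ along a path $\gamma$ (Proposition~\ref{integrability}, applied to $V_\lambda$) and the moment bound $\P[v_\lambda(x)\ge t] \le e^{-ct^{1-1/d}}$ — whose constant depends on $\lambda$ only through $M$ being replaced by $M+\lambda$ — yields, for $\lambda$ in a compact set $[0,\Lambda]$, a bound $\E[v_\lambda(0)] \le C(\Lambda)$ and hence $0 \le \alpha_\lambda(y) \le C(\Lambda)\|y\|_1$ and, by subadditivity, $|\alpha_\lambda(x)-\alpha_\lambda(x')| \le C(\Lambda)\|x-x'\|_1$ for all $x,x'\in\R^d$ — a Lipschitz modulus uniform over $\lambda \in [0,\Lambda]$. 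For (ii): for fixed $x \in \Q^d$, concavity of $\lambda\mapsto\alpha_\lambda(x)$ on $[0,\infty)$ automatically gives continuity on the open interval $(0,\infty)$; right-continuity at $0$ follows from monotone convergence, since as $\lambda \downarrow 0$ the integrands $\frac1n(\hata_\lambda(0,nx)+v_\lambda(nx))$ decrease pointwise to $\frac1n(\hata_0(0,nx)+v_0(nx))$ (dominated by the $\lambda$-value at any fixed $\lambda_0>0$, which is integrable by Proposition~\ref{integrability}), so $\alpha_\lambda(x) = \inf_n\frac1n\E[\cdots] \downarrow \alpha_0(x)$ by exchanging the two infima. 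Finally, the uniform-in-$\lambda$ Lipschitz bound in $x$ upgrades pointwise-in-$\lambda$, continuous-in-$x$ to joint continuity by the standard $\eps/2$ argument: $|\alpha_\lambda(x)-\alpha_{\lambda'}(x')| \le |\alpha_\lambda(x)-\alpha_\lambda(x')| + |\alpha_\lambda(x')-\alpha_{\lambda'}(x')| \le C(\Lambda)\|x-x'\|_1 + |\alpha_\lambda(x')-\alpha_{\lambda'}(x')|$, and one first picks a rational $x'$ close to $x$, then uses continuity in $\lambda$ at $x'$.

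I expect the main obstacle to be the bookkeeping for the uniform-in-$\lambda$ bounds: one must check that the constants $c, C$ in Proposition~\ref{integrability} and Proposition~\ref{tailradius}, when transported to the potential $V_\lambda$, depend on $\lambda$ only through the replacement $M \rightsquigarrow M+\lambda$ (the renormalization structure $\mclC_\infty, \Delta^g, \tau_g$ is genuinely $\lambda$-independent, as emphasized in the text just before the proposition, so this is really just tracking the extra additive $\lambda$ in the deterministic bound $v_\lambda(x) \le (M+\lambda+\log 2d)|\Delta^g(x)|$). Everything else is soft: log-convexity of Laplace transforms, stability of concavity/monotonicity under pointwise infima, and the exchange-of-infima argument for right-continuity at $\lambda = 0$. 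One minor care point: the statement says ``increasing'', and unless one inserts the $v_\lambda(x) \ge v_0(x) + \lambda$ observation to get strict monotonicity in $\lambda$, the argument only delivers nondecreasing — I would include that one-line strengthening.
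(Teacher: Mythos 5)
Your proposal is correct, and most of it coincides with the paper's argument: concavity via log-convexity of the Laplace transform in $\lambda$ (the paper invokes H\"older's inequality applied to (\ref{deftdalambda}), which is the same thing), monotonicity read in the wide sense, and continuity in $\lambda$ for fixed $x$ obtained from the representation $\alpha_\lambda(x)=\inf_n \frac1n\E[\ov a_\lambda(0,nx)]$ together with monotone convergence of each $\E[\ov a_\lambda(0,nx)]$ --- your exchange-of-infima argument at $\lambda=0$ is exactly the upper semi-continuity of an infimum of continuous functions, which combined with lower semi-continuity from concavity gives continuity, as in the paper. The one step where you genuinely diverge is the passage to joint continuity: the paper observes that the functions $x\mapsto\alpha_\lambda(x)$ are continuous and form a family monotone in $\lambda$ with a continuous pointwise limit, and invokes Dini's theorem to get uniform convergence on compacts, hence joint continuity; you instead establish a Lipschitz modulus in $x$ that is uniform over $\lambda\in[0,\Lambda]$ (via $\E[v_\lambda(0)]\le(M+\Lambda+\log 2d)\,\E[|\Delta^g(0)|]<\infty$ and the subadditivity--symmetry argument of (\ref{continuityalpha})) and then do the standard $\eps/2$ splitting. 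Both routes work; Dini's theorem is softer and avoids any quantitative bookkeeping, whereas your uniform Lipschitz bound is slightly more work but yields the stronger (and occasionally useful) statement that the norms $\alpha_\lambda$ are equi-Lipschitz on compact $\lambda$-intervals. Your side remark on strict monotonicity is not needed --- the paper explicitly means ``increasing in the wide sense'' --- though your observation that strictness for $x\neq0$ can be extracted from $\tau_g(nx)\gtrsim n\|x\|_1$ is accurate.
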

\begin{proof}
The concavity of $\lambda \mapsto \alpha_\lambda(x)$ follows from Hölder's inequality applied to formula (\ref{deftdalambda}), while the fact that it is increasing (in the wide sense) is obvious. Let us see that it is also continuous in the $\lambda$ variable. As a concave function, it is lower semi-continuous. On the other hand, recall that we built $\alpha_\lambda(x)$ by applying the subadditive ergodic theorem to 
\begin{equation}
\label{defova}
\ov{a}_\lambda(x,y) = \hata_\lambda(x,y) + v_\lambda(y),	
\end{equation}
see the proof of part (3) of Theorem~\ref{t:existpoint} in the end of section~\ref{s:ptpnomoment}. It thus follows that
\begin{equation}
\label{alphalambdainf}
\alpha_\lambda(x) = \inf_{n \in \N} \frac{1}{n} \E\ll[ \ov{a}_\lambda(0,nx) \rr].	
\end{equation}
The function $\lambda \mapsto \E\ll[\ov{a}_\lambda(0,nx) \rr]$ is continuous, since $\lambda \mapsto \ov{a}_\lambda(0,nx)$ is monotone and $\ov{a}_\lambda(0,nx)$ is integrable for any $\lambda$. As an infimum of continuous functions, the function $\lambda \mapsto \alpha_\lambda(x)$ is upper semi-continuous, and thus in fact continuous.

Finally, observe that $x \mapsto \alpha_\lambda(x)$ are continuous functions, and that they form a monotone family of functions. So Dini's theorem can be used to obtain the joint continuity in the variables $(\lambda,x)$ over any compact set.
\end{proof}
\begin{prop}
\label{p:unifliminf}
With probability one, the following holds for any $\lambda \ge 0$:
$$
\liminf_{\|x\|_1 \to + \infty} \frac{a_\lambda(0,x)}{\alpha_\lambda(x)} \ge 1.
$$
\end{prop}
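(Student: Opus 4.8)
The plan is to deduce the statement for all $\lambda \ge 0$ simultaneously from the already-established result for a fixed $\lambda$, using monotonicity of $a_\lambda$ in $\lambda$ and the joint continuity of $\alpha_\lambda(x)$. First I would record that for each fixed $\lambda \ge 0$ the potential $V_\lambda = \lambda + V$ still satisfies assumption (H), since $\P[V_\lambda(0) < +\infty] = \P[V(0) < +\infty] > p_c$. Hence part (3) of Theorem~\ref{t:shape} applies to $V_\lambda$, and by Remark~\ref{rem1} the statement \eqref{eq:pth1} for $V_\lambda$ is precisely $\liminf_{\|x\|_1 \to +\infty} a_\lambda(0,x)/\alpha_\lambda(x) \ge 1$, holding with probability one. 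Intersecting over $\lambda \in \Q_{\ge 0}$, I obtain a single event $\Omega_0$ of full probability on which this holds for every rational $\lambda \ge 0$.

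Then I would work on $\Omega_0$, fixing an arbitrary $\lambda \ge 0$ and $\eps \in (0,1)$. Two ingredients are needed. First, $a_\lambda(0,x)$ is nondecreasing in $\lambda$ for each fixed $x$: this is immediate from the fact that $e_\lambda(0,x) = \EE_0[\exp(-\sum_{n=0}^{H_x-1}(\lambda+V)(S_n)) \1_{\{H_x<+\infty\}}]$ is nonincreasing in $\lambda$. Second, for $\lambda'$ close enough to $\lambda$ one has $\alpha_{\lambda'}(x) \ge (1-\eps)\,\alpha_\lambda(x)$ for every $x \neq 0$; by homogeneity of the norms $\alpha_\lambda$ and $\alpha_{\lambda'}$, the ratio $\alpha_{\lambda'}(x)/\alpha_\lambda(x)$ depends only on $x/\|x\|_1$, so it suffices to control it on the compact set $\{\|\omega\|_1 = 1\}$. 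Proposition~\ref{p:jointconti} gives uniform continuity of $(\mu,\omega) \mapsto \alpha_\mu(\omega)$ on $[0,\lambda] \times \{\|\omega\|_1 = 1\}$, while \eqref{lowerboundonalpha} applied to $V_\mu$ bounds $\alpha_\mu(\omega)$ below by a positive constant on that set, uniformly in $\mu$; choosing $\lambda' \in \Q_{\ge 0}$ with $\lambda' \le \lambda$ and $\lambda - \lambda'$ small enough then yields the second ingredient. Combining the two, on $\Omega_0$ and for all $x$ with $\|x\|_1$ large enough,
$$
a_\lambda(0,x) \ \ge\ a_{\lambda'}(0,x) \ \ge\ (1-\eps)\,\alpha_{\lambda'}(x) \ \ge\ (1-\eps)^2\,\alpha_\lambda(x),
$$
so $\liminf_{\|x\|_1 \to +\infty} a_\lambda(0,x)/\alpha_\lambda(x) \ge (1-\eps)^2$, and letting $\eps \downarrow 0$ finishes the proof.

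The only genuinely nontrivial point is the interchange of "for all $\lambda$" with the almost sure statement, and it is handled entirely by the two ingredients above: the monotonicity of $a_\lambda$ in $\lambda$ lets me approximate an arbitrary $\lambda$ from below by rationals, and the joint continuity of $\alpha_\lambda$ (together with the uniform-in-$\lambda$ lower bound \eqref{lowerboundonalpha}) controls the resulting error in the denominator uniformly in the space variable $x$. No new probabilistic estimate is required beyond Theorem~\ref{t:shape}(3) and the facts already collected in the excerpt.
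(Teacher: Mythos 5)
Your proposal is correct and follows essentially the same route as the paper: reduce to rational $\lambda$ via part (3) of Theorem~\ref{t:shape} and Remark~\ref{rem1}, then handle arbitrary $\lambda$ by combining the monotonicity of $a_\lambda(0,x)$ in $\lambda$ with the joint continuity of $(\lambda,x)\mapsto\alpha_\lambda(x)$ from Proposition~\ref{p:jointconti} (together with the uniform lower bound \eqref{lowerboundonalpha}). The paper phrases the same argument in terms of the inclusions $A_{\lambda,t}\subset A_{\lambda-\delta,t}$ and $K_{\lambda-\delta}\subset(1+\eps)K_\lambda$, but the content is identical.
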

\begin{proof}
In view of Remark~\ref{rem1}, it suffices to check that with probability one, one has for any $\lambda \ge 0$ and any $\eps > 0$ that
\begin{equation}
\label{liminfalambda}
t^{-1} A_{\lambda,t} \subset (1+\eps)K_\lambda \text{ for all } t \text{ large enough}.	
\end{equation}
Due to part (3) of Theorem~\ref{t:shape}, we know that with probability one, (\ref{liminfalambda}) holds for any rational $\lambda$ and $\eps$. The monotonicity of property (\ref{liminfalambda}) with respect to $\eps$ enables to disregard the restriction over the values of $\eps$. Consider now an arbitrary $\lambda > 0$. For any $\delta > 0$, the set $A_{\lambda,t}$ is a subset of $A_{\lambda-\delta,t}$. The claim would thus be proved if we can show that, for any $\eps > 0$, any sufficiently small $\delta > 0$ is such that
$$
K_{\lambda-\delta} \subset (1+\eps) K_{\lambda}.
$$
But this follows from the joint continuity (turned into uniform continuity over a compact set) of $(\lambda,x) \mapsto \alpha_\lambda(x)$ proved in Proposition~\ref{p:jointconti}.
\end{proof}
\begin{rem}
There seems to be a minor problem with the analogous result obtained in \cite[Corollary~16]{zer}. A way to fix the claim is to impose $K_n$ to be a subset of $\Z^d$, and check that this weaker version is still sufficient for the proof of \cite[Theorem~19]{zer}.
\end{rem}
\begin{proof}[Proof of (\ref{ldclosed}) in Theorem~\ref{t:ldp}]
The argument is similar to the proof of \cite[Theorem~5.4.2]{sznitman}. First, let us note that the rate function $I$ is infinite outside of a compact set. Indeed, we learn from (\ref{lowerboundonalpha}) that
$$
\alpha_\lambda(x) \ge \ll(\lambda - \log \E\ll[ e^{-V(0)} \rr] \rr) \|x\|_1,
$$
so $I(x) = \sup_{\lambda \ge 0} (\alpha_\lambda(x) - \lambda)$ is infinite as soon as $\|x\|_1 > 1$. In order to prove (\ref{ldclosed}), it is thus sufficient to consider a compact $F \subset \R^d$. Let $T_n$ be the entrance time in $nF$. One has
\begin{eqnarray*}
Z_{n,V} \ \PP_{n,V}[S_n \in nF] & = & e^{\lambda n} \ \EE_0\ll[\exp\ll(-\sum_{k = 0}^{n-1} (\lambda + V)(S_k)  \rr),  S_n \in nF\rr] \\
& \le & e^{\lambda n} \ \EE_0\ll[\exp\ll(-\sum_{k = 0}^{T_n-1} (\lambda + V)(S_k)  \rr),  T_n < + \infty \rr] \\
& \le & e^{\lambda n} \ |nF\cap \Z^d| \exp\ll( -\min_{z \in nF\cap \Z^d} a_\lambda(0,z) \rr).
\end{eqnarray*}
Clearly, $|nF\cap \Z^d|$ grows polynomially with $n$. Moreover, we claim that on a set of full probability measure not depending on $F$ or $\lambda$, one has
$$
\liminf_{n \to +\infty} \ n^{-1} \min_{z \in nF\cap \Z^d} a_\lambda(0,z) \ge \inf_F \alpha_\lambda.	
$$
This can be shown to be true in the same way as was done for identity (\ref{liminfsurlebord}), with the help of Proposition~\ref{p:unifliminf}. Using also Proposition~\ref{partitionfunction}, we thus obtain that almost surely on the event $0 \in \mathbf{C}_\infty$, for any compact $F$ and any $\lambda\ge 0$, the following holds:
\begin{equation}
\label{halfwaylow}
\liminf_{n \to +\infty} \ -\frac1n \log \PP_{n,V}[S_n \in nF] \ge \inf_F \alpha_\lambda - \lambda .
\end{equation}
We could take the supremum over $\lambda$ in the r.h.s.\ of the above inequality, but supremum and infimum would not come ordered in the way we are hoping for. Let $\eps > 0$. The compact set $F$ is covered by the union over $\lambda$ of the open sets
$$
\{x \in \R^d : \alpha_\lambda(x) - \lambda > \inf_F I - \eps\},
$$
so we can find $\lambda_1,\ldots,\lambda_m$ such that the compact sets
$$
F_i = \{x \in F : \alpha_{\lambda_i}(x) - \lambda_i \ge \inf_F I - 2\eps \}
$$
cover $F$. Since $\PP_{n,V}[S_n \in nF] \le \sum_{i=1}^m \PP_{n,V}[S_n \in nF_i]$ and using (\ref{halfwaylow}), we thus obtain that
$$
\liminf_{n \to +\infty} \ -\frac1n \log \PP_{n,V}[S_n \in nF] \ge \inf_{1 \le i \le m} \ll( \inf_{F_i} \alpha_{\lambda_i} - \lambda_i \rr) \ge \inf_F I - 2\eps,
$$
which completes the proof.
\end{proof}

%
%
%
%
%
%
%
%
\section{Large deviations: the upper bound}
\label{s:ldpup}
\setcounter{equation}{0}
In contrast to point-to-point or point-to-hyperplane Lyapunov exponents, the large deviation principle involves events for which the random walk is asked to reach some part of the space \emph{at a specific time}. It is therefore necessary to understand the speed at which the random walk travels when forced to go in a particular direction. This motivates the introduction, for any $x, y \in \Z^d$ and $0 \le s_1 < s_2$, of the event $\mcl{V}_{y,s_1,s_2}$ defined by
$$
s_1 \le \tau_g(y) \le s_2,
$$
and of
$$
\td{a}_\lambda(x,y, s_1,s_2) = - \log \EE_{x} \left[ \exp\left( -\sum_{n = 0}^{\tau_g(y) - 1} (\lambda + V)(S_n) \right), \mcl{V}_{y,s_1,s_2} \right],
$$
$$
\hata_\lambda(x,y,s_1,s_2) = \min_{x' \in \Delta^g(x)} \td{a}_\lambda(x',y,s_1,s_2).
$$ 
The quantity 
$$
\lim_{n \to + \infty} \hata_\lambda(0,nx,ns_1,ns_2)
$$
should be understood as the cost of travelling in the direction of $x$ with an ``inverse speed'' contained in the interval $[s_1,s_2]$ (properly speaking, the inverse speed should be $s_1/\|x\|$ instead of $s_1$, but we forget about this).

We write $\alpha_{\lambda+}'(x)$ (resp.\ $\alpha_{\lambda-}'(x)$) for the right (resp.\ left) derivative of the concave function $\lambda \mapsto \alpha_\lambda(x)$. If these coincide, we write $\alpha'_\lambda(x)$ for their common value.

In a sense made specific in Proposition~\ref{velocity}, under the weighted measure, the random walk forced to travel in the direction of $x$ and in the potential $V_\lambda$ does so at inverse speed $\alpha'_\lambda(x)$. This will be a crucial information in order to devise ``strategies'' to make the random walk be at a specific place at time $n$. Another important ingredient is the possibility for the walk to wait in a favorable place (i.e.\ a clearing) if it arrives too early at a prescribed location. The next proposition ensures that there are always clearings nearby.

\begin{prop}
\label{clearings}
Recall the notion of $(\eps,R)$-clearing defined in (\ref{defclearing}), and let $\xi = 1-1/d$. For any $\eps > 0, R \in \N$, there exists $C, \varrho > 0$ such that	
$$
\P\ll[
\begin{array}{l}
\text{for any } z \in \Delta^g(0), \|z\|_1 \le n \text{ and there is an } (\eps,R)\text{-clearing}\\
\text{connected to } z \text{ through a healthy path of length at most } \varrho n
\end{array}
\rr]  \ge 1- e^{-Cn^{\xi}}.
$$
As a consequence, there exists $C' > 0$ such that for all but a finite number of $x$, every $z \in \Delta^g(x)$ is such that
$\|x-z\|_1 \le C' (\log \|x\|_1)^{1/\xi}$ and $z$ is connected to an $(\eps,R)$-clearing through a healthy path of length at most $C' (\log \|x\|_1)^{1/\xi}$.
\end{prop}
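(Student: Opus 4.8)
The plan is to derive the quantitative (first) display by splitting its complementary event into two pieces, and then to obtain the asymptotic (second) statement from it by Borel--Cantelli. Without loss of generality I would assume $\eps \le M$: replacing $\eps$ by $\min(\eps,M)$ only strengthens the conclusion, and then every $(\eps,R)$-clearing consists of healthy sites, so that ``a healthy path ending at an $(\eps,R)$-clearing'' is unambiguous. Recall from \eqref{defDelta} that $\Delta^g(0)=\bigcup_{j\in\ov{\partial}^*\mclC_0}\CC_j\subset\bigcup_{j\in\ovmclC_0}B'_j$; since $\ovmclC_0$ is $*$-connected and contains $0$, this forces $\Delta^g(0)\subset D(0,c\,|\ovmclC_0|)$ for a constant $c=c(N)$, so that Proposition~\ref{tailradius} gives $\P[\Delta^g(0)\not\subset D(0,n)]\le e^{-c'n^\xi}$. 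Moreover $\Delta^g(0)\subset\mathbf{C}'_\infty$ by \eqref{subsetcinfty}. Consequently, the event complementary to the one displayed in the proposition is contained in $\{\Delta^g(0)\not\subset D(0,n)\}$ together with the event $\mcl{B}_n$ that some $z\in\mathbf{C}'_\infty\cap D(0,n)$ is not joined to an $(\eps,R)$-clearing by a healthy path of length at most $\varrho n$; it remains to choose $\varrho$ so that $\P[\mcl{B}_n]\le e^{-Cn^\xi}$.

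To bound $\P[\mcl{B}_n]$ I would fix $z\in D(0,n)$, combine two facts, and then union over $z$. First, a density estimate with surface-order large deviations for supercritical percolation (Pisztora-type renormalization; see \cite{anpi} and the references therein, or \cite{grim}): with probability at least $1-e^{-cn^{d-1}}$ the annulus $D(z,n/2)\setminus D(z,n/4)$ contains at least $c_2 n^d$ sites $w$ that are simultaneously $(\eps,R)$-clearing-centres and elements of $\mathbf{C}'_\infty$ --- in particular at least one such $w$, necessarily with $\|z-w\|_1\ge n/4$. Second, the Antal--Pisztora estimate \eqref{e:anpi}: for each possible position $w$ in the annulus, the probability that $z,w\in\mathbf{C}'_\infty$ yet no healthy path of length $<\varrho_0\|z-w\|_1$ joins them is at most $e^{-C_0\|z-w\|_1}\le e^{-C_0 n/4}$, and there are only polynomially many such $w$. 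Hence, writing $\varrho:=\varrho_0$, the probability that $z\in\mathbf{C}'_\infty$ but $z$ is not joined to an $(\eps,R)$-clearing by a healthy path of length $\le\varrho n$ is at most $e^{-cn^{d-1}}+|D(0,n)|\,e^{-C_0 n/4}\le e^{-c''n}$ for $n$ large; summing over $z\in D(0,n)$ and using $d-1\ge 1>\xi$ gives $\P[\mcl{B}_n]\le e^{-Cn^\xi}$ for all $n$ large, which together with the bound on $\P[\Delta^g(0)\not\subset D(0,n)]$ proves the first statement.

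The second statement then follows by applying the first with the box centred at an arbitrary $x$ (translation invariance) and $n=n(x):=C_1(\log\|x\|_1)^{1/\xi}$: since $n(x)^\xi=C_1^\xi\log\|x\|_1$, the failure probability is at most $2\|x\|_1^{-C C_1^\xi}$, which is summable over $x\in\Z^d$ once $C_1$ is large enough. By Borel--Cantelli, for all but finitely many $x$ one has simultaneously $\|x-z\|_1\le n(x)$ for every $z\in\Delta^g(x)$, and --- since $\Delta^g(x)\subset\mathbf{C}'_\infty\cap D(x,n(x))$ --- every $z\in\Delta^g(x)$ joined to an $(\eps,R)$-clearing by a healthy path of length at most $\varrho\,n(x)$; taking $C':=\max(1,\varrho)C_1$ finishes the proof. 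The main obstacle is the density input for the annulus: one must reconcile the fact that $(\eps,R)$-clearings are a positive-density but \emph{local} feature of the configuration with ``lying in the infinite cluster'', which is global. The way I would make this precise is to condition on the local giant cluster $\mathbf{C}^{\mathrm{loc}}$ of $D(z,n)$ (which is $\subset\mathbf{C}'_\infty$ and has density bounded below throughout the annulus, off an event of probability $e^{-cn^{d-1}}$); conditionally on the \emph{set} $\mathbf{C}^{\mathrm{loc}}$, the potentials $(V(y))_{y\in\mathbf{C}^{\mathrm{loc}}}$ are i.i.d.\ with the law of $V(0)$ given $\{V(0)\le M\}$, so inspecting $\gtrsim n^d$ disjoint blocks contained in $\mathbf{C}^{\mathrm{loc}}$ inside the annulus produces, off a further event of probability $e^{-cn^d}$, a block that is entirely $\le\eps$ and hence a clearing-centre inside $\mathbf{C}^{\mathrm{loc}}\subset\mathbf{C}'_\infty$. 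All the remaining steps (the geometric observations about $\Delta^g$, the Antal--Pisztora union bound, and the Borel--Cantelli bookkeeping with the exponents $\xi$) are routine.
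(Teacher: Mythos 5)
Your overall decomposition coincides with the paper's: bound the diameter of $\Delta^g(0)$ via Proposition~\ref{tailradius}, note $\Delta^g(0)\subset\mathbf{C}'_\infty$ via \eqref{subsetcinfty}, produce an $(\eps,R)$-clearing in $\mathbf{C}'_\infty$ at distance of order $n$, join it to any $z\in\Delta^g(0)$ by the Antal--Pisztora estimate \eqref{e:anpi}, and conclude with Borel--Cantelli at scale $n=C_1(\log\|x\|_1)^{1/\xi}$. All of that bookkeeping, including the observation that any stretched-exponential rate beats the required $e^{-Cn^{\xi}}$, is sound.

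The gap sits exactly where you locate ``the main obstacle'', and the fix you sketch fails for $R\ge 1$. Knowing that the local giant cluster $\mathbf{C}^{\mathrm{loc}}$ has density bounded below in every sub-box of the annulus does \emph{not} yield even one, let alone $\gtrsim n^d$ disjoint, full blocks $B(w,R)$ \emph{contained} in $\mathbf{C}^{\mathrm{loc}}$: a set of density $1-(2R+1)^{-d}$ can omit one site from every translate of $B(0,R)$ and hence contain no such block. Containment is not a detail you may drop, because a clearing-centre requires $V\le\eps$ on \emph{all} of $B(w,R)$; once you condition on $\mathbf{C}^{\mathrm{loc}}$ (equivalently, on the healthy/unhealthy indicators that determine it), any site of $B(w,R)$ outside the cluster may already be forced to be unhealthy, i.e.\ to satisfy $V>M\ge\eps$, so the conditional probability that the block is a clearing can be $0$. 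The paper's proof of the corresponding claim \eqref{clearingisthere} avoids conditioning on the cluster altogether: it tiles $B(0,n)$ by sub-boxes of side $n^{\chi}$ and uses that the events ``$w$ is an $(\eps,R)$-clearing and $\ov{\partial}B(w,R)$ is connected to $\underline{\partial}B(w,n^{\chi})$ by a healthy path'' are independent across sub-boxes, each of probability at least some $p_0>0$ (this is where one needs $\P[V(0)\le\eps]>0$, i.e.\ the standing assumption that $0$ lies in the support of $V(0)$, which your argument also uses implicitly). It then upgrades ``connected to the sub-box boundary'' to ``lies in $\mathbf{C}'_\infty$'' by excluding finite healthy clusters of diameter at least $n^{\chi}$ via \cite[Theorem~8.21]{grim}; balancing $e^{-cn^{d(1-\chi)}}$ against $e^{-cn^{\chi}}$ at $\chi=d/(d+1)$ gives probability at least $1-e^{-Cn^{d/(d+1)}}$, which suffices since $\xi=(d-1)/d<d/(d+1)$. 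Replacing your density step by this argument (or by a genuine Pisztora-type coarse-graining for the local pattern ``clearing-centre attached to the crossing cluster'', if you insist on the surface-order rate $e^{-cn^{d-1}}$) makes the rest of your proof go through unchanged.
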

\begin{proof}
We recall that we write $\mathbf{C}'_\infty$ for the infinite healthy cluster. We start by proving that there exists $C > 0$ such that
\begin{equation}
\label{clearingisthere}
\P[\text{there is an } (\eps,R)\text{-clearing in } \mathbf{C}'_\infty \cap B(0,n)] \ge 1-e^{-Cn^{d/(d+1)}} \ge 1 - e^{-Cn^{\xi}}.
\end{equation}
We chop the box $B(0,n)$ into disjoint sub-boxes of size $n^\chi$, with $0 < \chi < 1$ to be determined. There may remain sites which do not fit into a sub-box contained in $B(0,n)$ due to boundary effects, we discard them. Let us write $\mcl{S}_n$ for the set of centres of the sub-boxes thus constructed. The cardinality of $\mcl{S}_n$ is at least $C n^{d(1-\chi)}$. For any $x \in \mcl{S}_n$, let $\mcl{A}_x$ be the event defined by:
$$
x \text { is an } (\eps,R)\text{-clearing and } \ov{\partial} B(x,R) \text{ is connected to } \underline{\partial} B(x,n^{\chi}) \text{ via a healthy path}.
$$
Since the probability for $x$ to belong to $\mathbf{C}_\infty'$ is non-zero, there exists $p_0 > 0$ such that $\P[\mcl{A}_x] \ge p_0$ uniformly over $n$. Hence, with high probability, there exists $x \in \mcl{S}_n$ such that $\mcl{A}_x$ happens, or more precisely,
\begin{equation}
\label{existsinSn}
1-\P\ll[\bigcup_{x \in S_n}\mcl{A}_x\rr] \le (1-p_0)^{|\mcl{S}_n|} \le \exp\ll(Cn^{d(1-\chi)} \log(1-p_0)\rr).
\end{equation}

On the other hand, it is very unlikely that in $B(0,n)$, a healthy cluster of size at least $n^\chi$ exists outside of $\mathbf{C}_\infty'$. Indeed, this probability is bounded by
$$
\sum_{x \in B(0,n)} \P[n^\chi \le \mathrm{diam}(\text{healthy cluster containing } x) < + \infty],
$$
which is smaller than $e^{-C n^\chi}/C$ for some $C > 0$ due to \cite[Theorem~8.21]{grim}. Taking $\chi = d/(d+1)$ yields claim (\ref{clearingisthere}).

Recall also that, due to Proposition~\ref{tailradius}, 
$$
\P[\Delta^g(0) \subset B(0,n)] \ge 1 - e^{-cn^{\xi}}.
$$

In order to conclude, it thus suffices to see that any two points of $\mathbf{C}'_\infty \cap B(0,n)$ can be connected via a short healthy path. Considering the bound \eqref{e:anpi}, it is clear that outside of an event of probability less than $e^{-Cn}$, any two points of $\mathbf{C}'_\infty \cap B(0,n)$ can be connected to one another through a path of length at most $\varrho n$, and this finishes the proof.
\end{proof}

Let us now give some useful properties of the function $I$.
\begin{prop}
\label{ratefunction}
\begin{enumerate}
\item The function $I$ is a convex lower semi-continuous good rate function.
\item The set 
$$
\mathrm{dom}(I) = \{ x \in \R^d : I(x) < + \infty \}
$$
is convex and contains a neighbourhood of the origin.
\item
For any open $O \subset \R^d$, one has 
$$
\inf_{O \cap \Q^d} I = \inf_O I.
$$
\end{enumerate}	
\end{prop}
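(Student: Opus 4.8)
The plan is to exploit throughout the representation $I(x)=\sup_{\lambda\ge 0}\big(\alpha_\lambda(x)-\lambda\big)$ together with the bounds on $\alpha_\lambda$ already available. \emph{Part (1).} Each map $x\mapsto\alpha_\lambda(x)-\lambda$ is convex (a norm shifted by a constant) and continuous, so $I$, being their pointwise supremum, is convex and lower semi-continuous; it is $[0,+\infty]$-valued since $I(x)\ge\alpha_0(x)\ge 0$, hence a rate function. To see that it is \emph{good}, I would invoke the lower bound $\alpha_\lambda(x)\ge\big(\lambda-\log\E[e^{-V(0)}]\big)\|x\|_1\ge\lambda\|x\|_1$, which is (\ref{lowerboundonalpha}) applied to the potential $V_\lambda$ and is precisely the estimate already used in the proof of (\ref{ldclosed}). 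This gives $I(x)\ge\sup_{\lambda\ge 0}\lambda(\|x\|_1-1)$, which equals $+\infty$ whenever $\|x\|_1>1$. Hence $\mathrm{dom}(I)$ is contained in the compact closed $\|\cdot\|_1$-ball of radius $1$, and since the sublevel sets of the lsc function $I$ are closed subsets of this ball, they are compact.

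\emph{Part (2).} Convexity of $\mathrm{dom}(I)$ is immediate from convexity of $I$. For the neighbourhood of the origin I would produce a matching upper bound for $\alpha_\lambda$ that is affine in $\lambda$. Recall $\ov{a}_\lambda(0,x)=\hata_\lambda(0,x)+v_\lambda(x)$ and $\alpha_\lambda(x)=\inf_n n^{-1}\E[\ov{a}_\lambda(0,nx)]$ (see (\ref{alphalambdainf})). The analogue of the second part of Proposition~\ref{integrability} for the potential $V_\lambda$ bounds $\hata_\lambda(0,nx)$ by $\sum_{z\in\gamma}v_\lambda(z)$ for any nearest-neighbour path $\gamma$ from $0$ to $nx$; choosing a monotone lattice path of length $n\|x\|_1$ and using, as in (\ref{upperboundv}), that every site of $\Delta^g(z)$ is healthy so that $v_\lambda(z)\le (M+\log(2d)+\lambda)(2N+1)^d\,|\ovmclC_{i(z)}|$, one obtains, for $x\in\Z^d$,
\[
\alpha_\lambda(x)\le (M+\log(2d)+\lambda)\,B\,\|x\|_1,\qquad B:=(2N+1)^d\,\E[|\ovmclC_0|]<+\infty,
\]
where finiteness of $B$ follows from Proposition~\ref{tailradius}; this extends to all $x\in\R^d$ by homogeneity and continuity. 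Consequently, if $\|x\|_1<1/B$ the coefficient of $\lambda$ in $\alpha_\lambda(x)-\lambda$ is negative, so the supremum defining $I(x)$ is attained at $\lambda=0$ and $I(x)\le (M+\log(2d))B\|x\|_1<+\infty$. Thus the open $\|\cdot\|_1$-ball of radius $1/B$ lies in $\mathrm{dom}(I)$.

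\emph{Part (3).} The inequality $\inf_{O\cap\Q^d}I\ge\inf_O I$ is trivial. For the converse, note first that $I(0)=\sup_{\lambda\ge 0}(-\lambda)=0$ and, by part (2), $0\in\mathrm{int}(\mathrm{dom}(I))$; since a convex function is continuous on the interior of its domain, $I$ is continuous there. Fix $x\in O$ with $I(x)<+\infty$ and $\eps>0$. For $t\in(0,1)$ set $x_t=(1-t)x$; convexity gives $I(x_t)\le (1-t)I(x)+tI(0)\le I(x)$, while $x_t\to x$, so $x_t\in O$ for $t$ small, and $x_t=(1-t)x+t\cdot 0\in\mathrm{int}(\mathrm{dom}(I))$ by the segment property of convex sets. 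By continuity of $I$ at $x_t$ there is an open set $U\subset O$ containing $x_t$ on which $I<I(x)+\eps$; picking any $q\in U\cap\Q^d$ gives $\inf_{O\cap\Q^d}I\le I(x)+\eps$. Letting $\eps\to 0$ and then taking the infimum over those $x\in O$ with $I(x)<+\infty$ yields $\inf_{O\cap\Q^d}I\le\inf_O I$ (if $\inf_O I=+\infty$ there is nothing to prove, as then $I\equiv+\infty$ on $O\supset O\cap\Q^d$).

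The step I expect to require the most care is the affine-in-$\lambda$ upper bound for $\alpha_\lambda$ in part (2), since controlling the $\lambda$-dependence uniformly in $x$ is what makes $0$ an \emph{interior} point of $\mathrm{dom}(I)$; once that is in hand, the remaining ingredients are the standard facts of finite-dimensional convex analysis (continuity on the interior of the effective domain, and the segment property) together with routine bookkeeping.
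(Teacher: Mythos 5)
Your proposal is correct and follows essentially the same route as the paper's: the same lower bound $\alpha_\lambda(x)\ge \lambda\|x\|_1$ (from (\ref{lowerboundonalpha})) gives goodness in part (1); an affine-in-$\lambda$ upper bound on $\alpha_\lambda$ places a ball around the origin inside $\mathrm{dom}(I)$ in part (2) (the paper obtains such a bound directly from concavity of $\lambda\mapsto\alpha_\lambda(e_i)$ plus convexity of $\mathrm{dom}(I)$, whereas you rederive it explicitly from the renormalized path estimate --- both work); and part (3) rests in both cases on approximating $x$ radially by convex combinations with the origin, the paper tracking $I$ along the convex hull of $D(0,r)\cup\{x\}$ via the uniform bound on $D(0,r)$, while you invoke the equivalent fact that a finite-dimensional convex function is continuous on the interior of its effective domain. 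Both implementations are sound.
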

\begin{proof}
Since $\alpha$ is a norm, it is clear that $x \to \alpha_\lambda(x)$ is convex, and thus so is $I$. As a supremum of continuous functions, $I$ is also lower semi-continuous. We have seen at the beginning of the proof of (\ref{ldclosed}) (in the previous section) that $I$ is infinite outside of a bounded set, so it is clearly a good rate function.

The convexity of the function $I$ implies the convexity of the set $\mathrm{dom}(I)$. Let $e_i$ be the $i$-th vector of the canonical basis of $\R^d$. Due to the concavity of $\lambda \mapsto \alpha_\lambda(e_i)$, there exists $C_i$ such that $\alpha_\lambda(e_i) \le C_i(\lambda +1)$, and from this one can check that $\pm e_i/(2C_i)$ must be in $\mathrm{dom}(I)$. Convexity then ensures that a full neighbourhood of the origin is in $\mathrm{dom}(I)$. In fact, we have proved that for $r > 0$ small enough, the function $I$ is uniformly bounded on $D(0,r)$.

For the last part, let $O$ be an open subset of $\R^d$, and $x \in O$. We will show that
\begin{equation}
\label{infO}
\inf_{O \cap \Q^d} I \le I(x).	
\end{equation}
If $x \notin \mathrm{dom}(I)$, then there is nothing to prove. Otherwise, let $\mcl{T}$ be the convex hull of $D(0,r) \cup \{x\}$. We claim that
\begin{equation}
\label{limhull}
\lim_{\substack{z \to x \\ z \in \mcl{T}}} I(z) = I(x).	
\end{equation}
Lower semi-continuity of $I$ guarantees that 
$$
\liminf_{{z \to x}} I(z) \ge I(x).	
$$
On the other hand, let $z_n \in \mcl{T}$ be a sequence tending to $x$ as $n$ tends to infinity. One can represent $z_n$ as $\mu_n x + \nu_n y_n$, where $\mu_n,\nu_n \ge 0$ satisfy $\mu_n + \nu_n \le 1$ and $\lim_{n \to \infty} \mu_n = 1$, and $y_n \in D(0,r)$. Convexity yields that
$$
I(z_n) \le \mu_n I(x) + \nu_n I(y_n).
$$
Since $I(y_n)$ is uniformly bounded, it follows that
$$
\limsup_{n \to +\infty} I(x_n) \le I(x),
$$
and thus (\ref{limhull}) is proved. To conclude, 
note that $x$ is in the closure of $\mcl{T} \cap \Q^d$, so one can find a sequence $x_n \in \mcl{T} \cap \Q^d$ such that 
$$
\lim_{n \to +\infty} x_n = x \text{  and } \lim_{n \to +\infty} I(x_n) = I(x).
$$
The set $O$ being open, the sequence $(x_n)$ is ultimately in $O$, thus justifying inequality (\ref{infO}).
\end{proof}

\begin{prop}
\label{alllambda}
With probability one, the following three properties hold for any $\lambda \ge 0$.
\begin{enumerate}
\item For any $x \in \Q^d$,
\begin{equation}
\label{alltriv}
\lim_{n \to +\infty} \frac{v_\lambda(nx)}{n} = 0.
\end{equation}
\item For any $x \in \Q^d$ and any $\rho_1,\rho_2 \in \Q$ satisfying $0\le \rho_1 < \rho_2$, 
\begin{equation}
\label{alllambda1}
\lim_{n \to + \infty} \frac{1}{n} \hata_\lambda(\rho_1nx,\rho_2nx) = (\rho_2 - \rho_1) \alpha_\lambda(x).
\end{equation}
\item For any $x \in \Q^d$, any $\rho_1,\rho_2 \in \Q$ satisfying $0\le \rho_1 < \rho_2$ and any sequence $x_n \in \Delta^g(\rho_1 nx)$, 
\begin{equation}
\label{alllambda2}
\lim_{n \to + \infty} \frac1n \td{a}_\lambda(x_n,\rho_2 nx) = (\rho_2 - \rho_1) \alpha_\lambda(x).
\end{equation}
\end{enumerate}
\end{prop}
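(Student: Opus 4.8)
The plan is to prove each of the three properties for fixed rational values of all parameters, intersect over the (countable) set of such choices, and then pass to arbitrary real $\lambda\ge0$ by sandwiching it between rationals, using that $\lambda\mapsto\hata_\lambda$ is nondecreasing and that $(\lambda,x)\mapsto\alpha_\lambda(x)$ is jointly continuous (Proposition~\ref{p:jointconti}). Part~(1) is the easiest: from (\ref{defv}) one has $v_\lambda(z)=v_0(z)+\lambda\,|\Delta^g(z)|$, and by (\ref{upperboundv}) together with Proposition~\ref{tailradius} both $v_0(z)$ and $|\Delta^g(z)|$ have stretched-exponential tails, uniformly in $z$. Since $(v_0(\lfloor nx\rfloor))_n$ and $(|\Delta^g(\lfloor nx\rfloor)|)_n$ are identically distributed, a Borel--Cantelli argument gives $v_0(nx)/n\to0$ and $|\Delta^g(nx)|/n\to0$ almost surely for each fixed $x\in\Q^d$; as neither statement involves $\lambda$, (\ref{alltriv}) follows for all $x\in\Q^d$ and all $\lambda\ge0$ simultaneously. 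From now on I assume (\ref{alltriv}) holds.

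For (2) and (3) I work with $\ov{a}_\lambda(y,z)=\hata_\lambda(y,z)+v_\lambda(z)$, which is subadditive by Proposition~\ref{p:triangle}. First, (\ref{alllambda2}) follows from (\ref{alllambda1}): for $x_n\in\Delta^g(\rho_1nx)$ one may walk inside the $\Z^d$-connected set $\Delta^g(\rho_1nx)$ to the minimizer defining $\hata_\lambda(\rho_1nx,\rho_2nx)$ at cost at most $v_\lambda(\rho_1nx)$ (as in the proof of Proposition~\ref{p:triangle}), so $\hata_\lambda(\rho_1nx,\rho_2nx)\le\td{a}_\lambda(x_n,\rho_2nx)\le v_\lambda(\rho_1nx)+\hata_\lambda(\rho_1nx,\rho_2nx)$, and (\ref{alltriv}) closes the gap. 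Next, by the argument of section~\ref{s:ptpnomoment} (reducing a rational direction $y=w/q$ to integer multiples of $w\in\Z^d$, and using (\ref{alltriv})) one gets $\tfrac1n\hata_\lambda(0,ny)\to\alpha_\lambda(y)$ for every $y\in\Q^d$, hence also $\tfrac1n\ov{a}_\lambda(0,\rho nx)\to\rho\,\alpha_\lambda(x)$ for $\rho\in\Q$. Applying subadditivity to $\ov{a}_\lambda(0,\rho_2nx)\le\ov{a}_\lambda(0,\rho_1nx)+\ov{a}_\lambda(\rho_1nx,\rho_2nx)$ then yields the lower bound $\liminf_n\tfrac1n\ov{a}_\lambda(\rho_1nx,\rho_2nx)\ge(\rho_2-\rho_1)\alpha_\lambda(x)$, so only the matching upper bound for (\ref{alllambda1}) remains, for fixed rational $\lambda,x,\rho_1,\rho_2$.

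The upper bound is the genuinely delicate point: because we have no uniform shape theorem, $\hata_\lambda(\rho_1nx,\rho_2nx)$ has a \emph{moving} base point, so its a.s.\ behaviour is not read off directly from the subadditive ergodic theorem applied to $\hata_\lambda(0,\cdot)$. Choosing $w\in\Z^d$ with $qx=w$ and a common denominator $p$ of $\rho_1,\rho_2$, along $n\in pq\N$ both endpoints are integer multiples of $w$; the general $n$ reduces to this subsequence since moving an endpoint by a bounded amount changes $\hata_\lambda$, via Propositions~\ref{p:triangle} and~\ref{integrability}, by at most finitely many $v_\lambda$-values in a bounded neighbourhood of the endpoints, and these are $o(n)$ a.s.\ by Borel--Cantelli. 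On the subsequence the claim becomes: for the stationary, ergodic, integrable subadditive array $Y_{j,k}=\ov{a}_\lambda(jw,kw)$, which satisfies $Y_{0,k}/k\to\gamma:=\alpha_\lambda(w)$, and fixed integers $0\le a<b$, one has $\tfrac1\ell Y_{a\ell,b\ell}\to(b-a)\gamma$ a.s. Fix $\eps>0$ and $m$ with $\tfrac1m\E[Y_{0,m}]<\gamma+\eps$; telescoping $[a\ell,b\ell]$ into blocks of length $m$ aligned with $m\N$ (two end pieces having length $<m$), Proposition~\ref{p:triangle} gives $Y_{a\ell,b\ell}\le(S_{j_1}-S_{j_0})+\Sigma_\ell$, where $S_i=\sum_{j<i}Y_{jm,(j+1)m}$, $j_0/\ell\to a/m$, $j_1/\ell\to b/m$, and $\Sigma_\ell$ is a sum of fewer than $2m$ terms $Y_{k,k+1}$. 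Birkhoff's ergodic theorem gives $S_i/i\to\E[Y_{0,m}]$ a.s., whence $\tfrac1\ell(S_{j_1}-S_{j_0})\to(b-a)\tfrac1m\E[Y_{0,m}]<(b-a)(\gamma+\eps)$; and since $\ov{a}_\lambda(0,w)$ has finite moments of every order (Proposition~\ref{integrability}), $\tfrac1\ell\max_{|k|\le C\ell}Y_{k,k+1}\to0$ a.s.\ by Borel--Cantelli, so $\Sigma_\ell=o(\ell)$. Letting $\eps\to0$ gives $\limsup_\ell\tfrac1\ell Y_{a\ell,b\ell}\le(b-a)\gamma$, which with the lower bound proves (\ref{alllambda1}) on the subsequence, hence for all $n$, and therefore also (\ref{alllambda2}). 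The block decomposition of this last paragraph — reducing the moving-window behaviour to Birkhoff's theorem at a fixed mesh plus a crude bound on finitely many boundary terms — is the device that replaces the uniform shape theorem that we do not have.
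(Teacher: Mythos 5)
Your proposal is correct and follows essentially the same route as the paper: subadditivity of $\ov{a}_\lambda$ plus the radial limit for the lower bound, a block decomposition along a fixed integer mesh combined with Birkhoff's ergodic theorem and $o(n)$ control of the boundary $v_\lambda$-terms for the upper bound (the paper's display (\ref{allez}) is exactly your telescoping, with the mesh sent to infinity via (\ref{alphalambdainf}) instead of fixed by an $\eps$), the sandwich (\ref{firststepsubad}) to deduce part (3), and monotonicity in $\lambda$ together with Proposition~\ref{p:jointconti} to pass from rational to arbitrary $\lambda$. The only (harmless) variations are that you treat general rational $\rho_1<\rho_2$ directly rather than reducing to $\rho_1=1$, $\rho_2\in\N$, and you handle the uniformity in $\lambda$ for part (1) by splitting $v_\lambda=v_0+\lambda|\Delta^g(\cdot)|$ rather than by monotonicity.
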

\begin{rem}
This proposition is certainly not optimal. In fact, it may well be that a uniform shape theorem comparable to \cite[Theorem~13]{zer} holds. This would however require more effort, and as the present version is sufficient for our purpose, I did not try to pursue this question further.
\end{rem}
\begin{proof}
We first verify that the three claims hold true for any fixed $\lambda \ge 0$. 

The first claim is true due to the fact that $(v_\lambda(nx))_{n \in \N}$ are identically distributed and integrable random variables, see Proposition~\ref{integrability}.

Let $x \in \Q^d$. With the equivalent of Proposition~\ref{radiallimits} for $\hata_\lambda$, we already know that 
\begin{equation}
\label{radiallim2}
\frac1n \hata_\lambda(0,nx) \xrightarrow[n \to + \infty]{\text{a.s.}} \alpha(x).	
\end{equation}
(In fact, the shape theorem proved in Proposition~\ref{p:shapehata} gives us a stronger result, see Remark~\ref{rem1}.)
Claim (2) is thus true if $\rho_1 = 0$. Otherwise, it suffices to verify (\ref{alllambda1}) when $\rho_1 = 1$ and $\rho_2 > 1$ is an integer. Recall from Proposition~\ref{p:triangle} that
$$
\hata_\lambda(0,\rho_2 nx) \le \hata_\lambda(0,nx) + \hata_\lambda(nx,\rho_2 nx) + v_\lambda(nx).
$$
Using (\ref{alltriv}), we are led to
$$
\liminf_{n \to + \infty} \frac1n \hata_\lambda(nx, \rho_2 nx) \ge (\rho_2 - 1) \alpha(x).
$$
Let $k$ be an integer such that $kx \in \Z^d$. Using Proposition~\ref{p:triangle} iteratively, we obtain
\begin{multline}
\label{allez}
\hata_\lambda(nx,\rho_2nx) \le \sum_{j = \lceil n/k \rceil}^{\lfloor \rho_2 n/k \rfloor-1} \ov{a}_\lambda\big(jkx,(j+1)kx\big)  \\
+ \ov{a}_\lambda\big(nx, \lceil n/k \rceil kx) + \hata_\lambda\big(\lfloor \rho_2n/k \rfloor kx, \rho_2nx\big),
\end{multline}
where $\ov{a}$ is as in (\ref{defova}). The summands indexed by $j$ in the r.h.s.\ of (\ref{allez}) are identically distributed (note that due to lattice effects, this would not be true without the condition $kx \in \Z^d$). The usual ergodic theorem thus ensures that
$$
\frac1m \sum_{j = 0}^{m-1} \ov{a}_\lambda\big(jkx,(j+1)kx\big) \xrightarrow[m \to + \infty]{\text{a.s.}} \E[\ov{a}_\lambda(0,kx)],
$$
from which it follows that
$$
\frac{1}{n} \sum_{j = \lceil n/k \rceil}^{\lfloor \rho_2 n/k \rfloor-1} \ov{a}_\lambda\big(jkx,(j+1)kx\big) \xrightarrow[n \to + \infty]{\text{a.s.}} \frac{\rho_2 - 1}{k} \ \E[\ov{a}_\lambda(0,kx)].
$$
Besides, we can use Proposition~\ref{integrability} to bound the last two terms in the r.h.s.\ of (\ref{allez}). For the first one,
$$
\ov{a}_\lambda\big(nx, \lceil n/k \rceil kx) \le \sum_{j = 0}^{k} v_\lambda(nx+j),
$$
and a similar inequality holds for the second one. These observations yield, using (\ref{alltriv}) again,
$$
\limsup_{n \to + \infty} \frac1n \hata_\lambda(nx, \rho_2 nx) \le \frac{\rho_2 - 1}{k} \ \E[\ov{a}_\lambda(0,kx)].
$$
The conclusion then follows from (\ref{alphalambdainf}).

For the third claim, note that using inequality (\ref{firststepsubad}) with $x'=x_n\in \Delta^g(\rho_1 nx)$, $y = \rho_1 nx$ and $z = \rho_2 nx$, it comes that
$$
\hata_\lambda(\rho_1nx,\rho_2nx) \le \td{a}_\lambda(x_n,\rho_2 nx) \le \hata_\lambda(\rho_1 nx,\rho_2 nx) + v_\lambda(\rho_1nx),
$$
hence (\ref{alllambda2}) is a consequence of (\ref{alltriv}) and (\ref{alllambda1}).

We now need to see that with probability one, the three claims of the proposition hold uniformly over $\lambda$. The claims hold with probability one for all rational values of $\lambda$. To conclude, it suffices to note that the quantities under the limits in the l.h.s.\ of (\ref{alltriv}), (\ref{alllambda1}) and (\ref{alllambda2}) are monotone in $\lambda$, while the members of the r.h.s.\ are continuous in $\lambda$.
\end{proof}

\begin{prop}
\label{velocity}
With probability one, for any $\lambda > 0$, any $x \in \Q^d$ and any $s_1,s_2 \in \R_+$ such that
\begin{equation}
\label{cond:velocity}
(s_1,s_2) \cap [\alpha'_{\lambda+}(x),\alpha'_{\lambda-}(x)] \neq \emptyset,
\end{equation}
one has
\begin{equation}
\label{eq:velocity}
\lim_{n \to +\infty} \frac{1}{n} \hata_\lambda(0,nx,ns_1,ns_2) = \alpha_\lambda(x),
\end{equation}
and the same is true if $\hata_\lambda(0,nx,ns_1,ns_2)$ is replaced by $\td{a}_\lambda(x',nx,ns_1,ns_2)$, for any $x' \in \Delta^g(0)$.
\end{prop}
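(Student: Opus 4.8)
The plan is as follows. Both the ``$\td{a}_\lambda$'' variant and the ``almost surely for all $\lambda$'' clause are routine once the $\hata_\lambda$ statement is proved for a fixed $\lambda$: the former follows from $\hata_\lambda(0,nx,ns_1,ns_2)\le\td{a}_\lambda(x',nx,ns_1,ns_2)$ together with a one-sided bound $\td{a}_\lambda(x',nx,ns_1,ns_2)\le\hata_\lambda(0,nx,ns_1',ns_2)+v_\lambda(0)$ obtained by walking from $x'$ to the optimal site of $\Delta^g(0)$ inside the $\Z^d$-connected set $\Delta^g(0)$ (Proposition~\ref{observ}), whose size is controlled by Proposition~\ref{integrability}, with a narrower window $[s_1',s_2]$ still meeting $[\alpha_{\lambda+}'(x),\alpha_{\lambda-}'(x)]$; the latter follows from the usual monotonicity-plus-continuity trick (the quantities $\frac1n\hata_\lambda(0,nx,ns_1,ns_2)$ are nondecreasing in $\lambda$ and $\lambda\mapsto\alpha_\lambda(x)$ is continuous by Proposition~\ref{p:jointconti}), exactly as in Propositions~\ref{alllambda} and~\ref{p:unifliminf}. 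The lower bound $\liminf_n\frac1n\hata_\lambda(0,nx,ns_1,ns_2)\ge\alpha_\lambda(x)$ is immediate, since restricting the admissible trajectories only increases the cost, so $\hata_\lambda(0,nx,ns_1,ns_2)\ge\hata_\lambda(0,nx)=n\alpha_\lambda(x)+o(n)$ by Proposition~\ref{alllambda}.

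The main point is therefore the matching upper bound. The basic device is a perturbation in $\lambda$. Writing $e^{-\hata_\lambda(0,nx)}=M^n_<+M^n_{\mathrm{mid}}+M^n_>$ for the decomposition of the (optimal) weight according to $\tau_g(nx)<ns_1$, $ns_1\le\tau_g(nx)\le ns_2$, $\tau_g(nx)>ns_2$, one has, for any $\delta_1>0$, $M^n_<\le e^{\delta_1 ns_1}e^{-\hata_{\lambda+\delta_1}(0,nx)}$ (multiply and divide the weight by $e^{-\delta_1\tau_g}$ and use $\tau_g\le ns_1$), and similarly $M^n_>\le e^{-\delta_2 ns_2}e^{-\hata_{\lambda-\delta_2}(0,nx)}$ for $0<\delta_2<\lambda$ (this is where $\lambda>0$ enters); using $\hata_{\lambda\pm\delta}(0,nx)=n\alpha_{\lambda\pm\delta}(x)+o(n)$ and the fact that the chord slopes $\delta_1^{-1}(\alpha_{\lambda+\delta_1}(x)-\alpha_\lambda(x))$ and $\delta_2^{-1}(\alpha_\lambda(x)-\alpha_{\lambda-\delta_2}(x))$ tend to $\alpha_{\lambda+}'(x)$ and $\alpha_{\lambda-}'(x)$, one can make $M^n_<$ (resp.\ $M^n_>$) exponentially negligible compared to $e^{-\hata_\lambda(0,nx)}$ as soon as $s_1<\alpha_{\lambda+}'(x)$ (resp.\ $s_2>\alpha_{\lambda-}'(x)$). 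When both strict inequalities hold, $M^n_{\mathrm{mid}}$ carries the whole mass and $\hata_\lambda(0,nx,ns_1,ns_2)\le-\log M^n_{\mathrm{mid}}=n\alpha_\lambda(x)+o(n)$; this already settles every $\lambda$ at which $\lambda\mapsto\alpha_\lambda(x)$ is differentiable (the hypothesis then reads $s_1<\alpha_\lambda'(x)<s_2$), and, via monotonicity and continuity, every $\lambda$ with $s_1<\alpha_{\lambda+}'(x)$. One can also partially use clearings here: inserting at the start of an optimal trajectory a sojourn of length about $ns_1$ in an $(\eps,R)$-clearing of the infinite healthy cluster near $\Delta^g(0)$ (Proposition~\ref{clearings}) shows $\hata_\lambda(0,nx,ns_1,\infty)\le\hata_\lambda(0,nx)+\eps'ns_1+o(n)$ with $\eps'$ arbitrarily small, which, combined with the bound on $M^n_>$ when $s_2>\alpha_{\lambda-}'(x)$, again forces $M^n_{\mathrm{mid}}$ to carry mass at exponential rate arbitrarily close to $\alpha_\lambda(x)$.

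The remaining (and only genuinely hard) situation is a corner point of $\lambda\mapsto\alpha_\lambda(x)$ with $\alpha_{\lambda+}'(x)\le s_1<s_2\le\alpha_{\lambda-}'(x)$, where the window lies inside the superdifferential and neither one-sided bound applies; here one must actually realise an ``interior'' optimal inverse speed. I would handle this—and in fact re-derive the whole upper bound uniformly—by introducing the speed-resolved cost: a two-parameter subadditivity, proved as in Proposition~\ref{p:triangle} but tracking the hitting time (two time windows add as a Minkowski sum), gives $\frac1n\hata_\lambda(0,nx,ns_1,ns_2)\to\beta_\lambda(s_1,s_2,x)$ a.s.\ for rational data, with $\beta_\lambda(s_1,s_2,x)=\inf_{s\in[s_1,s_2]}\beta_\lambda(s,x)$ where $\beta_\lambda(s,x):=\lim_{\eta\downarrow0}\beta_\lambda(s-\eta,s+\eta,x)$. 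Splitting the trip $0\to nx$ into a leg $0\to\lfloor\theta n\rfloor x$ traversed at inverse speed $\approx s$ and a leg $\lfloor\theta n\rfloor x\to nx$ at inverse speed $\approx s'$ yields $\beta_\lambda(\theta s+(1-\theta)s',x)\le\theta\beta_\lambda(s,x)+(1-\theta)\beta_\lambda(s',x)$, so $s\mapsto\beta_\lambda(s,x)$ is convex; tilting the weight by $e^{-(\mu-\lambda)\tau_g}$ gives $\beta_\lambda(s,x)\ge\alpha_\mu(x)-(\mu-\lambda)s$ for every $\mu\ge0$, while a Laplace-type decomposition of $e^{-\hata_\mu(0,nx)}$ over dyadic arrival-time windows gives $\alpha_\mu(x)=\inf_s\bigl(\beta_\lambda(s,x)+(\mu-\lambda)s\bigr)$; convexity then forces $\beta_\lambda(s,x)=\sup_{\mu\ge0}\bigl(\alpha_\mu(x)-(\mu-\lambda)s\bigr)$. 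This last expression is $\ge\alpha_\lambda(x)$ for every $s$ (take $\mu=\lambda$), with equality precisely when $s$ is a supergradient of $\mu\mapsto\alpha_\mu(x)$ at $\lambda$, i.e.\ $s\in[\alpha_{\lambda+}'(x),\alpha_{\lambda-}'(x)]$; since $(s_1,s_2)$ meets this interval and $\beta_\lambda(\cdot,x)$ is continuous in the interior of its domain, $\beta_\lambda(s_1,s_2,x)=\alpha_\lambda(x)$, which is the claim. The technical heart—hence the main obstacle—is setting up the two-parameter subadditivity (including the integrability needed for Kingman's theorem) and the Laplace estimates cleanly; the perturbation and clearing ingredients of the previous paragraph are exactly what is reused inside that argument.
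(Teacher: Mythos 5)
Your lower bound and your first-paragraph perturbation argument are exactly the paper's Step~1: decompose according to $\tau_g(nx)<ns_1$, $\tau_g(nx)\in[ns_1,ns_2]$, $\tau_g(nx)>ns_2$, tilt $\lambda\mapsto\lambda\pm\delta$, and use the chord slopes of the concave map $\lambda\mapsto\alpha_\lambda(x)$ to make the two tails exponentially negligible when $s_1<\alpha'_{\lambda+}(x)$ and $s_2>\alpha'_{\lambda-}(x)$. Where you diverge is the corner-point case. The paper does \emph{not} introduce a speed-resolved rate function $\beta_\lambda(s,x)$: it proves a single windowed subadditivity inequality (its~(\ref{subaddspeed1}), with time windows adding exactly as you describe), writes the target inverse speed as $\rho\,\alpha'_{\lambda-}(x)+(1-\rho)\alpha'_{\lambda+}(x)$, and concatenates two legs, $0\to\rho nx$ at inverse speed close to $\alpha'_{\lambda-}(x)$ and $\rho nx\to nx$ at inverse speed close to $\alpha'_{\lambda+}(x)$; each leg then falls under Step~1 after replacing $\lambda$ by a nearby differentiability point $\lambda_1<\lambda<\lambda_2$ and paying the tilting cost $s_2'(\lambda-\lambda_1)$, resp.\ the analogous cost for $\lambda_2$. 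This needs only $\limsup$ upper bounds and the already-established radial limits of Proposition~\ref{alllambda} (part~(3) of which is precisely what handles the translated second leg). Your route --- a full two-parameter Kingman limit, convexity of $s\mapsto\beta_\lambda(s,x)$, and identification of its flat set with the superdifferential of $\mu\mapsto\alpha_\mu(x)$ by convex duality --- is structurally sound and more informative (it would compute $\beta_\lambda(s,x)$ for every $s$), but it is strictly harder than what the statement requires.

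Two caveats on your route. First, the integrability issue you flag is a genuine obstruction, not a formality: $\hata_\lambda(0,nx,ns_1,ns_2)=+\infty$ with positive probability whenever no admissible trajectory of length in $[ns_1,ns_2]$ joins $\Delta^g(0)$ to $\Delta^g(nx)$ (for instance when the chemical distance exceeds $ns_2$, an event of small but nonzero probability), so Kingman's theorem cannot be applied to the raw process; one must first build a ``time-wasting'' mechanism (sojourns in clearings, Proposition~\ref{clearings}) into the subadditive quantity and control its cost --- which is essentially the work that the paper's explicit two-leg construction does anyway. Second, the almost-sure uniformity over $\lambda$ does not follow from the monotonicity-plus-continuity trick ``exactly as in Proposition~\ref{p:unifliminf}'': hypothesis~(\ref{cond:velocity}) is not stable under perturbing $\lambda$ (at a corner, a window sitting inside $[\alpha'_{\lambda+}(x),\alpha'_{\lambda-}(x)]$ may meet no superdifferential at nearby rational $\lambda'$), so you would instead have to establish the duality formula $\beta_\lambda(s,x)=\sup_{\mu\ge0}\bigl(\alpha_\mu(x)-(\mu-\lambda)s\bigr)$ simultaneously for all rational $(\lambda,s)$ and extend it using monotonicity in $\lambda$ and convexity in $s$. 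The paper sidesteps both points by making its entire argument a deterministic consequence of the single full-measure event of Proposition~\ref{alllambda}.
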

\begin{rem}
The proposition can be interpreted as saying that under condition (\ref{cond:velocity}), which boils down to a simple $s_1 < \alpha'_\lambda(x) < s_2$ if the derivative exists, there is no additional cost in asking the velocity to be in the interval $[s_1,s_2]$ when travelling in the direction of $x$. 
\end{rem}
\begin{proof}
Throughout this proof, we always place ourselves on the set of full probability measure on which Proposition~\ref{alllambda} holds.
Since $\hata_\lambda(0,nx,ns_1,ns_2) \ge \hata_\lambda(0,nx)$, we have
$$
\liminf_{n \to +\infty} \frac{1}{n} \hata_\lambda(0,nx,ns_1,ns_2) \ge \alpha_\lambda(x).
$$
The proof of the converse inequality proceeds in several steps.

\noindent \emph{Step 1.} We first show that the proposition is true under the stronger assumption that $(s_1,s_2)$ entirely contains $[\alpha'_{\lambda+}(x),\alpha'_{\lambda-}(x)]$. Observing that, for any $x' \in \Delta^g(0)$, 
\begin{eqnarray*}
e^{-\td{a}_\lambda(x',nx,ns_2,\infty)} & = & \EE_{x'} \left[ \exp\left( -\sum_{k = 0}^{\tau_g(nx) - 1} (\lambda + V)(S_k) \right), \mcl{V}_{nx,ns_2,\infty} \right] \\
& \le & \EE_{x'} \left[ \exp\left( -\sum_{k = 0}^{\tau_g(nx) - 1} (\lambda - \mu + V)(S_k) \right)e^{-\mu \tau_g(nx)}, \mcl{V}_{nx,ns_2,\infty} \right] \\
& \le & e^{- n \mu s_2} e^{-\td{a}_{\lambda - \mu}(x',nx)}
\end{eqnarray*}
and using part (3) of Proposition~\ref{alllambda}, we derive that
\begin{equation}
\label{v2infty}
\liminf_{n \to +\infty} \frac{1}{n} \td{a}_\lambda(x',nx,n s_2,+\infty) \ge  \alpha_{\lambda - \mu}(x) + \mu s_2.	
\end{equation}
If $s_2 > \alpha'_{\lambda-}(x)$, we can find $\mu$ small enough such that the r.h.s.\ of (\ref{v2infty}) is strictly larger than $\alpha_\lambda(x)$. Similarly, one can show that
$$
\liminf_{n \to + \infty} \frac{1}{n} \td{a}_\lambda(0,nx,0,ns_1) \ge \alpha_{\lambda + \mu}(x) - \mu s_1,
$$
which can be made strictly larger than $\alpha_\lambda(x)$ if $\mu$ is small enough since $s_1 < \alpha'_{\lambda+}(x)$.
Noting finally that
$$
e^{-\td{a}_\lambda(x',nx)} \le e^{-\td{a}_\lambda(x',nx,0,ns_1)} + e^{-\td{a}_\lambda(x',nx,ns_1,ns_2)} + e^{-\td{a}_\lambda(x',nx,ns_2,+\infty)},
$$
we obtain that for any $x' \in \Delta^g(0)$, 
$$
\limsup_{n \to + \infty} \frac{1}{n}\td{a}_\lambda(x',nx,ns_1,ns_2) \le \alpha_\lambda(x),
$$
and this implies (\ref{eq:velocity}). Note that in particular, the proposition is proved if $\alpha'_{\lambda+}(x) = \alpha'_{\lambda-}(x)$.

\noindent \emph{Step 2.} In this step, we prove that, for any $x',y,z \in \Z^d$ and any $s_1',s_1'', s_2',s_2''\ge 0$ such that 
\begin{equation}
\label{equalitiesons}
s_1'+s_1'' \ge s_1 \text { and } s_2'+s_2'' \le s_2,	
\end{equation}
one has
\begin{equation}
\label{subaddspeed1}
\td{a}_\lambda(x',z,s_1,s_2)  \le \td{a}_\lambda\big(x',y,s_1', s_2'\big) + \hata_\lambda\big(y, z, s_1'',  s_2'' - |\Delta^g(y)|\big) + v_\lambda(y).
\end{equation}
This is similar to equation~(\ref{firststepsubad}), but ``taking the clock into account''. Note that
$$
\1_{\mcl{V}_{z,s_1,s_2}} \ge \1_{\mcl{V}_{y,s_1', s_2'}} \ll( \1_{\mcl{V}_{z,s_1'',s_2''}} \circ \Theta_{\tau_g(y)} \rr).
$$
In words, this means that if one reaches $\Delta^g(y)$ after a number of steps in $[s_1',s_2']$ and then reaches site $\Delta^g(z)$ after a number of steps in $[s_1'',s_2'']$, then one has travelled from $x'$ to $\Delta^g(z)$ in a number of steps contained in $[s_1,s_2]$. Following the proof of Proposition~\ref{p:triangle}, we can use this observation and apply the Markov property at time $\tau_g(y)$. To conclude, we need to see that for any $y' \in \Delta^g(y)$, 
\begin{multline}
\label{arf}
\EE_{y'}\ll[ \exp\ll(- \sum_{n = 0}^{\tau_g(z) - 1} V_\lambda(S_n) \rr) \1_{\mcl{V}_{z,s_1'',s_2''}} \rr] \\
\ge \exp\ll(-v_\lambda(y) - \hata_\lambda\big(y, z, s_1'', s_2'' - |\Delta^g(y)|\big) \rr).
\end{multline}
This can be shown to be true with the same argument as in the proof of Proposition~\ref{p:triangle}: we choose $\ov{y} \in \Delta^g(y)$ such that $\td{a}(\ov{y},z) = \hata(y,z)$, and $\gamma$ a simple path connecting $y'$ to $\ov{y}$ inside $\Delta^g(y)$. We then restrict the expectation on the l.h.s.\ of (\ref{arf}) to the event ``$S = \gamma$'', and apply the Markov property at the time when the walk visits $\ov{y}$. The walk does not make more than $|\Delta^g(y)|$ steps before reaching $\ov{y}$, and the result follows.

\noindent \emph{Step 3.} Let $\rho \in (0,1) \cap \Q$ and $\eta > 0$ be such that
$$
\rho \alpha'_{\lambda-}(x) + (1-\rho) \alpha'_{\lambda+}(x) + [-\eta,\eta] \subset (s_1,s_2).
$$
We apply inequality (\ref{subaddspeed1}) replacing $(y,z,s_1,s_1',s_1'',s_2,s_2',s_2'')$ by $(\rho n x,nx,ns_1$, $ns_1'$, $ns_1'',ns_2,ns_2',ns_2'')$, where 
\begin{equation*}
s_1' = \rho \alpha'_{\lambda-}(x), \quad s_2' = \rho(\alpha'_{\lambda-}(x) + \eta),
\end{equation*}
\begin{equation*}
s_1'' = (1-\rho) (\alpha'_{\lambda+}(x)-\eta), \quad  s_2'' = (1-\rho)( \alpha'_{\lambda+}(x)+\eta),
\end{equation*}
and minimize it over $x \in \Delta^g(0)$, thus obtaining
\begin{multline}
\label{subaddspeed}
\hata_\lambda(0,nx,ns_1,ns_2)  \le \hata_\lambda\big(0,\rho nx,n s_1', n s_2'\big)\\
 + \hata_\lambda\big(\rho n x, nx, n s_1'', n s_2'' - |\Delta^g(\rho n x)|\big) + v_\lambda(\rho n x).
\end{multline}
Using part (1) of Proposition~\ref{alllambda}, we readily know that
\begin{equation}
\label{ingred1}
\lim_{n \to + \infty} \frac{v_\lambda(\rho n x)}{n} = 0.	
\end{equation}
The next two steps are devoted to the analysis of the first two summands in the r.h.s.\ of (\ref{subaddspeed}). 

\noindent \emph{Step 4.} Let $\lambda_1 < \lambda$ be such that $\alpha'_{\lambda_1}(x)$ exists and satisfies $\alpha'_{\lambda_1}(x) - \alpha'_{\lambda-}(x) \in [0,\eta]$. Arguing as in step 1, one can see that
$$
\limsup_{n \to + \infty} \frac1n \hata_\lambda\big(0,\rho nx,n s_1', n s_2'\big) \le \limsup_{n \to + \infty} \frac1n  \hata_{\lambda_1}\big(0,\rho nx,n s_1', n s_2'\big) + s_2' (\lambda - \lambda_1).
$$
Due to our choice of $\lambda_1$, one has indeed $\alpha'_{\lambda_1}(\rho x) \in (s_1',s_2')$, so one can apply the result obtained in step 1 and get
$$
\limsup_{n \to + \infty} \frac1n \hata_\lambda\big(0,\rho nx,\rho n s_1, \rho n s_2\big) \le  \alpha_{\lambda_1}(\rho x) + s_2' (\lambda - \lambda_1) \le \rho \alpha_{\lambda}(x) + \rho s_2' (\lambda - \lambda_1).
$$
This identity being valid for values of $\lambda_1$ that can be made arbitrarily close to $\lambda$, we have shown that
\begin{equation}
\label{ingred2}
\limsup_{n \to + \infty} \frac1n \hata_\lambda\big(0,\rho nx,\rho n s_1, \rho n s_2\big) \le \rho \alpha_{\lambda}(x).	
\end{equation}

\noindent \emph{Step 5.} We now turn to the second summand in the r.h.s.\ of (\ref{subaddspeed}). Using Proposition~\ref{tailradius} (and recalling that $|\Delta^g(z)| \le (3N/2+1)^d |\ovmclC_{i(z)}|$) , one can see that there exists $C > 0$ such that with probability one, for all but a finite number of $z \in \Z^d$, one has
$$
|\Delta^g(z)| \le C \ll(\log(\|z\|_1)\rr)^{d/(d-1)}.
$$
This implies in particular that
\begin{multline}
\label{ingred30}
\limsup_{n \to + \infty} \frac1n \hata_\lambda\big(\rho n x, nx, n s_1'', n s_2'' - |\Delta^g(\rho n x)|\big) \\
 \le \limsup_{n \to + \infty} \frac1n \hata_\lambda\big(\rho n x, nx, n s_1'', n s_2'''\big) ,
\end{multline}
where $s_2''' = (1-\rho) \alpha'_{\lambda+}(x) < s_2''$.
Let $\lambda_2 > \lambda$ be such that $\alpha'_{\lambda_2}(x)$ exists and satisfies $\alpha'_{\lambda_2}(x) - \alpha'_{\lambda+}(x) \in [-\eta,0]$. The r.h.s.\ of (\ref{ingred30}) is smaller than
$$
\limsup_{n \to + \infty} \frac1n \hata_{\lambda_2}\big(\rho n x, nx, n s_1'', n s_2'''\big).
$$
Note that $\alpha'_{\lambda_2}((1-\rho)x) \in (s_1'',s_2''')$. Reasoning as in step 1 with the help of part~(3) of Proposition~\ref{alllambda}, one obtains that this limsup is smaller than $(1-\rho) \alpha_{\lambda_2}(x)$. But this holds for values of $\lambda_2$ that can be arbitrarily close to $\lambda$, so we have shown that
\begin{equation}
\label{ingred3}
\limsup_{n \to + \infty} \frac1n \hata_\lambda\big(\rho n x, nx, n s_1'', n s_2'' - |\Delta^g(\rho n x)|\big) \le (1-\rho) \alpha_{\lambda+}(x).
\end{equation}

\noindent \emph{Step 6.} The conclusion for $\hata_\lambda$ now follows by combining inequalities (\ref{subaddspeed}), (\ref{ingred1}), (\ref{ingred2}) and (\ref{ingred3}). To see that (\ref{eq:velocity}) is also true when $\hata_\lambda(0,nx,ns_1,ns_2)$ is replaced by $\td{a}_\lambda(x',nx,ns_1,ns_2)$ for $x' \in \Delta^g(0)$, it suffices to note that, due to inequality (\ref{p:triangle}), one has
$$
\hata_\lambda(0,nx,ns_1,ns_2) \le \td{a}_\lambda(x',nx,ns_1,ns_2) \le \hata_\lambda(0,nx,ns_1,ns_2) + v_\lambda(0).
$$
\end{proof}

We are now equipped to tackle the purpose of this section, that is, to prove the upper bound part of the large deviation principle.
\begin{proof}[Proof of (\ref{ldopen}) in Theorem~\ref{t:ldp}]
In view of part (3) of Proposition~\ref{ratefunction}, it suffices to show that on the event $0 \in \mathbf{C}_\infty$, for every $x \in \Q^d$ and every $r \in \Q$, $r > 0$,
\begin{equation}
\label{ldopenrational}
\limsup_{n \to + \infty} \ -\frac1n \log \PP_{n,V}[S_n \in n D(x,r)] \le I(x).
\end{equation}
\noindent \emph{Step 1.} Assuming that $\lambda > 0$ is such that $\alpha'_{\lambda-}(x) < 1$, we show that, on the event $0 \in \mathbf{C}_\infty$,
\begin{equation}
\label{conclstep1}
\limsup_{n \to + \infty} \ -\frac1n \log \PP_{n,V}[S_n \in n D(x,r)] \le \alpha_\lambda(x) - \lambda.	
\end{equation}
As $Z_{n,V} \le 1$,  we have for any $\lambda \ge 0$,
\begin{eqnarray*}
\PP_{n,V}[S_n \in n D(x,r)] & \ge & \EE_0\ll[ \exp\ll(-\sum_{k = 0}^{n-1} V(S_k) \rr) , S_n \in nD(x,r) \rr] \notag \\
& \ge & e^{\lambda n} \ \EE_0\ll[ \exp\ll(-\sum_{k = 0}^{n-1} (\lambda + V)(S_k) \rr) , S_n \in nD(x,r) \rr] ,
\end{eqnarray*}
hence
\begin{multline*}
\limsup_{n \to + \infty} \ -\frac1n \log \PP_{n,V}[S_n \in n D(x,r)] \\
\le \limsup_{n \to + \infty} \ - \frac1n \log \EE_0\ll[ \exp\ll(-\sum_{k = 0}^{n-1} (\lambda + V)(S_k) \rr) , S_n \in nD(x,r) \rr] - \lambda.
\end{multline*}
In order to prove that (\ref{conclstep1}) holds, it thus suffices to see that, on the event $0 \in \mathbf{C}_\infty$,
\begin{equation}
\label{ldopen1}
\limsup_{n \to + \infty} \ - \frac1n \log \EE_0\ll[ \exp\ll(-\sum_{k = 0}^{n-1} V_\lambda(S_k) \rr) , S_n \in nD(x,r) \rr] \le \alpha_\lambda(x).
\end{equation}
On the event $0 \in \mathbf{C}_\infty$, there exists a livable simple path $\gamma$ connecting $0$ to a point $x' \in \Delta^g(0)$. Restricting the expectation below on the event ``$S = \gamma$'' and using the Markov property, we obtain
\begin{multline*}
\EE_0\ll[ \exp\ll(-\sum_{k = 0}^{n-1} V_\lambda(S_k) \rr) , S_n \in nD(x,r) \rr] \ge \exp\ll(-\sum_{z \in \gamma \setminus \{x'\}} (V_\lambda(z)+\log(2d)) \rr)\\ 
\EE_{x'}\ll[ \exp\ll(-\sum_{k = 0}^{n-|\gamma|-1} V_\lambda(S_k) \rr) , S_{n-|\gamma|} \in nD(x,r) \rr].
\end{multline*}
The first term in the r.h.s.\ above does not depend on $n$, so is irrelevant for our asymptotic analysis. For the second term, for $n$ sufficiently large, it is larger than
\begin{equation}
\label{step1.1}
\EE_{x'}\ll[ \exp\ll(-\sum_{k = 0}^{sn-1} V_\lambda(S_k) \rr) , S_{n-|\gamma|} \in nD(x,r) \rr],
\end{equation}
where $s$ is such that $0 \le \alpha'_{\lambda-}(x) < s < 1$. 

For every $y \in \Z^d$ for which it exists, let $\gamma_y$ be the shortest healthy path connecting $y$ to an $(\eps,R)$-clearing, with some deterministic tie-breaking rule, and let $c(y)$ be the clearing that is reached by $\gamma_y$. Proposition~\ref{clearings} ensures that
\begin{equation}
\label{ldopenallbutafew}
\begin{array}{l}
\text{for all but a finite number of }n\text{'s, one has} \\
\forall y \in \Delta^g(nx), \quad \gamma_y \cup B(c(y),R) \subset n D(x,r).
\end{array}	
\end{equation}
For trajectories with a starting point $S_0$ such that $\gamma_{S_0}$ is well defined, and for any $t \ge 0$, let $\mcl{E}_{t}$ be the event
$$
S = \gamma_{S_0} \text{ and } \forall k \text{ s.t. } |\gamma_{S_0}|\le k \le t, S_k \in B(c(S_0),R).
$$ 
In words, the event $\mcl{E}_t$ requires the walk to travel through the shortest possible healthy path to a clearing, and the stay within distance $R$ from that clearing up to time $n$.

Due to (\ref{ldopenallbutafew}), except possibly for a finite number of $n$'s, one has 
$$
\1_{\{S_{n-|\gamma|} \in nD(x,r)\}} \ge \1_{\{\tau_g(nx) \le s n\}} \ll(\1_{\mcl{E}_{n}} \circ \Theta_{\tau_g(nx)}\rr).
$$
That is to say, in order for the walk to be in $nD(x,r)$ at time $n - |\gamma|$, it suffices to reach a point of $\Delta^g(nx)$ before time $sn$ and from there, spend $n$ more steps going to the nearest reachable clearing and staying within distance $R$ from that clearing. We can use this observation and the Markov property to lower-bound the expectation in (\ref{step1.1}) by
$$
\EE_{x'}\ll[\exp\ll(-\sum_{k = 0}^{sn-1} V_\lambda(S_k) \rr) \1_{\{\tau_g(nx) \le s n\}} \PP_{S_{\tau_g(nx)}}[\mcl{E}_n] \rr].
$$
For any $z$ such that $\gamma_z$ is well defined, one has
\begin{eqnarray*}
\PP_{z}[\mcl{E}_n] & \ge & \exp\ll(-\sum_{z' \in \gamma_z \setminus \{c(z)\}} (V_\lambda(z') + \log(2d))\rr) e^{-\eps n} \ \PP_0[\forall k \le n, S_k \in B(0,R)] \\
& \ge & \exp\big(-(M+\lambda + \log(2d))|\gamma_z|\big) e^{-\eps n} \ \PP_0[\forall k \le n, S_k \in B(0,R)],
\end{eqnarray*}
where we used the fact that the path $\gamma_z$ visits only healthy sites. Using also Proposition~\ref{clearings} to bound the length of $\gamma_z$, and (\ref{restela}), we obtain that
$$
\limsup \ - \frac1n \min_{z \in \Delta^g(nx)}\PP_{z}[\mcl{E}_n] \le \eps - \frac{1}{R} \log \PP_0[S_R = 0].
$$
Since the probability $\PP_0[S_R = 0]$ decays only polynomially with $R$, and $\eps,R$ are arbitrary, claim (\ref{ldopen1}) (and thus also (\ref{conclstep1})) will be proved if we show that
\begin{equation}
\label{ldopen2}
\limsup_{n \to +\infty} -\frac1n \log \EE_{x'}\ll[\exp\ll(-\sum_{k = 0}^{sn-1} V_\lambda(S_k) \rr) \1_{\{\tau_g(nx) \le s n\}} \rr] \le \alpha_\lambda(x).
\end{equation}
The l.h.s.\ of (\ref{ldopen2}) is precisely
$$
\limsup_{n \to + \infty} \frac1n \td{a}(x',nx,0,sn).
$$
Since we assume $\lambda > 0$ and $\alpha'_{\lambda-}(x) < s$, Proposition~\ref{velocity} applies with $s_1 = 0$ and $s_2 = s$, and proves the announced claim.

\noindent \emph{Step 2.} 
If $\alpha'_{\lambda-}(x) < 1$ for any $\lambda > 0$, then $I(x) = \alpha_0(x)$, and the result is obtained applying (\ref{conclstep1}) along a sequence of $\lambda$'s tending to $0$. Otherwise, let 
$$
\lambda_0(x) = \sup \{\lambda > 0 :  \alpha'_{\lambda-} (x) \ge 1\} \in (0,+\infty].
$$
If $\lambda_0(x)$ is finite, then $I(x) = \alpha_{\lambda_0(x)}(x) - \lambda_0(x)$, and similarly, one obtains the results applying (\ref{conclstep1}) on a sequence of $\lambda$'s approaching $\lambda_0(x)$ from above. On the other hand, if 
\begin{equation}
\label{lambda0lim}
\lim_{\lambda \to +\infty} \alpha'_{\lambda-}(x) > 1,
\end{equation}
then one can easily check that $I(x) = +\infty$, so there is nothing to prove. It may well be however that $\lambda_0(x)$ is infinite while (\ref{lambda0lim}) fails to hold, in which case the l.h.s.\ of (\ref{lambda0lim}) is equal to $1$. Let $(\mu_k)_{k \in \N}$ be a sequence such that $\mu_k < 1$ and $\lim_{k \to \infty} \mu_k = 1$. For large enough $k$, one has
\begin{equation}
\label{comparerrsur2}
\PP_{n,V}[S_n \in n D(x,r)] \ge \PP_{n,V}[S_n \in n D(\mu_k x,r/2)].	
\end{equation}
Besides, the homogeneity of the norm $\alpha_\lambda$ guarantees that
$$
\lim_{\lambda \to +\infty} \alpha'_{\lambda-}(\mu_k x) = \mu_k < 1,
$$
so $\lambda_0(\mu_k x)$ is finite, in which case we know that
$$
\limsup_{n \to +\infty}\ -\frac1n \PP_{n,V}[S_n \in n D(\mu_k x,r/2)] \le I(\mu_k x).
$$
From the argument we used to prove part (3) of Proposition~\ref{ratefunction} (see (\ref{limhull})), we also learn that 
\begin{equation*}
I(x) = \lim_{k \to +\infty} I(\mu_k x),
\end{equation*}
and the results follows using (\ref{comparerrsur2}).
\end{proof}

%
%
%
%
%
%
%
%
\section{First-passage percolation}
\label{s:firstpass}
\setcounter{equation}{0}
In this last section, we briefly discuss corresponding results in the context of first-passage percolation. With randomness attached to sites, this means, instead of (\ref{defa}), to define $a(x,y)$ as
$$
a(x,y) = \inf \left\{ \sum_{z \in \gamma \setminus \{y\}} V(z) \ | \ \gamma \text{ n.n. path linking } x \text{ to } y \right\}.
$$
This corresponds to the zero-temperature limit of the model we have been considering, see \cite[Proposition~9]{zer}. 

In this case, Theorem~\ref{t:existpoint} and its proof are unchanged, except that in some cases, $\alpha$ may fail to be a norm, for there may exist $x \neq 0$ such that $\alpha(x) = 0$. Since $\alpha$ inherits the symmetries of the distribution of the potential, it can be seen that this implies $\alpha = 0$. Using classical percolation estimates, one can see that $\alpha = 0$ when $\P[V(0) = 0] > p_c$, and is non-zero when $\P[V(0) = 0] < p_c$. It should be possible to adapt the arguments of \cite{ck} to prove that, as a function of the distribution of $V(0)$, the semi-norm $\alpha$ is continuous on the open set of distributions such that $\P[V(0) = +\infty] < p_c$. This would ensure that $\alpha = 0$ if and only if $\P[V(0) = 0] \ge p_c$.

Provided $\alpha$ is indeed a norm, Theorems~\ref{t:shape} and \ref{t:existplane} can be kept unchanged. When $\alpha = 0$, equation (\ref{eq:shapemoment}) should be changed to
$$
\forall R > 0, \quad \P[D(0,R) \subset  t^{-1} A_t^\circ \text{ for all } t \text{ large enough}] = 1,
$$
equation (\ref{shapemeas}) to
$$
\forall R > 0, \quad |D(0,R) \setminus (t^{-1} A_t^\circ)  | \xrightarrow[t \to + \infty]{\text{a.s.}} 0,
$$
and equation (\ref{eq:pth2}) to
$$
\forall R > 0, \quad |D(0,R) \setminus (t^{-1} A_t^{\e_t})  | \xrightarrow[t \to + \infty]{\text{a.s.}} 0,
$$
while equation (\ref{eq:pth1}) is left without counterpart. In Theorem~\ref{t:existplane}, the limit in the r.h.s.\ of (\ref{eq:pth}) should be $0$.

Usually, first-passage percolation is defined with randomness attached to the edges of $\Z^d$. In this case, as was already mentioned, versions similar to Theorem~\ref{t:existpoint}, parts (1-2) of Theorem~\ref{t:shape} and Theorem~\ref{t:existplane} were obtained in \cite{cd,kes} (to be precise, \cite{cd} proved the results for $d = 2$, while \cite{kes} considered any $d \ge 2$, but for the purpose of the lecture notes, assumed finite second moment of the passage times in his proof of almost sure convergence of the point-to-point exponent). The only difference appearing when randomness is attached to edges is that conditions for a.s.\ and $L^1$ convergence of the point-to-point exponent become the same, and that the approximate $a_m(x,y)$ we introduced is no longer necessary. Part (3) of Theorem~\ref{t:shape} and Theorem~\ref{t:existplane} remain valid in this context (provided $\alpha$ is a norm, otherwise the statements should be understood as discussed above).

\medskip

\noindent \textbf{Acknowledgments.} I would like to thank the two referees for their careful reviews, and in particular for pointing out an error in the initial proof of Proposition~\ref{controltailH}.

\end{document}